\theoremstyle{plain}
\newtheorem{theorem}{Theorem}
\newtheorem{lemma}{Lemma}
\newtheorem{proposition}{Proposition}
\theoremstyle{definition}
\newtheorem{definition}{Definition}
\theoremstyle{remark}
\newtheorem{remark}{Remark}
\DeclareMathOperator{\dist}{dist}
\DeclareMathOperator{\dom}{dom}
\DeclareMathOperator*{\esssup}{ess\,sup}
\DeclareMathOperator*{\argmin}{arg\,min}
\author{M.V. Dolgopolik and A.V. Fominyh}
\title{Exact Penalty Functions for Optimal Control Problems I: Main Theorem and Free-Endpoint Problems}
\begin{document}

\maketitle

\begin{abstract}
In this two-part study we develop a general approach to the design and analysis of exact penalty
functions for various optimal control problems, including problems with terminal and state constraints, problems
involving differential inclusions, as well as optimal control problems for linear evolution equations. This approach
allows one to simplify an optimal control problem by removing some (or all) constraints of this problem with the use of
an exact penalty function, thus allowing one to reduce optimal control problems to equivalent variational problems,
apply numerical methods for solving, e.g. problems without state constraints, to problems including such constraints,
etc.

In the first part of our study we strengthen some existing results on exact penalty functions for optimisation problems
in infinite dimensional spaces and utilise them to study exact penalty functions for free-endpoint optimal control
problems, which reduce these problems to equivalent variational ones. We also prove several auxiliary results on
integral functionals and Nemytskii operators that are helpful for verifying the assumptions under which the proposed
penalty functions are exact.
\end{abstract}

\section{Introduction}

The idea of using so-called exact penalty functions for solving constrained optimisation problems was suggested
practically simultaneously by Eremin \cite{Eremin} and Zangwill \cite{Zangwill} in the 1960s. Since then, exact penalty
functions have been extensively studied and applied to various optimisation problems by many researchers (see, e.g.
\cite{EvansGouldTolle,HanMangasarian,DiPilloGrippo88,DiPilloGrippo89,ExactBarrierFunc,Dolgopolik_ExPen_I} and the
references therein). 

The main idea behind the exact penalty approach consists in replacing a constrained optimisation problem, say
$$
  \min_{x \in \mathbb{R}^d} \: f(x) \quad \text{subject to} \quad g_i(x) \le 0, \quad i \in \{ 1, 2, \ldots, m \},
$$
by the unconstrained problem of minimising the nonsmooth penalty function:
$$
  \min_{x \in \mathbb{R}^d} \Phi_{\lambda}(x) = f(x) + \lambda \sum_{i = 1}^m \max\{ 0, g_i(x) \}.
$$
Under some natural assumptions this penalised problem is equivalent to the original one in the sense that these
problems have the same optimal value and the same locally/globally optimal solutions, provided the penalty parameter
$\lambda$ is sufficiently large (but finite). Thus, the exact penalty approach allows one to reduce constrained
optimisation problems to equivalent unconstrained ones and apply numerical methods of unconstrained optimisation to
constrained problems. However, most papers on the theory and applications of exact penalty functions deal only with the
finite dimensional case or a local analysis of an exact penalty function. 

In the infinite dimensional case, globally exact penalty functions were probably first studied by Demyanov et al. for 
a problem of finding optimal parameters in a system described by ordinary differential equations
\cite{DemyanovKarelin98}, free-endpoint optimal control problems
\cite{DemyanovKarelin2000_InCollect,DemyanovKarelin2000,Karelin}, the simplest problem of the calculus of variations
\cite{Demyanov2004,Demyanov2005}, and variational problems with nonholonomic inequality constraints
\cite{Demyanov2003,DemyanovGiannessi2003}. The main results of these papers were further extended to isoperimetric
problems of the calculus of variations \cite{DemTam2011}, variational problems involving higher order derivatives
\cite{Tamasyan2013}, parametric moving boundary variational problems \cite{DemTam2014}, control problems involving
differential inclusions \cite{FominyhKarelin2015}, and certain optimal control problems for implicit control systems
with strict inequality constraints \cite{DemyanovTamasyan2005}. Numerical methods for solving optimal control problems
based on the use of exact penalty functions in the infinite dimensional setting were probably first considered by
Outrata \cite{Outrata83} (see also \cite{OutrataSchindler,Outrata88}), and later on were also studied in
\cite{FominyhKarelin2018}. However, in \cite{Outrata83} only the local exactness of a penalty function was considered
under the assumption that an abstract constraint qualification holds true, and it is unclear how to verify this
assumption for any particular problem. In \cite{DemyanovKarelin2000,FominyhKarelin2018}, the global exactness of penalty
functions was stated without proof. The main results on exact penalty functions for various variational problems from
\cite{Demyanov2003,DemyanovGiannessi2003,Demyanov2004,Demyanov2005,DemTam2011,Tamasyan2013,DemTam2014} are based on the
assumptions that the objective function is Lipschitz continuous on a rather complicated and possibly unbounded set, and
a penalty function attains a global minimum in the space of piecewise continuous functions for any sufficiently large
value of the penalty parameter, and it is, once again, unclear how to verify these assumptions in any particular case.
The same remark is true for the main results of the papers 
\cite{DemyanovKarelin98,DemyanovKarelin2000_InCollect,DemyanovKarelin2000,Karelin,DemyanovTamasyan2005,
FominyhKarelin2015} devoted to exact penalty functions for optimal control problems. To the best of authors' knowledge,
the only verifiable sufficient conditions for the global exactness of an exact penalty function in the infinite
dimensional setting were obtained by Gugat and Zuazua in \cite{Zuazua}, where the exact penalisation of the terminal
constraint for optimal control problems involving linear evolution equations was considered.

The main goal of our study is to develop a general theory of exact penalty functions for optimal control problems that
contains verifiable sufficient conditions for the global/complete exactness of penalty functions. To this end, in the
first paper we strengthen some existing results on exact penalty functions for optimisation problems in infinite
dimensional spaces and apply them to free-endpoint problems. We also obtain a number of auxiliary results that are
helpful for verifying the exactness of penalty functions for optimal control problems in particular cases. For instance,
we provide simple sufficient conditions for the Lipschitz continuity of integral functionals, the boundedness of
sublevel sets of penalty functions, the existence of global minimisers, etc. Thus, in this paper we obtain first simple
and verifiable sufficient conditions for the global exactness of penalty functions for nonlinear optimal control
problems, which allow one to reduce free-endpoint optimal control problems to equivalent variational problems. In the
second paper we apply our general results on exact penalty functions to optimal control problems with terminal and
pointwise state constraints, including such problems for linear evolution equations in Hilbert spaces. 

Let us point out that in our study we consider only so-called simple linear penalty functions, i.e. such penalty
functions that depend linearly on the objective function and do not depend on derivatives of the objective function or
constraints. Such exact penalty functions are inherently nonsmooth (see, e.g.~Remark~3 in \cite{Dolgopolik_ExPen_I} and
Sect.~2.3 in \cite{Zuazua}), and one has to utilise a well-developed apparatus of nonsmooth optimisation to minimise
them. In particular, one can apply such popular and efficient modern methods of nonsmooth optimisation as bundle methods
\cite{Makela2002,HaaralaMiettinenMakela,HareSagatizabal,FuduliGaudioso}, 
gradient sampling methods \cite{BurkeLewisOverton,Kiwiel2010,CurtisQue2013}, 
nonsmooth quasi-Newton methods \cite{LewisOverton2013,KeskarWachter}, 
discrete gradient method \cite{BarigovKarasozenSezer} 
(see also \cite{KarmitsaBagriovMakela2012,BagirovKarmitsaMakela_book}), etc. Alternatively, one can utilise smoothing
approximations of nonsmooth penalty functions as in \cite{Pinar,Liu,LiuzziLucidi,Lian,Dolgopolik_ExPen_II}
or the smooth penalty function proposed by Huyer and Neumaier \cite{HuyerNeumaier}. This penalty function was analysed
in detail in \cite{WangMaZhou,Dolgopolik_ExPen_II} and applied to discretised optimal control problems in 
\cite{LiYu2011,JiangLin2012,LinLoxton2014}. In \cite{Dolgopolik_SmoothExPen} it was shown that Huyer and Neumaier's
penalty function is exact if and only if a corresponding standard nonsmooth penalty function is exact. With the use of
this result and the main results of our two-part study one can easily verify the global exactness of Huyer and
Neumaier's penalty function for various optimal control problems without discretisation.

The paper is organised as follows. Some general results on exact penalty functions for optimisation problems in infinite
dimensional spaces are presented in Section~\ref{Section_ExactPenaltyFunctions}. In particular, in this section we
formulate ``the Main Theorem'' (Theorem~\ref{THEOREM_COMPLETEEXACTNESS}), which is the main tool for proving the
global/complete exactness of penalty functions for optimal control problems. We extensively utilise this theorem
throughout both parts of our study. In Section~\ref{Section_FreeEndpoint}, we study an exact penalty function for
free-endpoint optimal control problems, while in Section~\ref{Section_DiffIncl} these results are extended to the case
of free-endpoint variational problems involving differential inclusions. Finally, a proof of the main theorem, as
well as a number of auxiliary results on integral functionals and Nemytskii operators that are helpful for verifying 
the assumptions of the main theorem in the case of optimal control problems, are given in
Appendices~A and B respectively.

\section{Exact Penalty Functions in Metric Spaces}
\label{Section_ExactPenaltyFunctions}

In this section we present some general results on exact penalty functions for optimisation problems in metric spaces
that are utilised throughout the paper. Let $(X, d)$ be a metric space, $M, A \subset X$ be nonempty sets such that
$M \cap A \ne \emptyset$, and $\mathcal{I} \colon X \to \mathbb{R} \cup \{ + \infty \}$ be a given function. Denote by
$\dom \mathcal{I} = \{ x \in X \mid \mathcal{I}(x) < + \infty \}$ the effective domain of $\mathcal{I}$. 

Consider the following optimisation problem:
$$
  \min \: \mathcal{I}(x) \quad \text{subject to} \quad x \in M \cap A.	\eqno{(\mathcal{P})}
$$
Here the sets $M$ and $A$ correspond to two different types of constraints of the optimisation problem. In particular,
it can be equality/inequality constraints or linear/nonlinear constraints or terminal/pointwise constraints.
Denote by $\Omega = M \cap A$ the feasible region of $(\mathcal{P})$. Hereinafter, we suppose that there exists a
globally optimal solution $x^* \in \dom \mathcal{I}$ of the problem $(\mathcal{P})$, i.e. $\mathcal{I}$ attains a global
minimum on $\Omega$, and the optimal value is finite. 

Let a function $\varphi \colon X \to [0, + \infty]$ be such that $\varphi(x) = 0$ iff $x \in M$. For any
$\lambda \ge 0$ introduce the function $\Phi_{\lambda}(x) = \mathcal{I}(x) + \lambda \varphi(x)$. This function is
called \textit{a penalty function} for the problem $(\mathcal{P})$, $\lambda$ is called \textit{a penalty parameter},
and $\varphi$ is called \textit{a penalty term} for the constraint $x \in M$. Note that the function $\Phi_{\lambda}(x)$
is non-decreasing in $\lambda$, $\Phi_{\lambda}(x) \ge \mathcal{I}(x)$ for all $x \in X$, and 
$\Phi_{\lambda}(x) = \mathcal{I}(x)$ for any $x$ satisfying the constraint $x \in M$. Therefore, it is natural to
consider the penalised problem
\begin{equation} \label{PenalizedProblem}
  \min \: \Phi_{\lambda}(x) \quad \text{subject to} \quad x \in A.
\end{equation}
Observe that only the constraint $x \in M$ is penalised, i.e. only this constraint is incorporated into the penalty
function $\Phi_{\lambda}(x)$. This approach allows one to choose which constraints of an optimisation problem are to be
``removed'' via the exact penalty function technique in order to simplify the problem under consideration.

We would like to know when the penalised problem \eqref{PenalizedProblem} is, in some sense, equivalent to 
the original problem $(\mathcal{P})$, i.e. when the penalisation does not distort information about minimisers of
the problem $(\mathcal{P})$.

\begin{definition}
The penalty function $\Phi_{\lambda}$ is called (globally) \textit{exact}, if there exists $\lambda^* \ge 0$ such that
for any $\lambda \ge \lambda^*$ the set of globally optimal solutions of the penalised problem \eqref{PenalizedProblem}
coincides with the set of globally optimal solutions of the problem $(\mathcal{P})$. The greatest lower bound of all
such $\lambda^*$ is denoted by $\lambda^*(\mathcal{I}, \varphi, A)$ and is called \textit{the least exact penalty
parameter} of the penalty function $\Phi_{\lambda}$.
\end{definition}

One can easily verify (see~\cite[Corollary~3.3]{Dolgopolik_ExPen_I}) that the penalty function $\Phi_{\lambda}$ is exact
iff there exists $\lambda \ge 0$ such that $\inf_{x \in A} \Phi_{\lambda}(x) = \inf_{x \in \Omega} \mathcal{I}(x)$, i.e.
iff the optimal values of the problems $(\mathcal{P})$ and \eqref{PenalizedProblem} coincide. Furthermore, the greatest
lower bound of all such $\lambda$ coincides with the least exact penalty parameter.

Thus, if the penalty function $\Phi_{\lambda}$ is globally exact, then for any $\lambda \ge 0$ large enough 
the penalised problem \eqref{PenalizedProblem} is equivalent to the original problem $(\mathcal{P})$ in the sense that
it has the same optimal value and the same globally optimal solutions.

Let us provide simple sufficient conditions for the global exactness of the penalty function $\Phi_{\lambda}$. To this
end, we need to recall the definition of \textit{the rate of steepest descent} of a function defined on a metric space 
\cite{Demyanov2000,Demyanov2010,Uderzo}. Let $g \colon X \to \mathbb{R} \cup \{ + \infty \}$ be a given function, 
$K \subset X$ be a nonempty set, and $x \in K$ be such that $g(x) < + \infty$. The quantity
$$
  g^{\downarrow}_K(x) = \liminf_{y \in K, y \to x} \frac{g(y) - g(x)}{d(y, x)}
$$
is called \textit{the rate of steepest descent} of the function $g$ with respect to the set $K$ at the point $x$ (if
$x$ is an isolated point of $K$, then $g^{\downarrow}_K(x) = + \infty$ by definition). In the case $K = X$ we denote
$g^{\downarrow}(x) = g^{\downarrow}_X(x)$. Let us note that the rate of steepest descent of the function $g$ at $x$ is
closely connected to the so-called strong slope $|\nabla g|(x)$ of $g$ at $x$ \cite{Aze,Kruger}. See
\cite{Aze,Kruger,Dolgopolik_ExPen_II} for some calculus rules for strong slope/rate of steepest descent, and the ways
one can estimate them in various particular cases. Here we only note that if $X$ is a normed space, and $g$ is
Fr\'{e}chet differentiable at a point $x \in X$, then $g^{\downarrow}(x) = - \| g'(x) \|_{X^*}$, where $g'(x)$ is the
Fr\'{e}chet derivative of $g$ at $x$, and $\| \cdot \|_{X^*}$ is the standard norm in the topological dual space $X^*$.
If $g$ is G\^{a}teaux differentiable at $x$, then $g^{\downarrow}(x) \le - \| g'(x) \|_{X^*}$, where $g'(x)$ is the
G\^{a}teaux derivative of $g$ at $x$. Finally, if $g$ is merely directionally differentiable at $x$, then
\begin{equation} \label{RSD_via_DirectDerivative}
  g^{\downarrow}(x) \le \inf_{\| v \| = 1} g'(x, v), \text{ where } 
  g'(x, v) = \lim_{\alpha \to + 0} \frac{g(x + \alpha v) - g(x)}{\alpha}.
\end{equation}
The following theorem, which is a particular case of \cite[Theorem~3.6]{Dolgopolik_ExPen_II}, contains simple sufficient
conditions for the global exactness of the penalty function $\Phi_{\lambda}(x)$. For any $\delta > 0$
define $\Omega_{\delta} = \{ x \in A \mid \varphi(x) < \delta \}$. 

\begin{theorem} \label{Theorem_GlobalExactness}
Let $X$ be a complete metric space, $A$ be closed, $\mathcal{I}$ and $\varphi$ be lower semi-continuous (l.s.c.) on $X$.
Suppose also that there exist a feasible point $x_0 \in X$, $\lambda_0 > 0$ and $\delta > 0$ such that
\begin{enumerate}
\item{the function $\mathcal{I}$ is Lipschitz continuous on an open set containing the set
$C(\delta, \lambda_0) = \{ x \in \Omega_{\delta} \mid \Phi_{\lambda_0}(x) < \mathcal{I}(x_0) \}$;
}

\item{there exists $a > 0$ such that $\varphi^{\downarrow}_A(x) \le - a$ for all 
$x \in C(\delta, \lambda_0) \setminus \Omega$.
}
\end{enumerate}
Then the penalty function $\Phi_{\lambda}$ is globally exact if and only if it is bounded below on $A$ for some 
$\lambda \ge 0$.
\end{theorem}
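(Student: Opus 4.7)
The necessity is immediate: if $\Phi_\lambda$ is exact for some $\lambda$, then $\inf_A \Phi_\lambda = \inf_\Omega \mathcal{I} = \mathcal{I}(x^*)$ is finite, so $\Phi_\lambda$ is bounded below on $A$. The substance is the converse. My plan is the standard Ekeland-variational-principle argument, combining Lipschitz continuity of $\mathcal{I}$ with the descent estimate on $\varphi$ to force a contradiction once the penalty parameter is large enough.

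I would first observe that the hypothesis ``bounded below for some $\lambda$'' is monotone upward: since $\varphi \ge 0$, $\Phi_{\lambda_1} \le \Phi_\lambda$ for $\lambda \ge \lambda_1$, so I can assume boundedness below for a single, freely chosen large $\lambda$. Let $L$ be the Lipschitz constant of $\mathcal{I}$ on the open set $U \supset C(\delta,\lambda_0)$, let $-M$ be a lower bound of $\Phi_{\lambda_1}$ on $A$ (for some $\lambda_1 \ge \lambda_0$), and pick any
\[
  \lambda > \max\!\Bigl\{\lambda_1, \; \tfrac{L}{a}, \; \lambda_1 + \tfrac{\mathcal{I}(x^*)+M}{\delta}\Bigr\}.
\]
By Corollary~3.3 it suffices to show $\inf_{x\in A}\Phi_\lambda(x) = \mathcal{I}(x^*)$. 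Since $\Omega \subset A$ and $\Phi_\lambda = \mathcal{I}$ on $\Omega$, one direction is trivial; I argue the reverse by contradiction, assuming $\inf_A \Phi_\lambda < \mathcal{I}(x^*)$.

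Applying Ekeland's variational principle on the complete metric space $A$ (closed in $X$) to the l.s.c., bounded-below function $\Phi_\lambda$, I would extract $x_n \in A$ with $\Phi_\lambda(x_n) \to \inf_A\Phi_\lambda < \mathcal{I}(x^*)$ and, for a sequence $\varepsilon_n \downarrow 0$,
\[
  \Phi_\lambda(x_n) \le \Phi_\lambda(y) + \varepsilon_n \, d(y,x_n) \quad \text{for all } y \in A.
\]
The key verification is $x_n \in C(\delta,\lambda_0)\setminus \Omega$ for large $n$. The inequality $\Phi_{\lambda_0}(x_n) \le \Phi_\lambda(x_n) < \mathcal{I}(x^*)$ uses $\lambda \ge \lambda_0$ and $\varphi \ge 0$; the bound $\varphi(x_n) < \delta$ follows from writing $(\lambda-\lambda_1)\varphi(x_n) = \Phi_\lambda(x_n) - \Phi_{\lambda_1}(x_n) \le \mathcal{I}(x^*) + M$ and the choice of $\lambda$; infeasibility $x_n \notin \Omega$ follows because $x_n \in \Omega$ would give $\mathcal{I}(x_n) = \Phi_\lambda(x_n) < \mathcal{I}(x^*)$, contradicting optimality of $x^*$. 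This bookkeeping (ensuring $\varphi(x_n) < \delta$) is the main technical obstacle — it is what forces the three simultaneous lower bounds on $\lambda$ above and is the reason we need boundedness below as a hypothesis rather than obtaining it for free.

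Once $x_n \in C(\delta,\lambda_0)\setminus\Omega$, the Ekeland inequality rearranges, for $y \in A \cap U$ near $x_n$, to
\[
  \lambda\bigl(\varphi(y)-\varphi(x_n)\bigr) \ge -\varepsilon_n d(y,x_n) - \bigl(\mathcal{I}(y)-\mathcal{I}(x_n)\bigr) \ge -(\varepsilon_n + L)\,d(y,x_n).
\]
Dividing by $d(y,x_n)$ and passing to $\liminf$ as $y\to x_n$ in $A$ yields $\varphi^{\downarrow}_A(x_n) \ge -(\varepsilon_n+L)/\lambda$. Combined with hypothesis~2, $\varphi^{\downarrow}_A(x_n) \le -a$, this gives $\lambda a \le L + \varepsilon_n$; sending $n\to\infty$ produces $\lambda \le L/a$, contradicting the choice of $\lambda$. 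Hence $\inf_A\Phi_\lambda = \mathcal{I}(x^*)$ and exactness follows.
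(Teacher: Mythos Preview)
Your argument is correct and follows the same Ekeland-variational-principle route that the paper employs (the paper does not prove this theorem separately, citing \cite[Theorem~3.6]{Dolgopolik_ExPen_II}, but its proof of part~1 of Theorem~\ref{THEOREM_COMPLETEEXACTNESS} in Appendix~A is the analogous argument). The only superficial difference is that the paper lets the penalty parameter $n\to\infty$ while holding the Ekeland constant fixed at~$1$, whereas you fix a single large $\lambda$ and send the Ekeland constant $\varepsilon_n\to 0$; the contradiction reached---Lipschitz constant of $\mathcal{I}$ versus the descent bound $a$ on $\varphi$---is the same.
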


\begin{remark}
If the assumptions of the theorem above are satisfied, but the penalty function $\Phi_{\lambda}$ is not bounded below,
one can consider the penalty function 
$$
  \Psi_{\lambda}(x) = \begin{cases}
    \mathcal{I}(x) + \lambda \dfrac{\varphi(x)}{\delta - \varphi(x)}, & \text{if } \varphi(x) < \delta, \\
    + \infty, & \text{otherwise.}
  \end{cases}
$$
One can check that under the assumptions of Theorem~\ref{Theorem_GlobalExactness} the penalty function 
$\Psi_{\lambda}$ is exact iff it is bounded below. In particular, $\Psi_{\lambda}$ is exact, provided
the function $\mathcal{I}$ is bounded below on $C(\delta, \lambda_0)$.
\end{remark}

As was noted above, if the penalty function $\Phi_{\lambda}$ is globally exact, then the penalised problem
\eqref{PenalizedProblem} is equivalent to the problem $(\mathcal{P})$ in the sense that it has the same optimal value
and the same \textit{globally} optimal solutions. However, optimisation methods often can find only local
minimisers or even only stationary (critical) points of an optimisation problem. That is why it is desirable to ensure
that local minimisers/stationary points of the penalty function $\Phi_{\lambda}$ coincide with locally optimal
solutions/stationary points of the problem $(\mathcal{P})$. Our aim is to show that this ``complete'' equivalence can be
achieved under assumptions that are very similar to the ones in Theorem~\ref{Theorem_GlobalExactness}. To this end, let
us recall a natural extension of the definition of stationary point to the case of functions defined on metric spaces
(see \cite{Demyanov2000,Demyanov2010}).

Let $g \colon X \to \mathbb{R} \cup \{ + \infty \}$ be a given function, and $K$ be a nonempty set. 
A point $x \in K \cap \dom g$ is called an \textit{inf-stationary} point of the function $g$ on the set $K$, if
$g^{\downarrow}_K(x) \ge 0$. In the case when $X$ is a normed space, $K$ is convex, and $g$ is Fr\'{e}chet
differentiable at $x$ this condition is reduced to the standard necessary optimality condition 
\begin{equation} \label{NessMinCond_OverConvexSet}
  g'(x)[y - x] \ge 0 \quad \forall y \in K.
\end{equation}
Let us also note that if $(\mathcal{P})$ is a mathematical programming problem with equality and inequality constraints,
and $\Phi_{\lambda}$ is the $\ell_1$ penalty function for this problem, then condition 
$\Phi_{\lambda}^{\downarrow}(x) \ge 0$ for some $\lambda > 0$ and a feasible point $x$ is satisfied iff KKT optimality
conditions hold true at $x$.

For any $\lambda \ge 0$ and $c \in \mathbb{R}$ denote $S_{\lambda}(c) = \{ x \in A \mid \Phi_{\lambda}(x) < c \}$. 

\begin{theorem} \label{THEOREM_COMPLETEEXACTNESS}
Let $X$ be a complete metric space, $A$ be closed, $\mathcal{I}$ and $\varphi$ be l.s.c. on $A$, and $\varphi$ be
continuous at every point of the set $\Omega$. Suppose also that there exist 
$c > \mathcal{I}^* = \inf_{x \in \Omega} \mathcal{I}(x)$, $\lambda_0 > 0$, and $\delta > 0$ such that
\begin{enumerate}
\item{$\mathcal{I}$ is Lipschitz continuous on an open set containing the set $S_{\lambda_0}(c) \cap \Omega_{\delta}$;
}

\item{there exists $a > 0$ such that $\varphi^{\downarrow}_A(x) \le - a$ for all 
$x \in S_{\lambda_0}(c) \cap (\Omega_{\delta} \setminus \Omega)$;
\label{NegativeDescentRateAssumpt}}

\item{$\Phi_{\lambda_0}$ is bounded below on $A$.
}
\end{enumerate}
Then there exists $\lambda^* \ge 0$ such that for any $\lambda \ge \lambda^*$ the following statements hold true:
\begin{enumerate}
\item{the optimal values of the problems $(\mathcal{P})$ and \eqref{PenalizedProblem} coincide;
}

\item{globally optimal solutions of the problems $(\mathcal{P})$ and \eqref{PenalizedProblem} coincide;
}

\item{$x^* \in S_{\lambda}(c)$ is a locally optimal solution of the penalised problem \eqref{PenalizedProblem} iff $x^*$
is a locally optimal solution of the problem $(\mathcal{P})$;
}

\item{$x^* \in S_{\lambda}(c)$ is an inf-stationary point of $\Phi_{\lambda}$ on $A$ iff $x^* \in \Omega$, and it is an
inf-stationary point of $\mathcal{I}$ on $\Omega$.
}
\end{enumerate}
\end{theorem}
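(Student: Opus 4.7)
The plan is to derive items~1 and~2 directly from Theorem~\ref{Theorem_GlobalExactness}, and to prove items~3 and~4 using two main tools: a subadditivity inequality for the rate of steepest descent under local Lipschitz continuity of $\mathcal{I}$, and a local error bound $\dist_A(y,\Omega)\le\varphi(y)/a$ for $y$ near $\Omega$, extracted from hypothesis~\ref{NegativeDescentRateAssumpt} via Ekeland's variational principle.

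\textbf{Items~1--2 and shrinking of sublevel sets.} Pick $x_0\in\Omega$ with $\mathcal{I}(x_0)<c$ (possible since $\mathcal{I}^*<c$). Then $C(\delta,\lambda_0)\subset S_{\lambda_0}(c)\cap\Omega_\delta$, so hypotheses~1--2 of Theorem~\ref{Theorem_GlobalExactness} are inherited from the present hypotheses~1--2. Together with hypothesis~3 (boundedness below of $\Phi_{\lambda_0}$), Theorem~\ref{Theorem_GlobalExactness} yields global exactness of $\Phi_\lambda$, i.e.\ items~1--2. In addition, if $\Phi_{\lambda_0}\ge-M$ on $A$, then $\varphi(x)\le(c+M)/(\lambda-\lambda_0)$ for every $x\in S_\lambda(c)$, so $S_\lambda(c)\subset S_{\lambda_0}(c)\cap\Omega_\delta$ for all sufficiently large $\lambda$.

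\textbf{The two key inequalities.} Let $L$ be a Lipschitz constant for $\mathcal{I}$ on an open neighbourhood of $S_{\lambda_0}(c)\cap\Omega_\delta$. Using $\mathcal{I}(y)-\mathcal{I}(x)\le L\,d(y,x)$ one gets, on this set, the subadditivity estimate $(\Phi_\lambda)^{\downarrow}_A(x)\le L+\lambda\,\varphi^{\downarrow}_A(x)$. Applying Ekeland's variational principle to the l.s.c., non-negative function $\varphi$ on the closed set $A$ at a point $y$ with $\varphi(y)$ small, with parameters $\varepsilon=\varphi(y)$ and $\mu$ slightly larger than $\varphi(y)/a$, produces $\tilde y\in A$ with $d(\tilde y,y)\le\mu$ and $\varphi^{\downarrow}_A(\tilde y)>-a$; hypothesis~\ref{NegativeDescentRateAssumpt} then forces $\tilde y\in\Omega$, giving the error bound $\dist_A(y,\Omega)\le\varphi(y)/a$ near $\Omega$. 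Fixing $\lambda>L/a$ large enough that $S_\lambda(c)\subset S_{\lambda_0}(c)\cap\Omega_\delta$, the subadditivity estimate combined with hypothesis~\ref{NegativeDescentRateAssumpt} yields $(\Phi_\lambda)^{\downarrow}_A(x)\le L-\lambda a<0$ on $S_\lambda(c)\setminus\Omega$. This immediately gives the ``only if'' halves of items~3--4: any local minimiser or inf-stationary point of $\Phi_\lambda$ on $A$ inside $S_\lambda(c)$ must lie in $\Omega$, and since $\Phi_\lambda\equiv\mathcal{I}$ on $\Omega\subset A$, the corresponding property for $\mathcal{I}$ on $\Omega$ follows immediately.

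\textbf{Converse halves and the main obstacle.} Given $x^*\in S_\lambda(c)\cap\Omega$ with the appropriate $\mathcal{I}$-property on $\Omega$, and any $y_n\to x^*$ in $A$, split into $y_n\in\Omega$ (where the $\mathcal{I}$-hypothesis is applied directly) and $y_n\notin\Omega$. In the second case, continuity of $\varphi$ at $x^*$ gives $\varphi(y_n)\to 0$, so the error bound yields $\tilde y_n\in\Omega$ with $d(\tilde y_n,y_n)\le(1/a+o(1))\varphi(y_n)$ and $\tilde y_n\to x^*$; expanding
$$
\Phi_\lambda(y_n)-\mathcal{I}(x^*)\ge\bigl[\mathcal{I}(\tilde y_n)-\mathcal{I}(x^*)\bigr]+\bigl[\lambda-L(1/a+o(1))\bigr]\varphi(y_n)
$$
and using the $\mathcal{I}$-hypothesis at $x^*$ on $\Omega$ together with $\lambda>L/a$ closes the argument. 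The most delicate point is precisely this converse: one must ensure that the drop of $\mathcal{I}$ incurred when projecting $y_n$ to $\tilde y_n$ is absorbed by the surplus $(\lambda-L/a)\varphi(y_n)$ uniformly in the auxiliary parameter $\varepsilon$ from the inf-stationarity estimate $\mathcal{I}(\tilde y_n)-\mathcal{I}(x^*)\ge-\varepsilon\,d(\tilde y_n,x^*)$, despite the a priori unrelated orders of magnitude of $\varphi(y_n)$ and $d(y_n,x^*)$. This is where all three hypotheses---Lipschitz continuity of $\mathcal{I}$, negative descent rate of $\varphi$, and boundedness below of $\Phi_{\lambda_0}$---interact simultaneously.
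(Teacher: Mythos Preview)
Your proposal is correct and follows essentially the same route as the paper: the local error bound $\dist(y,\Omega)\le\varphi(y)/a$ via Ekeland is precisely the paper's Lemma~\ref{Lemma_RetricSubregularity}, your projection-to-$\tilde y_n$ estimate is the content of Lemma~\ref{Lemma_LipschitzEstimate}, and the subadditivity bound $(\Phi_\lambda)^{\downarrow}_A\le L+\lambda\varphi^{\downarrow}_A$ giving the ``only if'' halves is exactly how the paper argues. The one difference is that for items~1--2 you invoke Theorem~\ref{Theorem_GlobalExactness} as a black box (noting $C(\delta,\lambda_0)\subset S_{\lambda_0}(c)\cap\Omega_\delta$), whereas the paper redoes the Ekeland argument from scratch to keep Appendix~A self-contained; your shortcut is legitimate provided you apply Theorem~\ref{Theorem_GlobalExactness} with the complete space $A$ in place of $X$, since Theorem~\ref{THEOREM_COMPLETEEXACTNESS} only assumes lower semicontinuity on $A$.
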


A proof of Theorem~\ref{THEOREM_COMPLETEEXACTNESS} is given in Appendix~A. If the penalty function
$\Phi_{\lambda}$ satisfies the four statements of this theorem, then it is said to be \textit{completely exact} on the
set $S_{\lambda}(c)$.

\begin{remark}
In the general case, under the assumptions of Theorem~\ref{THEOREM_COMPLETEEXACTNESS} nothing can be
said about locally optimal solutions of the penalised problem \eqref{PenalizedProblem}/inf-stationary points of
$\Phi_{\lambda}$ on $A$ that do not belong to the set $S_{\lambda}(c)$. In order to ensure that the penalty
function $\Phi_{\lambda}$ is completely exact on $A$ (i.e. when $c = +\infty$) one must suppose that the objective
function $\mathcal{I}$ is globally Lipschitz continuous, and there exists $a > 0$ such that 
$\varphi^{\downarrow}_A(x) \le - a$ for all $x \in A \setminus \Omega$ (see \cite[Section~3.3]{Dolgopolik_ExPen_II}).
\end{remark}

\begin{remark}
Let us note that the assumptions of Theorem~\ref{THEOREM_COMPLETEEXACTNESS} cannot be improved (see
\cite[Theorem~3.13]{Dolgopolik_ExPen_II}). On the other hand, the \textit{global} exactness of the penalty function
$\Phi_{\lambda}$ can be proved under weaker assumptions on the penalty term $\varphi$. See~\cite{Dolgopolik_ExPen_I} for
more details.
\end{remark}

In the following section we utilise Theorem~\ref{THEOREM_COMPLETEEXACTNESS} and several auxiliary results on integral
functionals and Nemytskii operators given in Appendix~B to design exact penalty functions for free-endpoint optimal
control problems.

\section{Exact Penalty Functions for Free-Endpoint Optimal Control Problems}
\label{Section_FreeEndpoint}

Consider the following optimal control problem:
\begin{equation} \label{FreeEndPointProblem}
\begin{split}
  {}&\min \: \mathcal{I}(x, u) = \int_0^T \theta(x(t), u(t), t) \, dt + \zeta(x(T)), \\
  {}&\text{subject to } \dot{x}(t) = f(x(t), u(t), t), \quad t \in [0, T], \quad 
  x(0) = x_0, \quad u \in U.
\end{split}
\end{equation}
Here $x(t) \in \mathbb{R}^d$ is the system state at time $t$, $t \to u(t) \in \mathbb{R}^m$ is a control input, 
$f \colon \mathbb{R}^d \times \mathbb{R}^m \times [0, T] \to \mathbb{R}^d$, 
$\theta \colon \mathbb{R}^d \times \mathbb{R}^m \times [0, T] \to \mathbb{R}$, and 
$\zeta \colon \mathbb{R}^d \to \mathbb{R}$ are given functions, while $T > 0$ and $x_0 \in \mathbb{R}^d$ are fixed. We
suppose that $x(\cdot)$ belongs to the space $W_{1, p}^d(0, T)$, which is the Cartesian product of $d$ copies of the
Sobolev space $W^{1, p}(0, T)$, while the control inputs $u(\cdot)$ belong to a closed subset $U$ of the Cartesian
product $L_q^m(0, T)$ of $m$ copies of $L^q(0, T)$. Here $1 < p < + \infty$ and $1 \le q \le + \infty$ (the cases 
$p = 1$ and $p = +\infty$ are excluded to avoid differentiability issues and the use of subdifferentials). For any 
$r \in [1, + \infty]$ denote by $r' \in [1, + \infty]$ the \textit{conjugate exponent} of $r$, i.e. 
$1 / r + 1 / r' = 1$. Also, for any differentiable function $g(x, u, t)$ the gradient of the function 
$x \mapsto g(x, u, t)$ is denoted by $\nabla_x g(x, u, t)$, and a similar notation is used for the gradient of 
the function $u \mapsto g(x, u, t)$.

As usual (see, e.g. \cite{Leoni}), we identify the Sobolev space $W^{1, p}(0, T)$ with the space consisting of all those
absolutely continuous functions $x \colon [0, T] \to \mathbb{R}$ for which $\dot{x} \in L^p(0, T)$. The space 
$L_q^m(0, T)$ with $1 \le q < + \infty$ is equipped with the norm
$$
  \| u \|_q = \left( \int_0^T |u(t)|^q \, dt \right)^{\frac{1}{q}} 
  \quad \forall u \in L_q^m(0, T), 
$$
where $| \cdot |$ is the Euclidean norm, while the space $L_{\infty}^m (0, T)$ is equipped with the norm
$\| u \|_{\infty} = \esssup_{t \in [0, T]}|u(t)|$. The Sobolev space $W_{1, p}^d(0, T)$ is endowed with 
the norm $\| x \|_{1, p} = \| x \|_p + \| \dot{x} \|_p$. Also, below we suppose that the Cartesian product $X \times Y$
of normed spaces $X$ and $Y$ is endowed with the norm $\| (x, y) \| = \| x \|_X + \| y \|_Y$.

\begin{remark} \label{Remark_EquivalentNorms}
For the sake of completeness let us recall two basic facts about norms in Sobolev spaces (see~\cite{Leoni}) that
will be utilised below. From the equality $x(t) = x(0) + \int_0^t \dot{x}(\tau) \, d \tau$ it follows that
$$
  \| x \|_{1, p} \le \big( 1 + \max\{ T, T^{1/p} \} \big) \| x \|_0 \quad \forall x \in W^d_{1, p}(0, T),
$$
where $\| x \|_0 = |x(0)| + \| \dot{x} \|_p$. Hence with the use of the bounded inverse theorem one gets that the norms 
$\| \cdot \|_{1, p}$ and $\| \cdot \|_0$ are equivalent. Moreover, by applying H\"{o}lder's inequality and the equality 
$x(t) = x(0) + \int_0^t \dot{x}(\tau) \, d \tau$ again one obtains that there exists $C > 0$ such that
$\| x \|_{\infty} \le \max\{ 1, T^{1 / p'} \} \| x \|_0 \le C \| x \|_{1, p}$, which implies that any bounded set in
$W^d_{1, p}(0, T)$ is also bounded in $L_{\infty}^d (0, T)$. Let us finally note that from the fact that the operator 
$A \colon L^q(0, T) \to C[0, T]$, $(A x)(t) = \int_0^t x(\tau) \, d \tau$ is compact (which can be easily verified with
the use of the Arzel\`{a}-Ascoli theorem) it follows that for any weakly converging sequence 
$\{ x_n \} \subset W^d_{1, p}(0, T)$ there exists a subsequence $\{ x_{n_k} \}$ strongly converging in $C[0, T]$. This
result is a simple particular case of the Rellich-Kondrachov theorem (see \cite[Theorem~6.2]{Adams}). 
\end{remark}

Our aim is to reduce optimal control problem \eqref{FreeEndPointProblem} to a variational one. To this end we consider
the differential equation $\dot{x}(t) = f(x(t), u(t), t)$ as a constraint that we want to incorporate into a penalty
function. Define $X = W_{1, p}^d(0, T) \times L_q^m(0, T)$ and
$$
  M = \big\{ (x, u) \in X \bigm| F(x, u) = 0 \big\}, \quad
  A = \big\{ (x, u) \in X \bigm| x(0) = x_0, \: u \in U \big\},
$$
where $F(x, u) = \dot{x}(\cdot) - f(x(\cdot), u(\cdot), \cdot)$. Note that the set $A$ is obviously closed.
Problem \eqref{FreeEndPointProblem} can be rewritten as follows:
$$
  \min_{(x, u) \in X} \mathcal{I}(x, u) \quad \text{subject to} \quad (x, u) \in M \cap A.
$$
Formally introduce the penalty term
$$
  \varphi(x, u) = \| F(x, u) \|_p 
  = \left( \int_0^T \big| \dot{x}(t) - f(x(t), u(t), t) \big|^p \, dt \right)^{\frac{1}{p}}.
$$
Clearly, $M = \{ (x, u) \in X \mid \varphi(x, u) = 0 \}$. Therefore one can consider the penalised problem
\begin{equation} \label{FreeEndpointProblem_Penalized}
  \min_{(x, u) \in X} \Phi_{\lambda}(x, u) = \mathcal{I}(x, u) + \lambda \varphi(x, u)
  \quad \text{subject to} \quad (x, u) \in A.
\end{equation}
Observe that this is a variational problem of the form:
\begin{equation} \label{FreEndpointVariationalProblem}
\begin{split}
  {}&\min \: \int_0^T \theta(x(t), u(t), t) \, dt + 
  \lambda \left( \int_0^T \big| \dot{x}(t) - f(x(t), u(t), t) \big|^p \, dt \right)^{\frac{1}{p}} 
  + \zeta(x(T)) \\
  {}&\text{subject to } x(0) = x_0, \quad u \in U,
\end{split}
\end{equation}
With the use of Theorem~\ref{THEOREM_COMPLETEEXACTNESS} we can prove that under some natural assumptions on the
functions $\theta$, $f$ and $\zeta$ this variational problem is equivalent to problem \eqref{FreeEndPointProblem},
provided $\lambda > 0$ large enough. This result allows one to apply methods for solving variational problems to find
optimal solutions of free-endpoint optimal control problems.

\begin{remark}
In most (if not all) optimal control problems appearing in applications, control inputs $u(\cdot)$ are bounded in
$L_{\infty}^m(0, T)$, and the bounds are known in advance, which raises the question of why to consider the case 
$1 \le q < + \infty$. The reason behind this is as follows. Firstly, some authors consider optimal control problems with
only $L^2$-bounded control inputs (see, e.g. \cite{Antipin}), and to apply our results to such problems one must
consider the case $q = 2$. Secondly, the case $q < + \infty$ does not exclude known bounds on control inputs, since one
can define, e.g. $U = \{ u \in L_q^m(0, T) \mid \| u \|_{\infty} \le C \}$ for some $C > 0$, even in the case 
$1 \le q < + \infty$. The reason to suppose $q < + \infty$ in this case is related to the analysis of numerical
methods. Although ``discretise-then-optimise''-type methods are prevalent, there exist some continuous methods for
solving optimal control problems (see, e.g.~\cite{Outrata83,Outrata88,FominyhKarelin2018}), some of which are based on
the minimisation of the penalty function $\Phi_{\lambda}(x, u)$. These methods are usually formulated and analysed in
the case $p = q = 2$, i.e. in the Hilbert space setting, when one can utilise inner products. Therefore, to make
the theory of exact penalty functions consistent with these methods one must consider the case $q = 2$, even in the
presence of known bounds on control inputs. Finally, from the mathematical standpoint it is important to analyse the
general case $1 \le q \le + \infty$ to understand the limitations of the general theory of exact penalty functions and,
in particular, Theorem~\ref{THEOREM_COMPLETEEXACTNESS}.
\end{remark}

Recall that for any $c, \delta > 0$ we define $S_{\lambda}(c) = \{ (x, u) \in A \mid \Phi_{\lambda}(x, u) < c \}$ and
$\Omega_{\delta} = \{ (x, u) \in A \mid \varphi(x, u) < \delta \}$. Note that $\Omega_{\delta}$ consists of all those
$(x, u) \in W^d_{1, p}(0, T) \times U$ that satisfy the perturbed equation
$$
  \dot{x}(t) = f(x(t), u(t), t) + w(t), \quad t \in [0, T], \quad x(0) = x_0
$$
for some $w \in L_p^d(0, T)$ with $\| w \|_p < \delta$. Let $\mathcal{I}^*$ be the optimal value of problem
\eqref{FreeEndPointProblem}. We also need the following definition to conveniently formulate assumptions on the
functions $\theta$ and $f$.

\begin{definition}
Let $g \colon \mathbb{R}^d \times \mathbb{R}^m \times [0, T] \to \mathbb{R}^k$ be a given function. We say that $g$
satisfies \textit{the growth condition} of order $(l, s)$ with $0 \le l < + \infty$ and $1 \le s \le + \infty$, if
for any $R > 0$ there exist $C_R > 0$ and an a.e. nonnegative function $\omega_R \in L^s(0, T)$ such that 
$|g(x, u, t)| \le C_R |u|^l + \omega_R(t)$ for a.e. $t \in [0, T]$ and for all 
$(x, u) \in \mathbb{R}^d \times \mathbb{R}^m$ with $|x| \le R$.
\end{definition}

With the use of several auxiliary results on integral functionals and Nemytskii operators from Appendix~B we can prove
the following theorem, which provides conditions under which free-endpoint optimal control problem
\eqref{FreeEndPointProblem} and variational problem \eqref{FreEndpointVariationalProblem} are equivalent.

\begin{theorem} \label{Thrm_FreeEndPointProblem}
Let the following assumptions be valid:
\begin{enumerate}
\item{$\zeta$ is locally Lipschitz continuous, $\theta$ is continuous and differentiable in $x$ and $u$, $f$ is
continuous and differentiable in $x$, and the functions $\nabla_x \theta$, $\nabla_u \theta$ and $\nabla_x f$ are
continuous;
\label{Assumpt_FEPP_Differentiability}}

\item{either $q = + \infty$ or the functions $\theta$ and $\nabla_x \theta$ satisfy the growth condition of order 
$(q, 1)$, the function $\nabla_u \theta$ satisfies the growth condition of order $(q - 1, q')$, and the functions $f$
and $\nabla_x f$ satisfy the growth condition of order $(q / p, p)$;
\label{Asumpt_FEPP_GrowthConditions}}

\item{there exists a globally optimal solution of problem \eqref{FreeEndPointProblem};}

\item{there exist $\lambda_0 > 0$, $c > \mathcal{I}^*$, and $\delta > 0$ such that the set 
$S_{\lambda_0}(c) \cap \Omega_{\delta}$ is bounded in $W^d_{1, p}(0, T) \times L_q^m(0, T)$, and the function
$\Phi_{\lambda_0}(x, u)$ is bounded below on $A$.
\label{Asumpt_FEPP_SublevelBounded}}
\end{enumerate}
Then there exists $\lambda^* \ge 0$ such that for any $\lambda \ge \lambda^*$ the penalty function $\Phi_{\lambda}$
for problem \eqref{FreeEndPointProblem} is completely exact on $S_{\lambda}(c)$.
\end{theorem}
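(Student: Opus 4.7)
The plan is to apply the Main Theorem (Theorem~\ref{THEOREM_COMPLETEEXACTNESS}) on the Banach space $X = W^d_{1,p}(0,T) \times L^m_q(0,T)$ with the sets $M$, $A$, $\Omega$ and the penalty term $\varphi$ defined above. The space $X$ is complete, and $A$ is closed because $U$ is closed and the evaluation $x \mapsto x(0)$ is continuous on $W^{1,p}$ via the embedding $W^{1,p}(0,T) \hookrightarrow C[0,T]$ recalled in Remark~\ref{Remark_EquivalentNorms}.

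Under the growth conditions of assumption~\ref{Asumpt_FEPP_GrowthConditions}, the Nemytskii-operator and integral-functional continuity results of Appendix~B give that $\mathcal{I}$ and $\varphi$ are continuous on bounded subsets of $X$; hence both are l.s.c.\ on $A$ (since any convergent sequence in $A$ is bounded) and $\varphi$ is continuous at every point of $\Omega$. By assumption~\ref{Asumpt_FEPP_SublevelBounded} the set $S_{\lambda_0}(c) \cap \Omega_\delta$ is bounded in $X$, so the $x$-components are uniformly bounded in $L^\infty$ via the Sobolev embedding, and after enlarging to an open bounded neighbourhood $V$ of this set the Lipschitz-continuity criteria of Appendix~B apply to $\int_0^T \theta\,dt$ (using the growth conditions on $\nabla_x\theta$ and $\nabla_u\theta$), while $(x,u)\mapsto \zeta(x(T))$ is Lipschitz from local Lipschitz continuity of $\zeta$. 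This verifies item~1 of Theorem~\ref{THEOREM_COMPLETEEXACTNESS}; item~3 (boundedness below of $\Phi_{\lambda_0}$ on $A$) is the second half of assumption~\ref{Asumpt_FEPP_SublevelBounded}.

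The principal step is the descent-rate inequality (item~2). For any $(x,u)\in S_{\lambda_0}(c)\cap(\Omega_\delta\setminus\Omega)$, set $w = -(\dot x - f(x,u,\cdot))/\varphi(x,u)$, so that $\|w\|_p=1$, and let $z\in W^d_{1,p}(0,T)$ solve the linear problem
$$
  \dot z(t) = \nabla_x f(x(t),u(t),t)\,z(t) + w(t), \quad z(0)=0.
$$
A variation-of-constants estimate, together with the growth condition on $\nabla_x f$ and the uniform $L^\infty$-bound on $x$, furnishes a constant $K>0$ with $\|z\|_{1,p}\le K$ independent of the chosen $(x,u)$. Setting $y_\alpha := x + \alpha z$, one has $(y_\alpha,u)\in A$, and a first-order Taylor expansion of $f$ in $x$, justified by the $C^1$-regularity together with a dominated-convergence argument whose majorant comes from the growth condition on $\nabla_x f$, gives
$$
  \dot y_\alpha - f(y_\alpha,u,\cdot)
  = \bigl(1-\alpha/\varphi(x,u)\bigr)\bigl(\dot x - f(x,u,\cdot)\bigr) + r(\alpha),
  \quad \|r(\alpha)\|_p = o(\alpha).
$$
Taking $L^p$-norms yields $\varphi(y_\alpha,u)\le \varphi(x,u) - \alpha + o(\alpha)$. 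Dividing by $\|(y_\alpha,u)-(x,u)\|_X = \alpha\|z\|_{1,p}$ and letting $\alpha\to 0^+$ gives $\varphi^\downarrow_A(x,u) \le -1/\|z\|_{1,p} \le -1/K$, so $a := 1/K$ works. Theorem~\ref{THEOREM_COMPLETEEXACTNESS} then delivers complete exactness of $\Phi_\lambda$ on $S_\lambda(c)$ for all sufficiently large $\lambda$.

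The hardest part is this descent-rate step, in which one must simultaneously (i)~construct a perturbation direction $z$ that linearises the ODE residual so that the leading-order change in $\varphi$ matches the negative scaling $-\alpha$, and (ii)~control $\|z\|_{1,p}$ \emph{uniformly} over $S_{\lambda_0}(c)\cap\Omega_\delta$; both ingredients hinge on the growth conditions of assumption~\ref{Asumpt_FEPP_GrowthConditions} to obtain $L^p$-bounds on $\nabla_x f(x(\cdot),u(\cdot),\cdot)$, combined with the Sobolev embedding. The case $q=+\infty$ is easier because uniform $L^\infty$-bounds on $u$ take the place of the growth hypotheses.
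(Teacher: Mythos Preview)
Your proposal is correct and reaches the same conclusion via Theorem~\ref{THEOREM_COMPLETEEXACTNESS}, but the treatment of the decisive descent-rate condition is genuinely different from the paper's.

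The paper performs a ``transition into the space of derivatives'' (replacing $x\in A_s$ by $z=\dot x\in L^d_p$), shows that the auxiliary functional $\gamma_u(z)=\varphi(Jz,u)$ is G\^ateaux differentiable wherever it is positive, and then integrates by parts to rewrite $\gamma_u'(z)$ as $(I-\mathcal{K}_y)w$ with a Volterra operator $\mathcal{K}_y$ whose kernel is $\nabla_x f(x(\cdot),u(\cdot),\cdot)^T$; the uniform lower bound on $\|\gamma_u'(z)\|$ then comes from the Neumann-series resolvent estimate of Lemma~\ref{Lemma_BoundedResolvent}. You instead construct the descent direction explicitly as the solution $z$ of the linearised forward equation $\dot z=\nabla_x f\,z+w$, $z(0)=0$, and obtain the uniform bound $\|z\|_{1,p}\le K$ by Gr\"onwall together with the growth condition on $\nabla_x f$ (which gives an $L^p$, hence $L^1$, bound on the coefficient matrix uniformly over the bounded set $S_{\lambda_0}(c)\cap\Omega_\delta$). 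The two arguments are dual: your forward fundamental solution is the counterpart of the paper's backward resolvent $(I-\mathcal{K}_y)^{-1}$. Your route is somewhat more elementary---it avoids differentiating the $L^p$-norm and invoking a separate resolvent lemma---while the paper's formulation yields the sharper statement $(\gamma_u)^\downarrow(z)\le -\|(I-\mathcal{K}_y)^{-1}\|^{-1}$ and makes the dependence on the problem data explicit. Both rely on exactly the same structural input, namely the uniform $L^p$-bound on $\nabla_x f(x(\cdot),u(\cdot),\cdot)$ coming from assumption~\ref{Asumpt_FEPP_GrowthConditions} and the Sobolev embedding.
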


\begin{proof}
Our aim is to apply Theorem~\ref{THEOREM_COMPLETEEXACTNESS}. By Proposition~\ref{Prp_Functional_Correct} below the
growth condition on $\theta$ ensures that the functional $\mathcal{I}(x, u)$ is correctly defined and finite 
for any $(x, u) \in X$. In turn, the growth conditions on $\nabla_x \theta$ and $\nabla_u \theta$ guarantee that the
mapping $(x, u) \mapsto \int_0^T \theta(x(t), u(t), t) \, dt$ is Lipschitz continuous on any bounded subset of
$L_{\infty}^d(0, T) \times L_q^m(0, T)$ by Proposition~\ref{Prp_LipschitzConditions}. From
Remark~\ref{Remark_EquivalentNorms} it follows that any bounded subset of $X$ is bounded in 
$L_{\infty}^d(0, T) \times L_q^m(0, T)$. Therefore by applying the fact that $\zeta$ is locally Lipschitz continuous one
obtains that the functional $\mathcal{I}(x, u)$ is correctly defined and Lipschitz continuous on any bounded subset of
$X$ (in particular, on any bounded open set containing $S_{\lambda_0}(c) \cap \Omega_{\delta}$; recall that
$S_{\lambda_0}(c) \cap \Omega_{\delta}$ is bounded by our assumption). Finally, by applying the growth condition on the
function $f$ and Proposition~\ref{Prp_DiffEqConstr_Correct} one gets that the penalty term $\varphi(x, u)$ is correctly
defined and continuous on $X$. Let us check that for any bounded set $K \subset A$ there exists $a > 0$ such that
$\varphi^{\downarrow}_A(x, u) \le - a$ for any $(x, u) \in K \setminus \Omega$. Then by applying
Theorem~\ref{THEOREM_COMPLETEEXACTNESS} one gets the desired result.

Define $A_s = \{ x \in W^d_{1, p}(0, T) \mid x(0) = x_0 \}$, and for any $u \in U$ introduce the function
$\varphi_u(x) = \varphi(x, u)$. Observe that $\varphi^{\downarrow}_A(x, u) \le (\varphi_u)^{\downarrow}_{A_s}(x)$ for
any $(x, u) \in A$ due to the fact that $A = A_s \times U$. Therefore, it is sufficient to check that for any
bounded set $K \subset A$ there exists $a > 0$ such that $(\varphi_u)^{\downarrow}_{A_s}(x) \le - a$ for all 
$(x, u) \in K \setminus \Omega$, i.e. for all $(x, u) \in K$ such that $\varphi_u(x) > 0$. To simplify the computation
of $(\varphi_u)^{\downarrow}_{A_s}(x)$ we apply a change of variables called ``transition into the space of
derivatives'' that was widely utilised by Demyanov in his works on exact penalty functions
(see, e.g. \cite{Demyanov2004,Demyanov2005,Demyanov2003}).

For any $z \in L_p^d(0, T)$ define $(J z)(t) = x_0 + \int_0^t z(\tau) d \tau$ for all $t \in [0, T]$ and 
$\gamma_u(z) = \varphi_u(J z)$. From the Lebesgue differentiation theorem it follows that the operator $J$ is a
one-to-one correspondence between $L_p^d (0, T)$ and $A_s$ (see~\cite{Leoni}). Furthermore, by H\"{o}lder's inequality
one has $\| Jz - Jw \|_{1, p} \le (1 + T) \| z - w \|_p$ for any $z, w \in L_p^d(0, T)$. Consequently, if for
some $x \in A_s$ and $u \in U$ one has $(\gamma_u)^{\downarrow}(\dot{x}) < 0$, then
\begin{align*}
  0 > (\gamma_u)^{\downarrow}(\dot{x}) 
  = \liminf_{z \to \dot{x}} \frac{\gamma_u(z) - \gamma_u(\dot{x})}{\| z - \dot{x} \|_p}
  &= \liminf_{y \in A_s, y \to x} \frac{\gamma_u(\dot{y}) - \gamma_u(\dot{x})}{\| \dot{y} - \dot{x} \|_p} \\
  &\ge (1 + T) \liminf_{y \in A_s, y \to x} \frac{\varphi_u(y) - \varphi_u(x)}{\| y - x \|_{1, p}}
  = (1 + T) (\varphi_u)^{\downarrow}_{A_s}(x).
\end{align*}
Therefore, it is sufficient to check that for any bounded sets $Z \subset L_p^d(0, T)$ and $V \subseteq U$ there
exists $a > 0$ such that $(\gamma_u)^{\downarrow}(z) \le - a$ for all $(z, u) \in Z \times V$ such that 
$\gamma_u(z) > 0$ (note that the transition into the space of derivatives allowed us to ``remove'' the constraint 
$x \in A_s$).

Fix any bounded sets $Z \subset L_p^d(0, T)$ and $V \subseteq U$. Introduce the function
$H \colon \mathbb{R}^d \to \mathbb{R}^d$,
$$
  H(x) = \begin{cases}
    |x|^{p - 2} x, & \text{if } x \ne 0, \\
    0, & \text{if } x = 0,
  \end{cases}
$$
and for any $(z, u) \in Z \times V$ define $G(z, u)(\cdot) = H(F(Jz, u)(\cdot))$, where, as above, 
$F(x, u) = \dot{x}(\cdot) - f(x(\cdot), u(\cdot), \cdot)$. It is easy to verify that $H$ is a continuous function
(recall that $p > 1$), which implies that that the function $G(z, u)(\cdot)$ is measurable. Furthermore, for
any $x \in L_p^d(0, T)$ one has $H(x(\cdot)) \in L_{p'}^d (0, T)$, which by the growth condition on $f$ and
Proposition~\ref{Prp_DiffEqConstr_Correct} implies that $G(z, u) \in L_{p'}^d (0, T)$ for any $(z, u) \in Z \times V$.
Note also that
$$
  \frac{1}{\gamma_u(z)^{p - 1}} \int_0^T \big\langle G(z, u)(t), F(Jz, u)(t) \big\rangle \, dt = \| F(Jz, u) \|_p =
\gamma_u(z),
$$
provided $\gamma_u(z) > 0$. Here $\langle \cdot, \cdot \rangle$ is the inner product in $\mathbb{R}^d$.

Taking into account the growth condition on $\nabla_x f$ and
Proposition~\ref{Prp_DiffEqConstr_Diferentiable} (see Appendix~B) one gets that the mapping $F_u(x) = F(x, u)$ is
G\^{a}teaux differentiable for any $(x, u) \in X$, and its G\^{a}teaux derivative has the form
$$
  F'_u(x)[h] = \dot{h}(\cdot) - \nabla_x f(x(\cdot), u(\cdot), \cdot) h(\cdot) \quad \forall h \in W_{1, p}^d(0, T).
$$
Hence by applying the fact that the norm $\| \cdot \|_p$ is Fr\'{e}chet differentiable (this fact follows, e.g. from
\cite[Theorem~2.2.1]{Diestel}) and the chain rule one obtains that for any $(z, u) \in Z \times V$ such that 
$\gamma_u(z) > 0$ the function $\gamma_u$ is G\^{a}teaux differentiable at $z$, and
$$
  \gamma_u'(z)[h] = 
  \frac{1}{\gamma_u(z)^{p - 1}} \int_0^T \Big\langle G(z, u)(t), 
  h(t) - \nabla_x f (x(t), u(t), t) \int_0^t h(\tau) \, d \tau \Big\rangle \, dt
  \quad \forall h \in L_p^d(0, T)
$$
(here $x = J z$). Integrating by parts one obtains that
$$
  \gamma_u'(z)[h] = \int_0^T 
  \left\langle w(x, u)(t) - \int_t^T \nabla_x f (x(\tau), u(\tau), \tau)^T w(x, u)(\tau) \, d \tau,
  h(t) \right\rangle \, dt,
$$
where $w(x, u) = \gamma_u(z)^{1 - p} G(x, u)$. Consequently, taking into account the fact that $\| w(x, u) \|_{p'} = 1$
one gets that
$$
  (\gamma_u)^{\downarrow}(z) \le -\| \gamma_u'(z) \| = -\| (I - \mathcal{K}_y) w(x, u) \|_{p'} 
  \le - \frac{1}{\| (I - \mathcal{K}_y)^{-1} \|},
$$
where $I$ is the identity operator and
$$
  \big( \mathcal{K}_y h \big)(t) = \int_t^T y(s) h(s) \, ds \quad \forall h \in L_{p'}^d(0, T), 
  \quad y(s) = \nabla_x f(x(s), u(s), s)^T.
$$
From the facts that $\nabla_x f$ satisfies the growth condition of order $(q/p, p)$, and the sets $Z$ and $V$
are bounded it follows that the set $\{ \nabla_x f(Jz(\cdot), u(\cdot), \cdot) \mid (z, u) \in Z \times V \}$ of
kernels of the integral operators $\mathcal{K}_y$ is bounded in $L_p^{d \times d}(0, T)$. Hence by applying
Lemma~\ref{Lemma_BoundedResolvent} on the resolvent of a Volterra-type integral equation one obtains that there exists
$a > 0$ such that $\| (I - \mathcal{K}_y)^{-1} \| < 1 / a$, i.e. $(\gamma_u)^{\downarrow}(z) \le - a$, for any 
$(z, u) \in Z \times V$, and the proof is complete.
\end{proof}

\begin{remark}
Let $1 < q < + \infty$, the set $U$ be convex (or, more generally, weakly closed), and the following
assumptions be valid:
\begin{enumerate}
\item[(i)]{$f(x, u, t) = f_0(x, t) + g(x, t) u$, where the functions $f_0$ and $g$ are continuous;}

\item[(ii)]{$\theta(x, u, t)$ is convex in $u$ for all $x \in \mathbb{R}^d$ and $t \in [0, T]$.}
\end{enumerate}
Then under assumptions \ref{Assumpt_FEPP_Differentiability}, \ref{Asumpt_FEPP_GrowthConditions}, and
\ref{Asumpt_FEPP_SublevelBounded} of Theorem~\ref{Thrm_FreeEndPointProblem} a globally optimal solution of problem
\eqref{FreeEndPointProblem} exists iff there exists a feasible point of this problem, i.e. iff there exists 
$u \in U$ such that there exists an absolutely continuous solution of the differential equation 
$\dot{x} = f(x, u, t)$ with $x(0) = x_0$ defined on $[0, T]$.

Indeed, if a feasible point exists, then the sublevel set 
$\{ (x, u) \in \Omega \mid \mathcal{I}(x, u) < c \} \subset S_{\lambda_0}(c) \cap \Omega_{\delta}$ is nonempty and
bounded. Let $\{ (x_n, u_n) \} \subset \Omega$ be a sequence such that $\mathcal{I}(x_n, u_n) \to \mathcal{I}^*$ as 
$n \to \infty$. Since $c > \mathcal{I}^*$, the sequence $\{ (x_n, u_n) \}$ is bounded, which due to the reflexivity of
$L_q^m(0, T)$ and $W^d_{1, p}(0, T)$ for $1 < q, p < + \infty$ implies that one can extract 
a subsequence $\{ (x_{n_k}, u_{n_k}) \}$ weakly converging to some $(x^*, u^*)$. Note that $u^* \in U$, since
$U$ is weakly closed. Furthermore, by Remark~\ref{Remark_EquivalentNorms} one can suppose that $x_{n_k}$
converges to $x^*$ uniformly on $[0, T]$. Hence by applying assumption~(i) one can easily check that $(x^*, u^*)$ is a
feasible point of problem \eqref{FreeEndPointProblem}, while assumption~(ii) ensures 
that $\mathcal{I}(x^*, u^*) \le \liminf_{k \to \infty} \mathcal{I}(x_{n_k}, u_{n_k}) = \mathcal{I}^*$
(see~\cite[Section~7.3.2]{FonsecaLeoni} and \cite{Ioffe77}), which implies that $(x^*, u^*)$ is a globally optimal
solution of problem \eqref{FreeEndPointProblem}.

Note finally that the existence of a feasible point of problem \eqref{FreeEndPointProblem} can be proved with the use
of various standard results from the theory of differential equations. For example, it exists, if there
exist $u \in U$, $C > 0$, and a.e. nonnegative function $\omega \in L^1(0, T)$ such that 
$|f(x, u(t), t)| \le C|x| + \omega(t)$ for a.e. $t \in (0, T)$ and for all $x \in \mathbb{R}^d$ 
(cf. Proposition~\ref{Proposition_LevelBoundness} and Remark~\ref{Remark_BihariLaSalle} below). 
\end{remark}

Let us also point out several simple ways to verify the boundedness of the set $S_{\lambda}(c) \cap \Omega_{\delta}$
and the boundedness below of $\Phi_{\lambda}(x, u)$ on the set $A$. One can utilise a combination of these ways and a
structure of the problem in order to verify the boundedness conditions for particular optimal control problems.

\begin{proposition} \label{Proposition_LevelBoundness}
Let $\theta$ and $f$ be continuous, and one of the following assumptions be valid:
\begin{enumerate}
\item{the set $U$ is bounded in $L_{\infty}^m(0, T)$, and for any $R > 0$ there exist $C_R > 0$ and an a.e.
nonnegative function $\omega_R \in L^1(0, T)$ such that
\begin{equation} \label{LinearGrowthRHS_BoundedControl}
  |f(x, u, t)| \le C_R |x| + \omega_R(t), \quad
  \theta(x, u, t) \ge - C_R |x| - \omega_R(t)
\end{equation}
for all $(x, u) \in \mathbb{R}^d \times \mathbb{R}^m$ with $|u| \le R$ and for a.e. $t \in (0, T)$, and there exist
$K_1, K_2 \ge 0$ such that $\zeta(x) \ge - K_1 |x| - K_2$ for all $x \in \mathbb{R}^d$;
\label{Assumpt_BoundedControl}}

\item{$1 \le q < + \infty$, there exist $C_f > 0$ and a.e. nonnegative function $\omega_f \in L^p(0, T)$ such that
\begin{equation} \label{qpGrowth_in_Control}
  |f(x, u, t)| \le C_f \big( |x| + |u|^{\frac{q}{p}} \big) + \omega_f(t)
\end{equation}
for all $(x, u) \in \mathbb{R}^d \times \mathbb{R}^m$ and a.e. $t \in (0, T)$, and one of the two following assumptions
is valid:
\begin{enumerate}
\item{$U$ is bounded in $L_q^m(0, T)$, and there exist $C, K_1, K_2 > 0$, and an a.e. nonnegative function 
$\omega \in L^1(0, T)$ such that for all $(x, u)$ and a.e. $t \in (0, T)$ 
one has $\theta(x, u, t) \ge - C( |x| + |u|^q ) - \omega(t)$ and $\zeta(x) \ge - K_1 |x| - K_2$;
}

\item{$\zeta$ is bounded below, and there exist $C > 0$ and $\omega \in L^1(0, T)$ such that 
$\theta(x, u, t) \ge C |u|^q + \omega(t)$ for all $(x, u) \in \mathbb{R}^d \times \mathbb{R}^m$ and a.e. $t \in (0, T)$.
}
\end{enumerate}
\label{Assumpt_L_q_BoundedControl}}

\item{$1 \le q < + \infty$, $\zeta$ is bounded below, there exist $s \ge r \ge 1$, $C > 0$, and an a.e. nonnegative
function $\omega \in L^p(0, T)$ such that
\begin{equation} \label{Growth_in_State_and_Control}
  |f(x, u, t)| \le C\big( |x|^r + |u|^{\frac{q}{p}} \big) + \omega(t), \quad
  \theta(x, u, t) \ge C\big( |x|^s + |u|^q \big) - \omega(t)
\end{equation}
for all $(x, u) \in \mathbb{R}^d \times \mathbb{R}^m$ and a.e. $t \in (0, T)$.
\label{Assumpt_UnboundedControl}}
\end{enumerate}
Then there exists $\lambda_0 \ge 0$ such that for any $c \in \mathbb{R}$, $\delta > 0$, and $\lambda \ge \lambda_0$ the
set $S_{\lambda}(c) \cap \Omega_{\delta}$ is bounded, and the function $\Phi_{\lambda}$ is bounded below on $A$.
Furthermore, if either assumption~\ref{Assumpt_BoundedControl} or assumption~\ref{Assumpt_L_q_BoundedControl} is
satisfied, then there exists a feasible point of problem \eqref{FreeEndPointProblem}.
\end{proposition}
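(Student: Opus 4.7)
The plan is, under each of the three hypotheses separately, (a) to bound the states and controls uniformly on $S_\lambda(c) \cap \Omega_\delta$, and (b) to produce on $A$ an affine lower bound of the form $\mathcal I(x,u) \ge -B_1 - B_2 \varphi(x,u)$, from which $\Phi_\lambda(x,u) \ge -B_1$ on $A$ whenever $\lambda \ge B_2$. The common engine is to set $w := \dot x - f(x(\cdot),u(\cdot),\cdot)$, so that $\|w\|_p = \varphi(x,u)$, write
\[
|x(t)| \le |x_0| + \int_0^t \bigl(|f(x(s),u(s),s)| + |w(s)|\bigr)\,ds,
\]
and apply Gr\"onwall's inequality together with H\"older's inequality in the forms $\|w\|_1 \le T^{1/p'}\|w\|_p$ and $\int_0^T |u(s)|^{q/p}\,ds \le T^{1/p'}\|u\|_q^{q/p}$.

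Under assumption~\ref{Assumpt_BoundedControl}, the $L^\infty$-bound $\|u\|_\infty \le R$ reduces \eqref{LinearGrowthRHS_BoundedControl} to $|f(x,u(t),t)| \le C_R|x| + \omega_R(t)$; Gr\"onwall produces $\|x\|_\infty \le \bigl(|x_0| + \|\omega_R\|_1 + T^{1/p'}\varphi(x,u)\bigr)e^{C_R T}$, and then $\|\dot x\|_p \le \|f\|_p + \|w\|_p$ yields a bound on $\|x\|_{1,p}$ that is affine in $\varphi$. Plugging this estimate into $\theta \ge -C_R|x| - \omega_R$ and $\zeta \ge -K_1|x| - K_2$ gives the required affine lower bound on $\mathcal I$. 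Existence of a feasible point follows by fixing any $u \in U$ and invoking Carath\'eodory's existence theorem: the right-hand side $f(\cdot,u(t),t)$ has linear growth in $x$ with $L^1$ coefficient, so the local solution extends to all of $[0,T]$ in $W^{1,p}$. Assumption~\ref{Assumpt_L_q_BoundedControl} is handled by the same scheme via the H\"older estimate above: in subcase~(a) the bound on $\|u\|_q$ is part of the definition of $U$, whereas in subcase~(b) one reads it off from $\theta \ge C|u|^q + \omega$ together with $\Phi_\lambda(x,u) < c$ and $\zeta \ge \zeta_0$.

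Assumption~\ref{Assumpt_UnboundedControl} is where I anticipate the principal obstacle. From $\Phi_\lambda(x,u) < c$, $\theta \ge C(|x|^s + |u|^q) - \omega$, and $\zeta \ge \zeta_0$ one immediately obtains $\|x\|_s^s + \|u\|_q^q \le K$; but to bound $\|\dot x\|_p$ via $\|\dot x\|_p \le \|f\|_p + \|w\|_p$ one must also control $\int_0^T|x|^{rp}\,dt$, which H\"older dominates by $T^{1 - rp/s}\|x\|_s^{rp}$ in the range $rp \le s$ and must be handled by a Bihari-type nonlinear Gr\"onwall argument combined with the $L^s$-bound on $x$ otherwise. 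The lower bound on $\Phi_\lambda$ over $A$ is, by contrast, immediate here: discarding the nonnegative terms $C(|x|^s + |u|^q)$ in $\theta$ and using $\zeta \ge \zeta_0$ gives $\mathcal I(x,u) \ge -\|\omega\|_1 + \zeta_0$ uniformly on $A$, so $\Phi_\lambda \ge -\|\omega\|_1 + \zeta_0$ for every $\lambda \ge 0$; no feasibility claim is made under this third assumption.
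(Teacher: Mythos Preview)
Your treatment of assumptions~\ref{Assumpt_BoundedControl} and~\ref{Assumpt_L_q_BoundedControl} matches the paper's argument: Gr\"onwall on the perturbed integral equation yields $\|x\|_\infty$ affine in $\varphi(x,u)$, from which both the $W^{1,p}$-bound on $x$ and the affine lower bound $\mathcal I \ge -B_1 - B_2\varphi$ follow exactly as you outline.

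For assumption~\ref{Assumpt_UnboundedControl} you are manufacturing an obstacle that is not there, and your proposed workaround would not close. A Bihari-type inequality applied to $|x(t)| \le |x_0| + C\int_0^t |x(\tau)|^r\,d\tau + \ldots$ with $r > 1$ yields a bound only on a time interval whose length depends on the data; it does not control $x$ on all of $[0,T]$ in general. The paper's route is simpler and uses only the stated hypothesis $s \ge r$ (not $s \ge rp$). Once $\|x\|_s$ and $\|u\|_q$ are bounded from $\Phi_\lambda(x,u) < c$, the integral form of the perturbed equation
\[
|x(t)| \le |x_0| + \int_0^t \Bigl(C|x(\tau)|^r + C|u(\tau)|^{q/p} + \omega(\tau) + |w(\tau)|\Bigr)\,d\tau
\]
already gives a uniform bound on $\|x\|_\infty$ \emph{with no Gr\"onwall step at all}, because every integrand on the right is controlled: in particular $\int_0^T |x|^r \le T^{1-r/s}\|x\|_s^r$ by H\"older. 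With $\|x\|_\infty \le K$ in hand, $\int_0^T |x|^{rp}\,dt \le T K^{rp}$ is trivial and $\|\dot x\|_p \le \|f\|_p + \|w\|_p$ closes immediately. The missing idea in your plan is this intermediate $L^\infty$-bound on $x$; once you insert it, Part~\ref{Assumpt_UnboundedControl} becomes the easiest of the three rather than the hardest.
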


\begin{proof}
\textbf{Part~\ref{Assumpt_BoundedControl}}.~Fix $\delta > 0$. By definition for any $(x, u) \in \Omega_{\delta}$ one
has $\| F(x, u) \| < \delta$. Hence there exists $w \in L^d_p(0, T)$ with $\| w \|_p < \delta$ such that
\begin{equation} \label{PerturbedEquation}
  \dot{x}(t) = f(x(t), u(t), t) + w(t) \quad \text{for a.e. } t \in [0, T]
\end{equation}
or, equivalently,
\begin{equation} \label{PerturbedEq_IntegralForm}
  x(t) = x_0 + \int_0^t f(x(\tau), u(\tau), \tau) d \tau + \int_0^t w(\tau) \, d \tau \quad \forall t \in [0, T].
\end{equation}
Since $U$ is bounded in $L_{\infty}^m(0, T)$, there exists $R > 0$ such that for all $u \in U$ one
has $|u(t)| \le R$ for a.e. $t \in (0, T)$. Consequently, there exists $C_R > 0$ and an a.e. nonnegative function
$\omega_R \in L^1(0, T)$ such that for any $(x, u) \in \Omega_{\delta}$ one has
\begin{equation} \label{IntegralInequalOnPerturbedSol}
  |x(t)| \le |x_0| + \int_0^t \big( C_R |x(\tau)| + \omega_R(\tau) + |w(\tau)| \big) \, d \tau
  \quad \forall t \in [0, T]
\end{equation}
for some $w \in L^d_p(0, T)$ with $\| w \|_p < \delta$. By applying the Gr\"{o}nwall-Bellman inequality one obtains that
\begin{equation} \label{GronwallInequalPerturbedSol}
  |x(t)| \le \alpha(t) + C_R \int_0^t \alpha(\tau) e^{C_R(t - \tau)} \, d \tau \quad \forall t \in [0, T],
\end{equation}
where $\alpha(t) = |x_0| + \int_0^t (\omega_R(\tau) + |w(\tau)|) \, d \tau$. With the use of H\"{o}lder's inequality one
gets that 
\begin{equation} \label{GronwallInequal_NonintegralTerm}
  \| \alpha \|_{\infty} \le |x_0| + \| \omega_R \|_1 + T^{\frac{1}{p'}} \delta.
\end{equation}
Therefore, the set $\Omega_{\delta}$ is bounded in $L_{\infty}^d (0, T) \times L_{\infty}^m (0, T)$, which implies that
there exists $C > 0$ such that $|f(x(t), u(t), t)| \le C$ for a.e. $t \in [0, T]$ and for all 
$(x, u) \in \Omega_{\delta}$. Hence and from \eqref{PerturbedEquation} it follows that for all 
$(x, u) \in \Omega_{\delta}$ one has
$$
  |\dot{x}(t)|^p \le 2^p (C^p + |w(t)|^p) \quad \text{for a.e. } t \in [0, T].
$$
Integrating this inequality from $0$ to $T$ and taking into account the fact that $\| w \|_p < \delta$ one obtains that
$\Omega_{\delta}$ is bounded in $X$. Thus, $S_{\lambda}(c) \cap \Omega_{\delta}$ is bounded in $X$ for any 
$\lambda \ge 0$, $c \in \mathbb{R}$ and $\delta > 0$. 

Fix $(x, u) \in A$, and let $\delta = \varphi(x, u)$. From \eqref{GronwallInequalPerturbedSol} and
\eqref{GronwallInequal_NonintegralTerm} it follows that there exist $C_1, C_2 > 0$ depending only on $C_R$, $\omega_R$
and $T$ such that $\| x \|_{\infty} \le C_1 + C_2 \delta$. By applying the second inequality in
\eqref{LinearGrowthRHS_BoundedControl} one obtains that
\begin{align*}
  \Phi_{\lambda}(x, u) = \mathcal{I}(x, u) + \lambda \varphi(x, u) 
  &\ge - T C_R (C_1 + C_2 \delta) - \| \omega_R \|_1
  - K_1 (C_1 + C_2 \delta) - K_2 + \lambda \delta \\
  &\ge - C_1 (T C_R + K_1) - \| \omega_R \|_1 - K_2
\end{align*}
for any $\lambda \ge C_2 (T C_R + K_1)$. Consequently, the penalty function $\Phi_{\lambda}$ is bounded below on $A$ 
for any $\lambda \ge C_2 (T C_R + K_1)$.

Let us now prove the existence of a feasible point. Fix $u \in U$. From the fact that $f$ is continuous it
follows that a solution $x(\cdot)$ of \eqref{PerturbedEquation} with $w(\cdot) \equiv 0$ and $x(0) = x_0$ is defined at
least on some subinterval $[0, T_0)$ of $[0, T]$. By applying \eqref{IntegralInequalOnPerturbedSol} and 
the Gr\"{o}nwall-Bellman inequality one can easily check that  $T_0 = T$, and $x(\cdot)$ is bounded on $[0, T)$.
Furthermore, from the continuity of $f$ it obviously follows that $x \in W^d_{1, \infty}(0, T)$, which implies that 
$(x, u)$ is a feasible point of problem \eqref{FreeEndPointProblem}.

\textbf{Part~\ref{Assumpt_L_q_BoundedControl}}.~Fix $c \in \mathbb{R}$ and $\delta > 0$. By applying either
the boundedness of the set $U$ in $L_q^m(0, T)$ or the boundedness below of $\zeta$ and the
inequalities $\theta(x, u, t) \ge C |u|^q + \omega(t)$ and $\Phi_{\lambda}(\cdot) \ge \mathcal{I}(\cdot)$ one obtains
that there exists $K > 0$ such that $\| u \|_q \le K$ for all $(x, u) \in S_{\lambda}(c)$ and any $\lambda \ge 0$.

As was pointed out above, for any $(x, u) \in \Omega_{\delta}$ there exists $w \in L^d_p(0, T)$ with
$\| w \|_p < \delta$ such that \eqref{PerturbedEq_IntegralForm} holds true. By applying \eqref{qpGrowth_in_Control} one
gets that
\begin{equation} \label{IntegralInequalOnPerturbedSol_Assumpt2}
  |x(t)| \le \alpha(t) + \int_0^t C_f |x(\tau)| \, d \tau, \quad
  \alpha(t) = |x_0| + \int_0^t \big( \omega_f(\tau) + C_f |u(\tau)|^{\frac{q}{p}} + |w(\tau)| \big) \, d \tau
\end{equation}
for any $t \in [0, T]$. Hence with the use of the Gr\"{o}nwall-Bellman and H\"{o}lder's inequalities one obtains that
$$
  |x(t)| \le \alpha(t) + C_f \int_0^t \alpha(\tau) e^{C_f(t - \tau)} \, d \tau \quad \forall t \in [0, T],
  \qquad
  \| \alpha \|_{\infty} \le |x_0| + T^{\frac{1}{p'}} \big( \| \omega_f \|_p + C_f K^{\frac{q}{p}} + \delta \big).
$$
for any $(x, u) \in S_{\lambda}(c) \cap \Omega_{\delta}$, which implies that the set 
$S_{\lambda}(c) \cap \Omega_{\delta}$ is bounded in $L_{\infty}^d(0, T) \times L_q^m(0, T)$ for any $\lambda \ge 0$.
Hence by applying \eqref{qpGrowth_in_Control}, \eqref{PerturbedEquation}, and H\"{o}lder's inequality one can easily
check that this set is bounded in $X$ for any $\lambda \ge 0$.

If $\zeta$ is bounded below and $\theta(x, u, t) \ge C |u|^q + \omega(t)$, then the boundedness below of the penalty
function $\Phi_{\lambda}$ follows from the inequality $\Phi_{\lambda}(\cdot) \ge \mathcal{I}(\cdot)$. On the other
hand, if $U$ is bounded, and the inequalities $\theta(x, u, t) \ge - C( |x| + |u|^q ) - \omega(t)$ and
$\zeta(x) \ge - K_1 |x| - K_2$ are satisfied, then the boundedness below of the penalty function $\Phi_{\lambda}$ can
be proved in the same way as in part~\ref{Assumpt_BoundedControl} of the proposition. 

Finally, let us prove the existence of a feasible point. Fix any $u \in U$. From the continuity of $f$ and
\eqref{qpGrowth_in_Control} it follows that the function $(t, x) \mapsto f(x, u(t), t)$ satisfies the Carath\'{e}odory
condition, and $|f(x, u(t), t)| \le C_f |x| + \eta(t)$ for all $x \in \mathbb{R}^d$ and a.e. $t \in (0, T)$, where
$\eta(\cdot) = C_f |u(\cdot)|^{q/p} + \omega_f(\cdot) \in L^p(0, T)$. Consequently, there exists an absolutely
continuous solution $x(\cdot)$ of \eqref{PerturbedEquation} with $w(\cdot) \equiv 0$ and $x(0) = x_0$ defined at least
on some subinterval $[0, T_0)$ of $[0, T]$. By applying \eqref{IntegralInequalOnPerturbedSol_Assumpt2} and 
the Gr\"{o}nwall-Bellman inequality one obtains that $T_0 = T$, and $x(\cdot)$ is bounded on $[0, T)$. Hence and from
\eqref{qpGrowth_in_Control} and \eqref{PerturbedEquation} with $w(\cdot) \equiv 0$ it follows that 
$x \in W^d_{1, p}(0, T)$, which implies that $(x, u)$ is a feasible point of problem \eqref{FreeEndPointProblem}.

\textbf{Part~\ref{Assumpt_UnboundedControl}}.~Fix $c \in \mathbb{R}$ and $\delta > 0$. From the second
inequality in \eqref{Growth_in_State_and_Control} and the fact that $\zeta$ is bounded below it obviously follows that
for any $\lambda \ge 0$ the penalty function $\Phi_{\lambda}$ is bounded below on $A$, and the set $S_{\lambda}(c)$ is
bounded in $L_s^d(0, T) \times L_q^m(0, T)$.

By applying \eqref{PerturbedEq_IntegralForm} and the first inequality in \eqref{Growth_in_State_and_Control} one obtains
that for any $(x, u) \in \Omega_{\delta}$ there exists $w \in L^d_p(0, T)$ with $\| w \|_p < \delta$ such that
$$
  |x(t)| \le 
  \int_0^t \big( C\big( |x(\tau)|^r + |u(\tau)|^{\frac{q}{p}} \big) + \omega(\tau) + |w (\tau)| \big) \, d \tau
$$
for any $t \in (0, T)$. Hence with the use of H\"{o}lder's inequality and the fact that $s \ge r$ one gets that there
exists $K > 0$ such that $\| x \|_{\infty} \le K$ for any $(x, u) \in S_{\lambda}(c) \cap \Omega_{\delta}$.
Therefore, by applying \eqref{PerturbedEquation} and the first inequality in \eqref{Growth_in_State_and_Control} one
obtains that
$$
  |\dot{x}(t)| \le C K^r + C |u(t)|^{\frac{q}{p}} + \omega(t) + |w(t)|
$$
for a.e. $t \in (0, T)$, where $(x, u) \in S_{\lambda}(c) \cap \Omega_{\delta}$ and $\| w \|_p < \delta$. Consequently,
taking into account the facts that $\omega \in L^p(0, T)$ and the set $(x, u) \in S_{\lambda}(c) \cap \Omega_{\delta}$
is bounded in $L_{\infty}^d(0, T) \times L_q^m(0, T)$, one obtains that there exists $R > 0$ such that 
$\| \dot{x} \|_p \le R$ for any $S_{\lambda}(c) \cap \Omega_{\delta}$, i.e. this set is bounded in $X$.
\end{proof}

\begin{remark} \label{Remark_BihariLaSalle}
Let us note that the assumptions of the first two parts of the proposition above can be relaxed. For example, let 
$R = \sup\{ \| u \|_{\infty} \mid u \in U \}$, and suppose that instead of the first inequality in 
\eqref{LinearGrowthRHS_BoundedControl} the inequality $|f(x, u, t)| \le \eta_R(|x|) + \omega_R(t)$ holds true for all 
$(x, u) \in \mathbb{R}^d \times \mathbb{R}^m$ with $|u| \le R$ and for a.e. $t \in (0, T)$, where 
$\omega_R \in L^1(0, T)$, and $\eta_R \colon [0, + \infty) \to [0, + \infty)$ is a continuous non-decreasing function
such that $\eta_R(s) > 0$ for any $s > 0$. Then arguing in the same way as in the proof of
Proposition~\ref{Proposition_LevelBoundness}, but utilising the Bihari-LaSalle inequality instead of 
the Gr\"{o}nwall-Bellman inequality one can easily verify that the set $\Omega_{\delta}$ is bounded, provided $T > 0$
satisfies the assumptions of the Bihari-LaSalle inequality. However, to ensure the boundedness below of the penalty
function $\Phi_{\lambda}$ in this case one must suppose that both functions $\theta$ and $\zeta$ are bounded below.
\end{remark}

\begin{remark}
Proposition~\ref{Proposition_LevelBoundness} demonstrates how one can prove the boundedness of the set
$S_{\lambda}(c) \cap \Omega_{\delta}$ and the boundedness below of the function $\Phi_{\lambda}$ using various
information about the functions $f$, $\theta$, $\zeta$, and the set of admissible control inputs $U$. In the first part
of this proposition we suppose that the set $U$ is bounded in $L_{\infty}^m(0, T)$, which, in essence, allows us not to
impose any assumptions on the behaviour of the functions $u \mapsto f(x, u, t)$ and $u \mapsto \theta(x, u, t)$. If $U$
is bounded only in $L_q^m(0, T)$, then appropriate growth conditions on these functions must be imposed to prove the
required result (see part two of Proposition~\ref{Proposition_LevelBoundness}). Finally, if no information about 
the set $U$ is available, one must impose some assumptions that ensure the coercivity of the functional 
$\mathcal{I}(x, u)$, as it is done in the last part of Proposition~\ref{Proposition_LevelBoundness}. 
\end{remark}

\section{Exact Penalty Functions for Free-Endpoint Differential Inclusions}
\label{Section_DiffIncl}

Let us extend the main results of the previous section to the case of variational problems involving differential
inclusions. Consider the following problem:
\begin{equation} \label{DiffInclProblem}
\begin{split}
  {}&\min \: \mathcal{I}(x) = \int_0^T \theta(x(t), t) \, dt + \zeta(x(T)), \\
  {}&\text{subject to } \dot{x}(t) \in F(x(t), t), \quad t \in [0, T], \quad x(0) = x_0.
\end{split}
\end{equation}
Here $\theta \colon \mathbb{R}^d \times [0, T] \to \mathbb{R}$ and $\zeta \colon \mathbb{R}^d \to \mathbb{R}$ are given
functions, $F \colon \mathbb{R}^d \times [0, T] \rightrightarrows \mathbb{R}^d$ is a set-valued mapping with nonempty
compact convex values, $T > 0$ and $x_0 \in \mathbb{R}^d$ are fixed, and $x \in W^d_{1,p}(0, T)$.

\begin{remark}
Although problem \eqref{DiffInclProblem} does not include control inputs, it encompasses many optimal control problems,
including some optimal feedback control problems, which cannot be tackled with the use of the approach presented in 
the previous section. For example, the system
$$
  \dot{x}(t) = f(x(t), u(t), t), \quad u(t) \in U(x(t), t),
$$
where $U \colon \mathbb{R}^d \times [0, T] \rightrightarrows \mathbb{R}^m$ is a multifunction, can be rewritten as 
the differential inclusion $\dot{x}(t) \in F(x(t), t)$ with $F(x, t) = f(x, U(x, t), t)$, which allows one to reduce
optimal control problems involving such systems to problem \eqref{DiffInclProblem}. Thus, the results of this section
have many direct applications to various optimal control problems.
\end{remark}

Let us introduce a penalty function for problem \eqref{DiffInclProblem}. Define $X = W^d_{1,p}(0, T)$, and put
$$
  M = \Big\{ x \in X \Bigm| \dot{x}(t) \in F(x(t), t) \: \text{for a.e. } t \in [0, T] \Big\},
$$
and $A = \{ x \in X | x(0) = x_0 \}$ (note that this set is obviously closed). Then problem \eqref{DiffInclProblem} can
be rewritten as follows:
$$
  \min_{x \in X} \: \mathcal{I}(x) \quad \text{subject to} \quad x \in M \cap A.
$$
In order to introduce a penalty term $\varphi(x)$, denote $S = \{ \psi \in \mathbb{R}^d \mid |\psi| = 1 \}$, and for any
convex set $Y \subset \mathbb{R}^d$ and $\psi \in \mathbb{R}^d$ denote by 
$s(Y, \psi) = \sup_{y \in Y} \langle y, \psi \rangle$ the \textit{support function} of $Y$. 
By \cite[Theorem~13.1]{Rockafellar} a function $x \in X$ satisfies the differential 
inclusion $\dot{x}(t) \in F(x(t), t)$ iff
\begin{equation} \label{DiffInclViaSupportFunction}
  \langle \dot{x}(t), \psi \rangle \le s(F(x(t), t), \psi) \quad \forall \psi \in S \text{ for a.e. } t \in [0, T]
\end{equation}
or equivalently iff $h(x(t), \dot{x}(t), t) = 0$ for a.e. $t \in [0, T]$, where
\begin{equation} \label{DiffIncl_PenaltyTermIntegrand}
  h(x, z, t) = \max_{\psi \in S}\max\big\{ 0, \langle z, \psi \rangle - s(F(x, t), \psi) \big\}.
\end{equation}
Note that the maximum over all $\psi \in S$ in the definition of $h(x, z, t)$ is achieved, since the mapping 
$\psi \to s(F(x, t), \psi)$ is continuous, which, in turn, follows from the fact that $F(x, t)$ is a compact set.

Observe that $h(x, z, t) = \dist(z, F(x, t))$ for all $z, x \in \mathbb{R}^d$ and $t \in [0, T]$. Indeed,
for any $\psi \in S$ and $y \in F(x, t)$ one has 
$$
  \langle z, \psi \rangle - s(F(x, t), \psi) \le 
  \langle z, \psi \rangle - \langle y, \psi \rangle \le |z - y|.
$$
Taking the minimum over all $y \in F(x, t)$ one gets that
$$
  \langle z, \psi \rangle - s(F(x, t), \psi) \le \inf_{y \in F(x, t)} |z - y| =: \dist(z, F(x, t)).
$$
Consequently, $\max\{ 0, \langle z, \psi \rangle - s(F(x, t), \psi) \} \le \dist(z, F(x, t))$ by virtue of the fact
that $\dist(z, F(x, t)) \ge 0$. Hence taking the maximum over all $\psi \in S$ one obtains that 
$h(x, z, t) \le \dist(z, F(x, t))$. Clearly, this inequality turns into an equality when $z \in F(x, t)$.
Moreover, in the case $z \notin F(x, t)$ from the necessary conditions for a minimum \eqref{NessMinCond_OverConvexSet}
with $g(x) = |x|$ and $K = \{ z - y \mid y \in F(x, t) \}$ it follows that for $\psi^* = (z - y^*) / |z - y^*|$, where
$y^* \in F(x, t)$ is such that $\dist(z, F(x, t)) = |z - y^*|$, one has
$$
  \langle z - y, -\psi^* \rangle \le - | z - y^* | = - \dist(z, F(x, t)) \quad \forall y \in F(x, t)
$$
Taking the maximum over all $y \in F(x, t)$ one gets that 
$s(F(x, t), \psi^*) - \langle z, \psi^* \rangle \le - \dist(z, F(x, t))$, which obviously implies that
$h(x, z, t) \ge \dist(z, F(x, t))$. Thus, one has $h(x, z, t) = \dist(z, F(x, t))$ for all $z, x \in \mathbb{R}^d$ and
$t \in [0, T]$. 

Now one can formally define
$$
  \varphi(x) = \| h(x(\cdot), \dot{x}(\cdot), \cdot) \|_p = \left( \int_0^T 
  \max_{\psi \in S}\max\big\{ 0, \langle \psi, \dot{x}(t) \rangle - s(F(x(t), t), \psi) \big\}^p dt
  \right)^{\frac{1}{p}}.
$$
Clearly, $M = \{ x \in X \mid \varphi(x) = 0 \}$, which implies that one can consider the penalised problem
$$
  \min_{x \in X} \: \Phi_{\lambda}(x) = \mathcal{I}(x) + \lambda \varphi(x) \quad
  \text{subject to} \quad x \in A.
$$
Our aim is to provide sufficient conditions for the penalty function $\Phi_{\lambda}$ to be completely exact. For the
sake of simplicity, below we analyse only the simplest case when the support function $s(F(x, t), \psi)$ is
differentiable in $x$ for all $x \in \mathbb{R}^d$, $\psi \in S$, and $t \in [0, T]$. 

Before we can proceed to the theorem on the exactness of $\Phi_{\lambda}$, we need to obtain an auxiliary result on the
differentiability of the penalty term $\varphi$. Denote by $\psi^*(x, z, t)$ a vector $\psi \in S$ at which the maximum
in the definition of $h(x, z, t)$ is attained in the case $h(x, z, t) > 0$, and define $\psi^*(x, z, t) = \psi_0$
otherwise, where $\psi_0 \in S$ is a fixed vector. Note that in the case $h(x, z, t) > 0$ such $\psi^*(x, z, t)$ is
unique. Indeed, if the maximum is attained for $\psi = \psi_1 \in S$ and $\psi = \psi_2 \in S$ with $\psi_1 \ne \psi_2$,
then by applying the fact that the function $h_0(\psi) = \langle z, \psi \rangle - s(F(x, t), \psi)$ is concave one
obtains that $h_0(\xi) \ge 0.5 h_0(\psi_1) + 0.5 h_0(\psi_2)$, where 
$\xi = 0.5 \psi_1 + 0.5 \psi_2$. Note that $h_0(\psi_1) = h_0(\psi_2) = h(x, z, t)$ by the fact that $h(x, z, t) > 0$;
furthermore, $|\xi| < 1$, since the space $\mathbb{R}^d$ endowed with the Euclidean norm is strictly convex. Hence
taking into account the fact that the function $h_0$ is positively homogeneous of degree one we obtain that
$$
  h(x, z, t) \ge h_0\left( \frac{\xi}{|\xi|} \right) = 
  \frac{1}{|\xi|} h_0(\xi) > h_0(\xi) \ge 0.5 h_0(\psi_1) + 0.5 h_0(\psi_2) = h(x, z, t),
$$
which is impossible. Thus, $\psi^*(x, z, t)$ is well-defined in the case $h(x, z, t) > 0$.

\begin{proposition} \label{Prp_DiffIncl_PenTermDerivative}
Let the multifunction $F(x, t)$ be continuous on $\mathbb{R}^d \times [0, T]$, its support function $s(F(x, t), \psi)$
be differentiable in $x$, and the function $(x, t, \psi) \mapsto \nabla_x s(F(x, t), \psi)$ be continuous on
$\mathbb{R}^d \times [0, T] \times S$. Then the penalty term $\varphi(x)$ is correctly defined and finite on $X$,
G\^{a}teaux differentiable at every point $x \in W^d_{1,p}(0, T)$ such that $\varphi(x) > 0$, and
$$
  \varphi'(x)[v] = \frac{1}{\varphi(x)^{p - 1}} 
  \int_0^T h(x(t), \dot{x}(t), t)^{p - 1}
  \Big( \langle \psi^*(x, t), \dot{v}(t) \rangle
  - \left\langle \nabla_x s(F(x(t), t), \psi^*(x, t)), v(t) \right\rangle \Big) \, dt,
$$
where $\psi^*(x, t) = \psi^*(x(t), \dot{x}(t), t)$ for a.e. $t \in [0, T]$.
\end{proposition}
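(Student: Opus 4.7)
The plan is to apply a Danskin-type envelope theorem pointwise in $t$, justify interchange of differentiation and integration by dominated convergence, and then invoke the chain rule for the $L^p$-norm.

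First, I would verify that $\varphi(x)$ is well-defined and finite on $X$. Since $F$ is continuous with compact convex values, the support function $s(F(x,t),\psi)$ is jointly continuous in $(x,t,\psi)$; hence $h(x,z,t)$, being the maximum of continuous functions over the compact set $S$ followed by $\max\{0,\cdot\}$, is jointly continuous on $\mathbb{R}^d\times\mathbb{R}^d\times[0,T]$. Measurability of $t \mapsto h(x(t),\dot x(t),t)$ follows. For integrability, I use the identity $h(x,z,t)=\dist(z,F(x,t))$ already proved in the excerpt: because $x(\cdot)$ is continuous on $[0,T]$ (from the embedding $W^d_{1,p}(0,T)\hookrightarrow C[0,T]$ noted in Remark~\ref{Remark_EquivalentNorms}) and $F$ is continuous with compact values, there is $M>0$ such that $|y|\le M$ for all $y\in F(x(t),t)$ and $t\in[0,T]$; then $h(x(t),\dot x(t),t)\le|\dot x(t)|+M\in L^p(0,T)$.

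Now fix $x\in X$ with $\varphi(x)>0$ and a direction $v\in X$, and consider $\phi(\alpha):=\varphi(x+\alpha v)^p=\int_0^T H_\alpha(t)^p\,dt$, where $H_\alpha(t)=h(x(t)+\alpha v(t),\dot x(t)+\alpha\dot v(t),t)$. For the pointwise derivative, on the measurable set $E:=\{t:H_0(t)>0\}$ the outer $\max\{0,\cdot\}$ is locally inactive and, by the uniqueness of $\psi^*(x(t),\dot x(t),t)$ established in the excerpt together with continuity of $\nabla_x s$, Danskin's theorem applies to the inner maximum over the compact set $S$: $h$ is Fréchet differentiable in $(x,z)$ at $(x(t),\dot x(t))$ with partials $\bigl(-\nabla_x s(F(x(t),t),\psi^*),\psi^*\bigr)$, giving
$$
  \lim_{\alpha\to 0}\frac{H_\alpha(t)-H_0(t)}{\alpha}=\langle\psi^*(x(t),\dot x(t),t),\dot v(t)\rangle-\langle\nabla_x s(F(x(t),t),\psi^*(x(t),\dot x(t),t)),v(t)\rangle.
$$
On the complement $[0,T]\setminus E$ the factor $H_0(t)^{p-1}$ vanishes, so this set contributes nothing to the limiting integral.

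Finally, to interchange limit and integral I would apply the mean value theorem, $(H_\alpha^p-H_0^p)/\alpha=p\widetilde H_{\alpha,t}^{\,p-1}(H_\alpha-H_0)/\alpha$ with $\widetilde H_{\alpha,t}$ between $H_\alpha(t)$ and $H_0(t)$, and produce an $L^1$-dominant. Using that $\dist(\cdot,F(x,t))$ is $1$-Lipschitz in $z$ and that $s(F(\cdot,t),\psi)$ is locally Lipschitz in $x$ uniformly in $\psi\in S$ and $t\in[0,T]$ (by continuity of $\nabla_x s$ and boundedness of $x+\alpha v$ in $C[0,T]$ for $|\alpha|\le 1$), I would obtain $|(H_\alpha-H_0)/\alpha|\le|\dot v(t)|+L|v(t)|$ and $\widetilde H_{\alpha,t}\le|\dot x(t)|+|\dot v(t)|+M'$ for small $\alpha$, whose product lies in $L^1(0,T)$ by Hölder's inequality since $(|\dot x|+|\dot v|+M')^{p-1}\in L^{p'}$ and $|\dot v|+L|v|\in L^p$. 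Dominated convergence then yields $\phi'(0)=p\int_0^T H_0^{p-1}[\cdots]\,dt$, and the chain rule $\varphi'(x)[v]=(p\varphi(x)^{p-1})^{-1}\phi'(0)$ delivers the stated formula; linearity in $v$ is visible in the formula, so this is indeed a Gâteaux derivative. The main obstacle I anticipate is precisely securing this $L^1$-dominant near $\{H_0=0\}$, where the pointwise limit is $0$ but the difference quotient need not be, and ensuring the local Lipschitz constant $L$ (furnished by continuity, not boundedness, of $\nabla_x s$) is controlled uniformly over the compact range of $x+\alpha v$ for $|\alpha|\le 1$.
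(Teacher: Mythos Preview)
Your proposal is correct and follows essentially the same route as the paper: pointwise differentiation of $h^p$ via a Danskin-type argument where $h>0$, vanishing contribution where $h=0$ thanks to $p>1$ and the local Lipschitz continuity of $h$, an $L^1$ dominant built from the Lipschitz bound on $h$ in $(x,z)$ together with the $L^{p'}\times L^p$ H\"older pairing, and finally the chain rule for the $L^p$-norm. The only cosmetic difference is that the paper applies the mean value theorem directly to the map $(x,z)\mapsto h^p(x,z,t)$ after first verifying that $\nabla_{(x,z)} h^p$ is globally continuous, whereas you apply the scalar mean value theorem to $s\mapsto s^p$ and bound the Lipschitz quotient of $h$ separately; both organizations lead to the same dominant and the same limit.
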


\begin{proof}
From the continuity of $F(x, t)$ it obviously follows that the function $h(x, z, t)$ is continuous
(see~\eqref{DiffIncl_PenaltyTermIntegrand}), which implies that the function $h(x(\cdot), \dot{x}(\cdot), \cdot)$ is
measurable for any $x \in W^d_{1, p}(0, T)$. Hence by applying the inequality
$$
  |h(x, z, t)|^p \le \big| |z| + \max_{\psi \in S} |s(F(x, t), \psi)| \big|^p \le 2^p |z|^p 
  + 2^p \max_{\psi \in S} |s(F(x, t), \psi)|^p
$$
one gets that the penalty term $\varphi(x)$ is correctly defined and finite on $X$. Let us check that the functional 
$$
  \varphi_0(x) = \varphi(x)^p = \int_0^T h(x(t), \dot{x}(t), t)^p \, dt
$$
is G\^{a}teaux differentiable and compute its derivative. Then by applying the chain rule one obtains the required
result.

Bearing in mind the facts that the support function $s(F(x, t), \psi)$ is differentiable in $x$, and 
the function $(x, t, \psi) \mapsto \nabla_x s(F(x, t), \psi)$ is continuous, and utilising a generalisation of 
the Danskin-Demyanov theorem (see~\cite[Theorem~4.13]{BonnansShapiro}) one obtains that the function
$h^p(x, z, t)$ is G\^{a}teaux differentiable in $x$ and $z$ at any point $(x, z, t)$ such that $h(x, z, t) > 0$, and
\begin{equation} \label{DistToInclGrad}
\begin{split}
  \nabla_x h^p(x, z, t) &= - p h(x, z, t)^{p - 1} \nabla_x s(F(x, t), \psi^*(x, z, t)), \\
  \nabla_z h^p(x, z, t) &= p h(x, z, t)^{p - 1} \psi^*(x, z, t).
\end{split}
\end{equation}
Let us consider the case $h(x, z, t) = 0$. Note that the function $x \mapsto s(F(x, t), \psi)$ is locally Lipschitz
continuous with the same Lipschitz constant for all $t \in [0, T]$ and $\psi \in S$, since its derivative in $x$ is
continuous. Hence, as it is easy to check, the function  $(x, z) \to h(x, z, t)$ is locally Lipschitz continuous with
the same Lipschitz constant for any $t \in [0, T]$. Utilising this fact and the inequality $p > 1$ one obtains that if
$(x, z, t)$ is such that $h(x, z, t) = 0$, then the function $h^p(x, z, t)$ is G\^{a}teaux differentiable in $x$ and $z$
as well, and $\nabla_x h^p(x, z, t) = 0$ and $\nabla_z h^p(x, z, t) = 0$, since
$$
  \big| h^p(x + \Delta x, z + \Delta z, t) - h^p(x, z, t) | \le L^p \big( |\Delta x| + |\Delta z| \big)^p
$$ 
for any $\Delta x$ and $\Delta z$ in a neighbourhood of zero, and for some $L > 0$. Moreover, with the use of the facts
that the mapping $\psi^*(\cdot)$ is continuous on the open set $\{ (x, z, t) \mid h(x, z, t) > 0 \}$ by
\cite[Proposition~4.4]{BonnansShapiro} and $|\psi^*(\cdot)| \equiv 1$ one can easily check that the functions 
$\nabla_x h^p(\cdot)$ and $\nabla_z h^p(\cdot)$ are continuous on $\mathbb{R}^d \times \mathbb{R}^d \times [0, T]$.

Fix $x, v \in W^d_{1, p}(0, T)$. By the mean value theorem for any
$\alpha \in (0, 1]$ and for a.e. $t \in (0, T)$ there exists $\alpha(t) \in (0, \alpha)$ such that
\begin{multline*}
  \frac{1}{\alpha} \Big( h^p(x(t) + \alpha v(t), \dot{x}(t) + \alpha \dot{v}(t), t) - h^p(x(t), \dot{x}(t), t) \Big) \\
  = \langle \nabla_x h^p(x(t) + \alpha(t) v(t), \dot{x}(t) + \alpha(t) \dot{v}(t), t), v(t) \rangle
  + \langle \nabla_z h^p(x(t) + \alpha(t) v(t), \dot{x}(t) + \alpha(t) \dot{v}(t), t), \dot{v}(t) \rangle
\end{multline*}
for a.e. $t \in [0, T]$. The right hand side of this equality converges to
$$
  \langle \nabla_x h^p(x(t), \dot{x}(t), t), v(t) \rangle
  + \langle \nabla_z h^p(x(t), \dot{x}(t), t), \dot{v}(t) \rangle
$$
as $\alpha \to + 0$ for a.e. $t \in [0, T]$ due to the continuity of $\nabla_x h^p(\cdot)$ and $\nabla_z h^p(\cdot)$.
Taking into account the obvious inequalities
\begin{gather*}
  |\nabla_x h^p(x, z, t)| \le p |h(x, z, t)|^{p - 1} | \nabla_x s(F(x, t), \psi^*(x, z, t)) |, \qquad
  |\nabla_z h^p(x, z, t)| \le p |h(x, z, t)|^{p - 1}, \\
  |h(x, z, t)|^{p - 1} \le \big| |\langle z, \psi^*(x, z, t) \rangle| + |s(F(x, t), \psi^*(x, z, t))| \big|^{p - 1}
  \le 2^{p - 1} \big( |z|^{p - 1} + \max_{\psi \in S} |s(F(x, t), \psi)|^{p - 1} \big),
\end{gather*}
and the fact that $\| x \|_{\infty} \le C \| x \|_{1, d}$ for some $C > 0$ (see Remark~\ref{Remark_EquivalentNorms}) 
one obtains that there exist $C_1, C_2 > 0$ such that
$$
  |\langle \nabla_x h^p(x(t) + \alpha(t) v(t), \dot{x}(t) + \alpha(t) \dot{v}(t), t), v(t) \rangle|
  \le (C_1 |\dot{x}(t)|^{p - 1} + C_1 |\dot{v}(t)|^{p - 1} + C_2) |v(t)|,
$$
and
$$
  |\langle \nabla_z h^p(x(t) + \alpha(t) v(t), \dot{x}(t) + \alpha(t) \dot{v}(t), t), \dot{v}(t) \rangle|
  \le (C_1 |\dot{x}(t)|^{p - 1} + C_1 |\dot{v}(t)|^{p - 1} + C_2) |\dot{v}(t)|
$$
for a.e. $t \in [0, T]$ and for any $\alpha \in (0, 1]$. Note that the right-hand sides of these inequalities belong to
$L^1(0, T)$. Therefore, by applying Lebesgue's dominated convergence theorem one gets that
$$
  \lim_{\alpha \to + 0} \frac{\varphi_0(x + \alpha v) - \varphi_0(x)}{\alpha}
  = \int_0^T \Big( \langle \nabla_x h^p(x(t), \dot{x}(t), t), v(t) \rangle
  + \langle \nabla_z h^p(x(t), \dot{x}(t), t), \dot{v}(t) \rangle \Big) \, dt,
$$
which along with \eqref{DistToInclGrad} completes the proof.
\end{proof}

Let $\mathcal{I}^*$ be the optimal value of problem \eqref{DiffInclProblem}.

\begin{theorem}
Let the following assumptions be valid:
\begin{enumerate}
\item{$\zeta$ is locally Lipschitz continuous, $\theta(x, t)$ is continuous, differentiable in $x$, and its derivative
in $x$ is continuous;
\label{Assumpt_DIP_ObjectiveFunction}}

\item{the multifunction $F(x, t)$ is continuous, its support function $s(F(x, t), \psi)$ is differentiable in $x$, and
the function $(x, t, \psi) \mapsto \nabla_x s(F(x, t), \psi)$ is continuous;
\label{Assumpt_DIP_DiffIncl}}

\item{there exists a globally optimal solution of problem \eqref{DiffInclProblem};}

\item{there exist $\lambda_0 > 0$, $\delta > 0$, and $c > \mathcal{I}^*$ such that the set 
$S_{\lambda_0}(c) \cap \Omega_{\delta}$ is bounded in $W^d_{1, p}(0, T)$, and the function $\Phi_{\lambda_0}(x)$ is
bounded below on $A$.
\label{Assumpt_DIP_SublevelBound}}
\end{enumerate}
Then there exists $\lambda^* \ge 0$ such that for any $\lambda \ge \lambda^*$ the penalty function $\Phi_{\lambda}$ for
problem~\eqref{DiffInclProblem} is completely exact on $S_{\lambda}(c)$.
\end{theorem}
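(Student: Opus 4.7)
The plan is to apply Theorem~\ref{THEOREM_COMPLETEEXACTNESS} in close analogy with the proof of Theorem~\ref{Thrm_FreeEndPointProblem}. First I would verify the general hypotheses of the Main Theorem: the space $X = W^d_{1,p}(0,T)$ is complete, $A$ is closed, and $\Phi_{\lambda_0}$ is bounded below on $A$ by assumption~\ref{Assumpt_DIP_SublevelBound}. Using the continuity and differentiability assumptions on $\theta$, the results of Appendix~B applied with a trivial control variable, the embedding $W^d_{1,p}(0,T) \hookrightarrow L_\infty^d(0,T)$ from Remark~\ref{Remark_EquivalentNorms}, and the local Lipschitz continuity of $\zeta$, the functional $\mathcal{I}$ is correctly defined and Lipschitz continuous on every bounded subset of $X$, in particular on a bounded open neighbourhood of $S_{\lambda_0}(c) \cap \Omega_\delta$. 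The penalty term $\varphi$ is correctly defined, finite, and G\^{a}teaux differentiable on $\{\varphi > 0\}$ by Proposition~\ref{Prp_DiffIncl_PenTermDerivative}; its continuity on all of $X$ (hence at every point of $\Omega$) follows from Lebesgue dominated convergence via the continuity of $h(x,z,t) = \dist(z, F(x,t))$ and the domination $h(x,z,t)^p \le 2^p(|z|^p + \sup_{\psi \in S}|s(F(x,t),\psi)|^p)$.

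The main work is verifying assumption~\ref{NegativeDescentRateAssumpt} of Theorem~\ref{THEOREM_COMPLETEEXACTNESS}, i.e.\ producing $a > 0$ with $\varphi^{\downarrow}_A(x) \le -a$ for every $x \in S_{\lambda_0}(c) \cap (\Omega_\delta \setminus \Omega)$. Following Demyanov's ``transition into the space of derivatives,'' I would introduce the bijection $J\colon L_p^d(0,T) \to A$ defined by $(Jz)(t) = x_0 + \int_0^t z(\tau)\,d\tau$, which satisfies $\|Jz - Jw\|_{1,p} \le (1+T)\|z-w\|_p$ by H\"{o}lder's inequality. Setting $\gamma(z) = \varphi(Jz)$, the estimate $(1+T)\,\varphi^{\downarrow}_A(Jz) \le \gamma^{\downarrow}(z)$ reduces matters to establishing a uniform bound $\gamma^{\downarrow}(z) \le -a'$ whenever $z$ lies in a bounded subset of $L_p^d(0,T)$ with $\gamma(z) > 0$.

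To this end I would apply Proposition~\ref{Prp_DiffIncl_PenTermDerivative} with the variation $v(t) = \int_0^t \eta(\tau)\,d\tau$, so that $\dot v = \eta$ and $v(0) = 0$. Setting $x = Jz$, $\psi^*(t) = \psi^*(x(t),\dot x(t),t)$,
$$
  \widetilde w(t) = \frac{h(x(t),\dot x(t),t)^{p-1}}{\varphi(x)^{p-1}}, \qquad
  w(t) = \widetilde w(t)\,\psi^*(t), \qquad
  \beta(t) = \nabla_x s\bigl(F(x(t),t),\psi^*(t)\bigr)\,\psi^*(t)^T,
$$
the identity $\widetilde w(t)\,\nabla_x s(F(x(t),t),\psi^*(t)) = \beta(t)\,w(t)$ together with Fubini's theorem yields
$$
  \gamma'(z)[\eta] = \int_0^T \Bigl\langle w(t) - \int_t^T \beta(\tau)\,w(\tau)\,d\tau,\;\eta(t) \Bigr\rangle dt = \int_0^T \langle (I - \mathcal{K}_\beta)\,w(t),\,\eta(t)\rangle\,dt,
$$
where $\mathcal{K}_\beta$ is the Volterra-type operator on $L_{p'}^d(0,T)$ with matrix kernel $\beta$. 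Since $p'(p-1) = p$, one has $\|w\|_{p'} = 1$, whence
$$
  \gamma^{\downarrow}(z) \le -\|\gamma'(z)\| = -\|(I - \mathcal{K}_\beta)\,w\|_{p'} \le -\frac{1}{\|(I - \mathcal{K}_\beta)^{-1}\|}.
$$

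For the final step, I would observe that when $z$ ranges over a bounded subset of $L_p^d(0,T)$, the trajectories $x = Jz$ are uniformly bounded in $L_\infty^d(0,T)$, and since $\nabla_x s(F(\cdot,\cdot),\cdot)$ is continuous on $\mathbb{R}^d \times [0,T] \times S$ while $|\psi^*(\cdot)| \equiv 1$, the family of kernels $\{\beta\}$ is uniformly bounded in $L_\infty^{d\times d}(0,T)$, a fortiori in $L_p^{d\times d}(0,T)$. Lemma~\ref{Lemma_BoundedResolvent} then delivers a single $a' > 0$ with $\|(I - \mathcal{K}_\beta)^{-1}\| \le 1/a'$ across the whole family, completing the verification. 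The principal technical obstacle is managing the measurability and integrability of $\psi^*$ in the integration-by-parts step: measurability of $t \mapsto \psi^*(t)$ on the set $\{\varphi(x) > 0\}$ follows from the continuity of $\psi^*$ established inside the proof of Proposition~\ref{Prp_DiffIncl_PenTermDerivative}, and once $\beta$ is known to be essentially bounded and $w \in L_{p'}^d(0,T)$, the interchange of integrals is justified.
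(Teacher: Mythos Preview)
Your proposal is correct and follows the same overall architecture as the paper: verify the hypotheses of Theorem~\ref{THEOREM_COMPLETEEXACTNESS} (Lipschitz continuity of $\mathcal{I}$ on bounded sets, continuity of $\varphi$, boundedness assumptions), then use the transition into the space of derivatives and Proposition~\ref{Prp_DiffIncl_PenTermDerivative} to estimate $\gamma^{\downarrow}(z)$ via the G\^{a}teaux derivative, finishing with Lemma~\ref{Lemma_BoundedResolvent}.

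The one genuine difference lies in how you pass from the derivative formula to a Volterra equation. The paper writes the derivative as $\gamma'(z)[\eta] = \int_0^T \langle H(z)(t), \eta(t)\rangle\,dt$ with
\[
  H(z)(t) = \widetilde w(t)\,\psi^*(t) - \int_t^T \widetilde w(\tau)\,\nabla_x s\bigl(F(x(\tau),\tau),\psi^*(\tau)\bigr)\,d\tau,
\]
but does not immediately recognise $H(z)$ itself as a Volterra image of a single vector. Instead it argues by contradiction: if $\|H(z_n)\|_{p'}\to 0$, then pairing pointwise with $\psi_n(t)$ (using $|\psi_n|\equiv 1$) produces a \emph{scalar} Volterra expression $(I-\mathcal{K}_{y_n})\widetilde w_n$ with kernel $y_n(t,s)=\langle \nabla_x s(F(x_n(s),s),\psi_n(s)),\psi_n(t)\rangle$, to which Lemma~\ref{Lemma_BoundedResolvent} applies. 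Your rank-one trick $\widetilde w(t)\,\nabla_x s(\cdot) = \beta(t)\,w(t)$ with $\beta(t)=\nabla_x s(\cdot)\,\psi^*(t)^T$ and $w(t)=\widetilde w(t)\psi^*(t)$ rewrites $H(z)$ directly as $(I-\mathcal{K}_\beta)w$ for the \emph{vector} $w$ with a matrix kernel depending only on $s$; since $\|w\|_{p'}=1$ and Lemma~\ref{Lemma_BoundedResolvent} is stated for $\mathbb{R}^{d\times d}$-valued kernels, you bypass both the contradiction argument and the auxiliary pairing. The paper's detour has the minor advantage of reducing to a scalar equation, but your route is shorter and equally rigorous.
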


\begin{proof}
Note that the functional $\mathcal{I}(x)$ is Lipschitz continuous on any bounded subset of $W^d_{1, p}(0, T)$ by 
Proposition~\ref{Prp_LipschitzConditions}, Remark~\ref{Remark_EquivalentNorms}, and the fact that $\zeta$ is locally
Lipschitz continuous. Furthermore, taking into account the obvious estimate
$$
  |h(x, z, t)|^p \le \max_{\psi \in S} \big| \langle z, \psi \rangle - s(F(x, t), \psi) \big|^p
  \le 2^p \big( |z|^p + \max_{\psi \in S} |s(F(x, t), \psi)|^p \big),
$$
and utilising Vitali's convergence theorem (see~\cite[Theorem~III.6.15]{DunfordSchwartz}) one can easily check that 
the operator $x \to h(x(\cdot), \dot{x}(\cdot), \cdot)$ continuously maps $W^d_{1, p}(0, T)$ to $L^p(0, T)$ 
(cf.~\cite[Theorem~5.1]{Precup}), i.e. the penalty term $\varphi$ is continuous. Thus, by
Theorem~\ref{THEOREM_COMPLETEEXACTNESS} it remains to check that for any bounded
set $K \subset A$ there exists $a > 0$ such that $\varphi^{\downarrow}_A(x) \le - a$ for any 
$x \in K \setminus \Omega$.

By applying the technique of ``transition into the space of derivatives'' as in the proof of
Theorem~\ref{Thrm_FreeEndPointProblem} one obtains that it is sufficient to check that for any bounded 
set $Z \subset L^d_p(0, T)$ there exists $a > 0$ such that $\gamma^{\downarrow}(z) \le - a$ for all $z \in Z$ with
$\gamma(z) > 0$, where
$\gamma(z) = \varphi(Jz)$ and $(J z)(t) = x_0 + \int_0^t z(\tau) \, d \tau$.
Let $Z \subset L^d_p(0, T)$ be a bounded set. Utilising Proposition~\ref{Prp_DiffIncl_PenTermDerivative} and
integrating by parts one gets that for any $z \in Z$ such that $\gamma(z) > 0$ the functional $\gamma(\cdot)$ is
G\^{a}teaux differentiable at $z$, and
$$
  \gamma'(z) [v]
  = \int_0^T \Big\langle w(z)(t) \psi^*(x, t) 
  - \int_t^T w(z)(\tau) \nabla_x s\big( F(x(\tau), \tau), \psi^*(x, \tau) \big) \, d \tau,
  v(t) \Big\rangle \, dt,
$$
where $x = Jz$ and $w(z)(t) = \gamma(z)^{1 - p} h(x(t), z(t), t)^{p - 1}$. Therefore, for any such $z$ one has
$\gamma^{\downarrow}(z) \le - \| \gamma'(z) \| = - \| H(z) \|_{p'} \le 0$, where
\begin{equation} \label{IntegralOperator_DiffIncl}
  H(z)(t) = 
  w(z)(t) \psi^*(x, t) - \int_t^T w(z)(\tau) \nabla_x s\big( F(x(\tau), \tau), \psi^*(x, \tau) \big) \, d \tau.
\end{equation}
Arguing by reductio ad absurdum, suppose that there exists a sequence $\{ z_n \} \subset Z$ such that 
$\gamma(z_n) > 0$ for all $n \in \mathbb{N}$, and $\gamma^{\downarrow}(z_n) \to 0$ as $n \to \infty$. Then 
$\| H(z_n) \|_{p'} \to 0$ as $n \to \infty$ as well. 

For any $n \in \mathbb{N}$ define $\psi_n(\cdot) = \psi^*(x_n, \cdot) = \psi^*( x_n(\cdot), z_n(\cdot), \cdot)$, where 
$x_n = J z_n$. Let us check that these functions are measurable. From the continuity of the multifunction $F(x, t)$ it
follows that the function $h(x, z, t)$ is continuous as well. Therefore, the function 
$h(x_n(\cdot), z_n(\cdot), \cdot)$ is measurable, which implies that the set 
$E_n = \{ t \in [0, T] \mid h(x_n(t), z_n(t), t) > 0 \}$ is measurable. As was pointed out above, the function
$\psi^*(x, z, t)$ is continuous on the open set $V = \{ (x, z, t) \mid h(x, z, t) > 0 \}$ by
\cite[Proposition~4.4]{BonnansShapiro}. Consequently, the function $E_n \ni t \mapsto \psi^*(x_n(t), z_n(t), t)$ is
measurable as the composition of the restriction of $\psi^*(x, z, t)$ to $V$ and the measurable mapping
$E_n \ni t \to (x_n(t), z_n(t), t)$. Hence one obtains that
$$
  \psi_n(t) = \begin{cases}
    \psi^*(x_n(t), z_n(t), t), & \text{if } t \in E_n, \\
    \psi_0, & \text{if } t \in [0, T] \setminus E_n.
  \end{cases}
$$
is measurable (recall that $\psi_0 \in S$ is a fixed vector; see the discussion before
Proposition~\ref{Prp_DiffIncl_PenTermDerivative}).

Recall that $\| H(z_n) \|_{p'} \to 0$ as $n \to \infty$. Hence and from the fact that by definition 
$|\psi_n(\cdot)| \equiv 1$ it follows that $\| \langle \psi_n, H(z_n) \rangle \|_{p'} \to 0$ as $n \to \infty$ as well.
On the other hand, from the equalities $\| w(z_n) \|_{p'} = 1$ and $|\psi_n(\cdot)| \equiv 1$ it follows that
$$
  \| \langle \psi_n, H(z_n) \rangle \|_{p'} = \| (I - \mathcal{K}_{y_n}) w(z_n) \|_{p'} 
  \ge \frac{1}{\| (I - \mathcal{K}_{y_n})^{-1} \|}
$$
(see~\eqref{IntegralOperator_DiffIncl}), where $I$ is the identity operator and
$$
  \Big( \mathcal{K}_{y_n} h \Big)(t) = \int_t^T y_n(t, s) h(s) \, ds 
  \quad \forall h \in L^{p'}(0, T), \quad
  y_n(t, s) = \langle \nabla_x s( F(x_n(s), s), \psi_n(s)), \psi_n(t) \rangle.
$$
Observe that by Lemma~\ref{Lemma_BoundedResolvent} one has 
$\sup_{n \in \mathbb{N}} \| (I - \mathcal{K}_{y_n})^{-1} \| < + \infty$ due to the inequality
$$
  |y_n(t, s)| \le \max_{\psi \in S} \big| \nabla_x s( F(x_n(s), s), \psi ) \big|
  \quad \text{for a.e. } t, s \in [0, T]
$$
and the fact that the sequence $\{ x_n \}$ is bounded in $L_{\infty}^d(0, T)$, which in turn follows from the
boundedness of the set $Z$ and Remark~\ref{Remark_EquivalentNorms}. Thus, 
$\inf_{n \in \mathbb{N}} \| \langle \psi_n, H(z_n) \rangle \|_{p'} > 0$, which contradicts the fact that 
$\| \langle \psi_n, H(z_n) \rangle \|_{p'} \to 0$ as $n \to \infty$. Therefore, there exists $a > 0$ such that
$\gamma^{\downarrow}(z) \le - a$ for any $z \in Z$ with  $\gamma(z) > 0$, and the proof is complete.
\end{proof}

\begin{remark}
Note that under the assumptions \ref{Assumpt_DIP_ObjectiveFunction}, \ref{Assumpt_DIP_DiffIncl}, and
\ref{Assumpt_DIP_SublevelBound} of the theorem above a globally optimal solution of problem \eqref{DiffInclProblem}
exists iff there exists a feasible point of this problem, i.e. there exists an absolutely continuous solution of the
differential inclusion $\dot{x} \in F(x, t)$ starting at $x_0$ and defined on $[0, T]$. Indeed, from the existence
of a feasible point of problem \eqref{DiffInclProblem} and the inequality $c > \mathcal{I}^*$ it follows that the
sublevel set $\{ x \in \Omega \mid \mathcal{I}(x) \le c \} \subseteq S_{\lambda_0}(c) \cap \Omega_{\delta}$ is nonempty
and bounded. Therefore, there exists a bounded sequence $\{ x_n \} \subset \Omega$ such that 
$\mathcal{I}(x_n) \to \mathcal{I}^*$ as $n \to \infty$. Taking into account the reflexivity of 
the space $W^d_{1, p}(0, T)$ one obtains that there exists a subsequence $\{ x_{n_k} \}$ weakly converging to some 
$x^* \in W^d_{1, p}(0, T)$. By Remark~\ref{Remark_EquivalentNorms} one can suppose that $x_{n_k}$ converges to $x^*$
uniformly on $[0, T]$, which, as it is easily seen, implies that $\mathcal{I}(x^*) = \lim_{k \to \infty}
\mathcal{I}(x_{n_k}) = \mathcal{I}^*$ and 
$x^* \in A$. Thus, it remains to check that $x^*$ is a solution of the differential inclusion $\dot{x} \in F(x, t)$. 

From \eqref{DiffInclViaSupportFunction} and the fact that $x_n \in \Omega$ it follows that
$$
  \int_0^T \langle \dot{x}_{n_k}(t), \psi(t) \rangle \, dt \le
  \int_0^T s\big( F(x_{n_k}(t), t), \psi(t) \big) \, dt \quad \forall \psi \in L_{p'}^d(0, T).
$$
Passing to the limit as $k \to \infty$ with the use of Lebesgue's dominated convergence theorem, and the facts that
$x_{n_k}$ converges to $x^*$ uniformly in $[0, T]$ and the compact-valued multifunction $F(x, t)$ is continuous one
obtains that
$$
  \int_0^T s\big( F(x^*(t), t) - \dot{x}^*(t), \psi(t) \big) \, dt \ge 0 \quad \forall \psi \in L_{p'}^d(0, T).
$$
By Filippov's theorem (see, e.g. \cite{Filippov} or \cite[Theorem~8.2.10]{AubinFrankowska}) there exists a measurable
selection $y(t)$ of $F(x^*(t), t)$ such that for almost every $t \in [0, T]$ one has 
$s( F(x^*(t), t) - \dot{x}^*(t), \psi(t)) = \langle y(t) - \dot{x}^*(t), \psi(t) \rangle$.
Consequently, one has
$$
  \sup_{y \in \mathcal{F}(x^*)} \int_0^T \langle y(t) - \dot{x}^*(t), \psi(t) \rangle \, dt \ge 0 
  \quad \forall \psi \in L_{p'}^d(0, T),
$$
where $\mathcal{F}(x^*)$ is the set of all measurable selections of the multifunction $F(x^*(\cdot), \cdot)$. With the
use of the facts that the set-valued map $F$ is continuous and its values are compact and convex one can check that 
the set $\mathcal{F}(x^*)$ is convex, closed and bounded in $L_p^d(0,T)$, i.e. it is weakly compact in $L_p^d(0, T)$.
Hence by applying the separation theorem and the inequality above one gets that $0 \in \mathcal{F}(x^*) - \dot{x}^*$ or,
equivalently, $\dot{x}^*(t) \in F(x^*(t), t)$ for a.e. $t \in [0, T]$.

Let us finally note that a feasible point of problem \eqref{DiffInclProblem} exists, for example, if there exist 
$C > 0$ and a.e. nonnegative function $\omega \in L^p(0, T)$ such that $|v| \le C |x| + \omega(t)$ for all 
$v \in F(x, t)$, $x \in \mathbb{R}^d$ and a.e. $t \in (0, T)$ (this result follows directly from 
the Gr\"{o}nwall-Bellman inequality).
\end{remark}

\section{Conclusions}

In the first paper of our two-part study we strengthened some existing results on exact penalty functions for
optimisation problems in infinite dimensional spaces and applied them to free-endpoint optimal control problems. We
obtained simple and verifiable sufficient conditions for the complete exactness of penalty functions for such problems
with the use of a number of auxiliary results on integral functionals and Nemytskii operators proved in this paper. Our
results allow one to reduce free-endpoint point optimal control problems (including such problems involving
differential inclusions) to equivalent variational problems and, thus, apply numerical methods of nonsmooth optimisation
to such optimal control problems.

The equivalent variational problems obtained with the use of exact penalty functions can be discretised directly and
then solved with the use of many modern methods of nonsmooth optimisation
(see~\cite{KarmitsaBagriovMakela2012,BagirovKarmitsaMakela_book} for a comparative analysis of existing nonsmooth
optimisation software). Alternatively, they can be solved via continuous methods, such as the method of
hypodifferential descent (see~\cite{FominyhKarelin2018} for more details and numerical examples) and an infinite
dimensional version of bundle methods \cite{Outrata83,OutrataSchindler,Outrata88}. Furthermore, one can solve the
equivalent variational problems via smooth optimisation methods (both continuous and based on discretisation) by
applying smoothing approximations of nonsmooth penalty functions \cite{Pinar,Liu,LiuzziLucidi,Lian,Dolgopolik_ExPen_II}
or by replacing these problems with equivalent problems of minimising the smooth penalty function proposed by Huyer
and Neumaier \cite{Dolgopolik_ExPen_II,HuyerNeumaier,WangMaZhou,Dolgopolik_SmoothExPen}.

Thus, our results pave the way for a comparative analysis of various nonsmooth optimisation methods for solving
optimal control problems, as well as for a comparative analysis of smooth and nonsmooth approaches to the solution of
optimal control problems. Moreover, our results can be extended to the case of \textit{nonsmooth} optimal control
problems and utilised to develop new numerical methods for solving such problems.

The second paper of our two-part study will be devoted to the analysis of exact penalty functions for optimal control
problems with terminal and pointwise state constraints, including optimal control problems for linear evolution
equations in Hilbert spaces.

\bibliographystyle{abbrv}  
\bibliography{ExactPen_OptControl}

\begin{thebibliography}{10}

\bibitem{Adams}
R.~A. Adams.
\newblock {\em Sobolev Spaces}.
\newblock Academic Press, New York, 1975.

\bibitem{Antipin}
A.~S. Antipin.
\newblock Terminal control of boundary models.
\newblock {\em Comput. Math. and Math. Phys.}, 54:275--302, 2014.

\bibitem{AppellZabrejko}
J.~Appell and P.~P. Zabrejko.
\newblock {\em Nonlinear Superposition Operators}.
\newblock Cambridge University Press, New York, 1990.

\bibitem{AubinFrankowska}
J.-P. Aubin and H.~Frankowska.
\newblock {\em Set-{V}alued Analysis}.
\newblock Birkh\"{a}user, Boston, 1990.

\bibitem{Aze}
D.~A. Az\'{e}.
\newblock Unified theory for metric regularity of multifunctions.
\newblock {\em J. Convex Anal.}, 13:225--252, 2006.

\bibitem{BagirovKarmitsaMakela_book}
A.~Bagirov, N.~Karmitsa, and M.~M. M\"{a}kel\"{a}.
\newblock {\em Introduction to Nonsmooth Optimization}.
\newblock Springer International Publishing, Cham, 2014.

\bibitem{BarigovKarasozenSezer}
A.~M. Bagirov, B.~Karas\"{o}zen, and M.~Sezer.
\newblock Discrete gradient method: derivative-free method for nonsmooth
  optimization.
\newblock {\em J. Optim. Theory Appl.}, 137:317--334, 2008.

\bibitem{BonnansShapiro}
J.~F. Bonnans and A.~Shapiro.
\newblock {\em Perturbation analysis of optimization problems}.
\newblock Springer Science+Business Media, New York, 2000.

\bibitem{BurkeLewisOverton}
J.~V. Burke, A.~S. Lewis, and M.~L. Overton.
\newblock A robust gradient sampling algorithm for nonsmooth, nonconvex
  optimization.
\newblock {\em SIAM J. Optim.}, 15:751--779, 2005.

\bibitem{CurtisQue2013}
F.~E. Curtis and X.~Que.
\newblock An adaptive gradient sampling algorithm for non-smooth optimization.
\newblock {\em Optim. Methods Softw.}, 28:1302--1324, 2013.

\bibitem{Demyanov2000}
V.~F. Demyanov.
\newblock Conditions for an extremum in metric spaces.
\newblock {\em J. Glob. Optim.}, 17:55--63, 2000.

\bibitem{Demyanov2003}
V.~F. Demyanov.
\newblock Constrained problems of calculus of variations via penalization
  technique.
\newblock In P.~Daniele, F.~Giannessi, and A.~Maugeri, editors, {\em
  Equilibrium Problems and Variational Models}, pages 79--108. Springer,
  Boston, 2003.

\bibitem{Demyanov2004}
V.~F. Dem'yanov.
\newblock Exact penalty functions and problems of variation calculus.
\newblock {\em Autom. Remote Control}, 65:280--290, 2004.

\bibitem{Demyanov2005}
V.~F. Demyanov.
\newblock An old problem and new tools.
\newblock {\em Optim. Methods Softw.}, 20:53--70, 2005.

\bibitem{Demyanov2010}
V.~F. Demyanov.
\newblock Nonsmooth optimization.
\newblock In G.~{D}i Pillo and F.~Schoen, editors, {\em Nonlinear optimization.
  Lecture notes in mathematics, vol. 1989}, pages 55--163. Springer-Verlag,
  Berlin, 2010.

\bibitem{DemyanovGiannessi2003}
V.~F. Demyanov and F.~Giannessi.
\newblock Variational problems with constraints involving higher-order
  derivatives.
\newblock In P.~Daniele, F.~Giannessi, and A.~Maugeri, editors, {\em
  Equilibrium Problems and Variational Models}, pages 109--134. Springer,
  Boston, 2003.

\bibitem{DemyanovKarelin2000}
V.~F. Demyanov, F.~Giannessi, and V.~Karelin.
\newblock On the penalization approach to optimal control problems.
\newblock {\em IFAC Proc. Vol.}, 33:71--74, 2000.

\bibitem{DemyanovKarelin98}
V.~F. Demyanov, F.~Giannessi, and V.~V. Karelin.
\newblock Optimal control problems via exact penalty functions.
\newblock {\em J. Glob. Optim.}, 12:215--223, 1998.

\bibitem{DemyanovKarelin2000_InCollect}
V.~F. Demyanov, F.~Giannessi, and V.~V. Karelin.
\newblock Optimal control problems and penalization.
\newblock In G.~D. Pillo and F.~Giannessi, editors, {\em Nonlinear Optimization
  and Related Topics}, pages 67--78. Springer, Boston, 2000.

\bibitem{DemyanovTamasyan2005}
V.~F. Demyanov, F.~Giannessi, and G.~{S}h. Tamasyan.
\newblock Variational control problems with constraints via exact penalization.
\newblock In F.~Giannessi and A.~Maugeri, editors, {\em Variational Analysis
  and Applications}, pages 301--342. Springer, Boston, 2005.

\bibitem{DemTam2014}
V.~F. Demyanov and G.~{S}h. Tamasyan.
\newblock Direct methods in the parametric moving boundary variational problem.
\newblock {\em Numer. Funct. Anal. Optim}, 35:934--961, 2014.

\bibitem{DemTam2011}
V.~F. Demyanov and G.~{S}h. Tamaysan.
\newblock Exact penalty functions in isoperimetric problems.
\newblock {\em Optim.}, 60:153--177, 2011.

\bibitem{ExactBarrierFunc}
G.~Di~Pillo and F.~Facchinei.
\newblock Exact barrier function methods for {L}ipschitz programs.
\newblock {\em Appl. Math. Optim.}, 32:1--31, 1995.

\bibitem{DiPilloGrippo88}
G.~Di~Pillo and L.~Grippo.
\newblock On the exactness of a class of nondifferentiable penalty functions.
\newblock {\em J. Optim. Theory Appl.}, 57:399--410, 1988.

\bibitem{DiPilloGrippo89}
G.~Di~Pillo and L.~Grippo.
\newblock Exact penalty functions in constrained optimization.
\newblock {\em SIAM J. Control Optim.}, 27:1333--1360, 1989.

\bibitem{Diestel}
J.~Diestel.
\newblock {\em Geometry of Banach Spaces. Selected Topics.}
\newblock Springer-Verlag, Berlin, Heidelberg, 1975.

\bibitem{Dolgopolik_SmoothExPen}
M.~V. Dolgopolik.
\newblock Smooth exact penalty functions {I}{I}: a reduction to standard exact
  penalty functions.
\newblock {\em Optim. Lett.}, 10:1541--1560, 2016.

\bibitem{Dolgopolik_ExPen_I}
M.~V. Dolgopolik.
\newblock A unifying theory of exactness of linear penalty functions.
\newblock {\em Optim.}, 65:1167--1202, 2016.

\bibitem{Dolgopolik_ExPen_II}
M.~V. Dolgopolik.
\newblock A unifying theory of exactness of linear penalty functions {I}{I}:
  parametric penalty functions.
\newblock {\em Optim.}, 66:1577--1622, 2017.

\bibitem{DunfordSchwartz}
N.~Dunford and J.~T. Schwartz.
\newblock {\em Linear Operators, Part 1: General Theory}.
\newblock John Wiley \& Sons, New Jersey, 1958.

\bibitem{Ekeland}
I.~Ekeland.
\newblock On the variational principle.
\newblock {\em J. Math. Anal. Appl.}, 47:324--353, 1974.

\bibitem{Eremin}
I.~I. Eremin.
\newblock Penalty method in convex programming.
\newblock {\em Soviet Math. Dokl.}, 8:459--462, 1966.

\bibitem{EvansGouldTolle}
J.~P. Evans, F.~J. Gould, and J.~W. Tolle.
\newblock Exact penalty functions in nonlinear programming.
\newblock {\em Math. Program.}, 4:72--97, 1973.

\bibitem{Filippov}
A.~F. Filippov.
\newblock On certain questions in the theory of optimal control.
\newblock {\em J. SIAM Ser. A Control}, 1:76--84, 1962.

\bibitem{Folland}
G.~B. Folland.
\newblock {\em Real Analysis. Modern Techniques and Their Applications}.
\newblock Interscience Publishers, New York, 1984.

\bibitem{FominyhKarelin2015}
A.~V. Fominyh, V.~V. Karelin, and L.~N. Polyakova.
\newblock Differential inclusions and exact penalties.
\newblock {\em Electron. J. Differ. Equ.}, 2015:1--13, 2015.

\bibitem{FominyhKarelin2018}
A.~V. Fominyh, V.~V. Karelin, and L.~N. Polyakova.
\newblock Application of the hypodifferential descent method to the problem of
  constructing an optimal control.
\newblock {\em Optim. Lett.}, 12:1825--1839, 2018.

\bibitem{FonsecaLeoni}
I.~Fonseca and G.~Leoni.
\newblock {\em Modern Methods in the Calculus of Variations: $L^p$ Spaces}.
\newblock Springer-Verlag, New York, 2007.

\bibitem{FuduliGaudioso}
A.~Fuduli, M.~Gaudioso, and E.~A. Nurminski.
\newblock A splitting bundle approach for non-smooth non-convex minimization.
\newblock {\em Optim.}, 64:1131--1151, 2015.

\bibitem{Gripenberg}
G.~Gripenberg, S.-O. Londen, and O.~Staffans.
\newblock {\em Volterra Integral and Functional Equations}.
\newblock Cambridge University Press, Cambridge, 1990.

\bibitem{Zuazua}
M.~Gugat and E.~Zuazua.
\newblock Exact penalization of terminal constraints for optimal control
  problems.
\newblock {\em Optimal Control Appl. Methods}, 37:1329--1354, 2016.

\bibitem{HaaralaMiettinenMakela}
N.~Haarala, K.~Miettinen, and M.~M\"{a}kel\"{a}.
\newblock Globally convergent limited memory bundle method for large-scale
  nonsmooth optimization.
\newblock {\em Math. Program.}, 109:181--205, 2007.

\bibitem{HanMangasarian}
S.~P. Han and O.~L. Mangasarian.
\newblock Exact penalty functions in nonlinear programming.
\newblock {\em Math. Program.}, 17:251--269, 1979.

\bibitem{HareSagatizabal}
W.~Hare and C.~Sagastiz\'{a}bal.
\newblock A redistributed proximal bundle method for nonconvex optimization.
\newblock {\em SIAM J. Optim.}, 20:2442--2473, 2010.

\bibitem{HuyerNeumaier}
W.~Huyer and A.~Neumaier.
\newblock A new exact penalty function.
\newblock {\em SIAM J. Optim.}, 13:1141--1158, 2003.

\bibitem{Ioffe77}
A.~D. Ioffe.
\newblock On lower semicontinuity of integral functionals. {I}{I}.
\newblock {\em SIAM J. Control Optim.}, 15:991--1000, 1977.

\bibitem{Ioffe}
A.~D. Ioffe.
\newblock Metric regularity and subdifferential calculus.
\newblock {\em Russ. Math. Surv.}, 55:501--558, 2000.

\bibitem{JiangLin2012}
C.~Jiang, Q.~Lin, C.~Yu, K.~L. Teo, and G.-R. Duan.
\newblock An exact penalty method for free terminal time optimal control
  problem with continuous inequality constraints.
\newblock {\em J. Optim. Theory Appl.}, 154:30--53, 2012.

\bibitem{Karelin}
V.~V. Karelin.
\newblock Penalty functions in a control problem.
\newblock {\em Autom. Remote Control}, 65:483--492, 2004.

\bibitem{KarmitsaBagriovMakela2012}
N.~Karmitsa, A.~Bagirov, and M.~M. M\"{a}kel\"{a}.
\newblock Comparing different nonsmooth minimization methods and software.
\newblock {\em Optim. Methods Softw.}, 27:131--153, 2012.

\bibitem{KeskarWachter}
N.~Keskar and A.~W\"{a}chter.
\newblock A limited-memory quasi-{N}ewton algorithm for bound-constrained
  non-smooth optimization.
\newblock {\em Optim. Methods Softw.}, 34:150--171, 2019.

\bibitem{Kiwiel2010}
K.~C. Kiwiel.
\newblock A nonderivative version of the gradient sampling algorithm for
  nonsmooth nonconvex optimization.
\newblock {\em SIAM J. Optim.}, 20:1983--1994, 2010.

\bibitem{Kruger}
A.~Y. Kruger.
\newblock Error bounds and metric subregularity.
\newblock {\em Optim.}, 64:49--79, 2015.

\bibitem{Leoni}
G.~Leoni.
\newblock {\em A First Course in Sobolev spaces}.
\newblock American Mathematical Society, Providence, RI, 2009.

\bibitem{LewisOverton2013}
A.~S. Lewis and M.~L. Overton.
\newblock Nonsmooth optimization via quasi-{N}ewton methods.
\newblock {\em Math. Program.}, 141:135--163, 2013.

\bibitem{LiYu2011}
B.~Li, C.~J. Yu, K.~L. Teo, and G.~R. Duan.
\newblock An exact penalty function method for continuous inequality
  constrained optimal control problem.
\newblock {\em J. Optim. Theory Appl.}, 151:260--291, 2011.

\bibitem{Lian}
S.~Lian.
\newblock Smoothing approximation to $l_1$ exact penalty function for
  inequality constrained optimization.
\newblock {\em Appl. Math. Comput.}, 219:3113--3121, 2012.

\bibitem{LinLoxton2014}
Q.~Lin, R.~Loxton, K.~L. Teo, and Y.~H. Wu.
\newblock Optimal feedback control for dynamic systems with state constraints:
  an exact penalty approach.
\newblock {\em Optim. Lett.}, 8:1535--1551, 2014.

\bibitem{Liu}
B.~Liu.
\newblock On smoothing exact penalty functions for nonlinear constrained
  optimization problems.
\newblock {\em J. Appl. Math. Comput.}, 30:259--270, 2009.

\bibitem{LiuzziLucidi}
G.~Liuzzi and S.~Lucidi.
\newblock A derivative-free algorithm for inequality constrained nonlinear
  programming via smoothing of an $\ell_{\infty}$ penalty function.
\newblock {\em SIAM J. Optim.}, 20:1--29, 2009.

\bibitem{Makela2002}
M.~M\"{a}kel\"{a}.
\newblock Survey of bundle methods for nonsmooth optimization.
\newblock {\em Optim. Methods Softw.}, 17:1--29, 2002.

\bibitem{Outrata83}
J.~V. Outrata.
\newblock On a class of nonsmooth optimal control problems.
\newblock {\em Appl. Math. Optim.}, 10:287--306, 1983.

\bibitem{Outrata88}
J.~V. Outrata.
\newblock On the usage of bundle methods in optimal control of
  nondifferentiable systems.
\newblock In K.~H. Hoffmann, J.~Zowe, J.~B. Hiriart-{U}rruty, and
  C.~Lemarechal, editors, {\em Trends in Mathematical Optimization.
  International Series of Numerical Mathematics, vol. 84}, pages 233--245.
  Birkh\"{a}user, Basel, 1988.

\bibitem{OutrataSchindler}
J.~V. Outrata and Z.~Schindler.
\newblock On some nondifferentiable problems in optimal control.
\newblock In V.~F. Demyanov and D.~Pallaschke, editors, {\em Nondifferentiable
  Optimization: Motivations and Applications}, pages 118--128. Springer,
  Berlin, Heidelberg, 1985.

\bibitem{Pinar}
M.~C. Pinar and S.~A. Zenios.
\newblock On smoothing exact penalty functions for convex constrained
  optimization.
\newblock {\em SIAM J. Optim.}, 4:486--511, 1994.

\bibitem{Precup}
R.~Precup.
\newblock {\em Methods in Nonlinear Integral Equations}.
\newblock Kluwer Academic Publishers, Dordrecht, 2002.

\bibitem{Rockafellar}
R.~T. Rockafellar.
\newblock {\em Convex Analysis}.
\newblock Princeton University Press, Princeton, 1970.

\bibitem{Tamasyan2013}
G.~{S}h. Tamasyan.
\newblock Numerical methods in problems of calculus of variations for
  functionals depending on higher order derivatives.
\newblock {\em J. Math. Sci.}, 188:299--321, 2013.

\bibitem{Uderzo}
A.~Uderzo.
\newblock A strong metric subregularity analysis of nonsmooth mappings via
  steepest displacement rate.
\newblock {\em J. Optim. Theory Appl.}, 171:573--599, 2016.

\bibitem{WangMaZhou}
C.~Wang, C.~Ma, and J.~Zhou.
\newblock A new class of exact penalty functions and penalty algorithms.
\newblock {\em J. Glob. Optim.}, 58:51--73, 2014.

\bibitem{Zangwill}
W.~I. Zangwill.
\newblock Nonlinear programming via penalty functions.
\newblock {\em Manag. Sci.}, 13:344--358, 1967.

\end{thebibliography}

\section*{Appendix A. The Proof of Theorem~\ref{THEOREM_COMPLETEEXACTNESS}}

For the sake of completeness, let us present an almost self-contained proof of Theorem~\ref{THEOREM_COMPLETEEXACTNESS},
although some parts of this theorem can be found in \cite{Dolgopolik_ExPen_I,Dolgopolik_ExPen_II}. The only result
we will use is Ekeland's variational principle \cite{Ekeland}. Let us recall that to apply this principle one must
suppose that the function under consideration is defined on a complete metric space, l.s.c., and bounded below. That is
why we must suppose that $X$ is complete, $A$ is closed, and both functions $\mathcal{I}$ and $\varphi$ are lower
semi-continuous.

\noindent\textbf{Proof of part 1.} Arguing by reductio ad absurdum, suppose that the optimal values of the problems
$(\mathcal{P})$ and \eqref{PenalizedProblem} do not coincide for any $\lambda \ge 0$ (note that if they coincide for
some $\lambda^* \ge 0$, then they coincide for all $\lambda \ge \lambda^*$ due to the fact that $\Phi_\lambda(x)$ is
nondecreasing in $\lambda$). Recall that the penalty term $\varphi$ is nonnegative and $\varphi(x) = 0$ iff $x \in M$.
Consequently, 
\begin{equation} \label{PenaltyFuncOnFeasibleSet}
  \Phi_{\lambda}(x) = \mathcal{I}(x) \quad \forall x \in \Omega = M \cap A,
\end{equation}
which implies that $\inf_{x \in A} \Phi_{\lambda}(x) < \mathcal{I}^* = \inf_{x \in \Omega} \mathcal{I}(x)$ for any
$\lambda \ge 0$ (note also that $\inf_{x \in A} \Phi_{\lambda}(x) > - \infty$ for any $\lambda \ge \lambda_0$ due to
the fact that $\Phi_{\lambda}$ is nondecreasing in $\lambda$). Hence, in particular, for any $n \in \mathbb{N}$ there
exists $x_n \in A$ such that $\Phi_n(x_n) < \mathcal{I}^*$. Define 
$\varepsilon_n = \Phi_n(x_n) - \inf_{x \in A} \Phi_n(x) + 1/n$. By applying Ekeland's variational principle one obtains
that for any $n \in \mathbb{N}$, $n \ge \lambda_0$, and $t > 0$ there exists $y_n \in A$ such that 
$\Phi_n(y_n) \le \Phi_n(x_n)$, and the following inequalities hold true:
$$
  d(y_n, x_n) \le t, \quad
  \Phi_n(y) - \Phi_n(y_n) > - \frac{\varepsilon_n}{t} d(y, y_n) \quad \forall y \in A \setminus \{ y_n \}.
$$
Setting $t = \varepsilon_n$, dividing the last inequality by $d(y, y_n)$, and passing to the limit inferior as 
$y \to y_n$, $y \in A$ one gets that 
\begin{equation} \label{FermatApproxRule_ExPen}
  (\Phi_n)^{\downarrow}_A (y_n) \ge -1 \quad \forall n \in \mathbb{N} \colon n \ge \lambda_0
\end{equation}
(note that if $y_n$ is an isolated point of $A$, then by definition $(\Phi_n)^{\downarrow}_A (y_n) = + \infty$). 
From the facts that $\Phi_n(y_n) \le \Phi_n(x_n) < \mathcal{I}^* < c$ and $\Phi_n(x) = \mathcal{I}(x)$ for any 
$x \in \Omega$ it follows that $y_n \in S_n(c)$ and $y_n \notin \Omega$ for any $n \in \mathbb{N}$. Observe also that
for any $n \ge \lambda_0$, $m \in \mathbb{N}$, and $x \in A$ such that 
$x \notin \Omega_{\delta} = \{ x \in A \mid \varphi(x) < \delta \}$ one has
$$
  \Phi_{n + m}(x) = \mathcal{I}(x) + (n + m) \varphi(x) 
  = \Phi_n(x) + m \varphi(x) \ge \inf_{x \in A} \Phi_{\lambda_0}(x) + m \delta.
$$
Consequently, for any sufficiently large $n$ one has $\Phi_n(x) \ge \mathcal{I}^*$, provided 
$x \in A \setminus \Omega_{\delta}$, which implies that there exists $n_0 \ge \lambda_0$ such that 
$y_n \in \Omega_{\delta}$ for all $n \ge n_0$.

Thus, $y_n \in S_{\lambda_0}(c) \cap (\Omega_{\delta} \setminus \Omega)$ for any $n \ge n_0$ (here we used the fact
that $S_n(c) \subseteq S_{\lambda_0}(c)$ for any $n \ge \lambda_0$, since $\Phi_{\lambda}$ is nondecreasing in
$\lambda$). Therefore, $\varphi^{\downarrow}_A(y_n) \le -a$ for all $n \ge n_0$. By the definition of the rate of
steepest descent for any $n \ge n_0$ there exists a sequence $\{ y_n^k \} \subset A$, $k \in \mathbb{N}$, converging to
$y_n$ such that $\varphi(y_n^k) - \varphi(y_n) \le - 0.5 a d(y_n^k, y_n)$ for all $k \in \mathbb{N}$. Hence taking into
account the fact that the function $\mathcal{I}$ is Lipschitz continuous on an open set containing the set 
$S_{\lambda_0}(c) \cap \Omega_{\delta}$ with a Lipschitz constant $L \ge 0$ one obtains that for any
$n \ge n_0$ there exists $k(n) \in \mathbb{N}$ such that for all $k \ge k(n)$ one has
$$
  \Phi_n(y_n^k) - \Phi_n(y_n) 
  = \mathcal{I}(y_n^k) - \mathcal{I}(y_n) + n \big(\varphi(y_n^k) - \varphi(y_n) \big)
  \le L d(y_n^k, y_n) - \frac{n a}{2} d(y_n^k, y_n) = \left( L - \frac{n a}{2} \right) d(y_n^k, y_n).
$$
Dividing this inequality by $d(y_n^k, y_n)$, and passing to the limit inferior as $k \to + \infty$ one obtains that
$(\Phi_n)^{\downarrow}_A (y_n) \le L - 0.5 n a < - 1$ for any sufficiently large $n$, which contradicts
\eqref{FermatApproxRule_ExPen}. \qed

\noindent\textbf{Proof of part 2.} By the first part of the theorem there exists $\lambda^* \ge 0$ such that for any
$\lambda \ge \lambda^*$ one has $\inf_{x \in A} \Phi_{\lambda}(x) = \mathcal{I}^* = \inf_{x \in \Omega} \mathcal{I}(x)$.
Hence by applying \eqref{PenaltyFuncOnFeasibleSet} one gets that 
$\argmin_{x \in \Omega} \mathcal{I}(x) \subseteq \argmin_{x \in A} \Phi_{\lambda}(x)$ for all
$\lambda \ge \lambda^*$. On ther other hand, if $x \in A \setminus \Omega$, then $\varphi(x) > 0$, and for any 
$\lambda > \lambda^*$ one has $\Phi_{\lambda}(x) > \Phi_{\lambda^*}(x) \ge \mathcal{I}^*$. Therefore, for any 
$\lambda > \lambda^*$ one has $\argmin_{x \in A} \Phi_{\lambda}(x) \subset \Omega$, which with the use of
\eqref{PenaltyFuncOnFeasibleSet} implies that 
$\argmin_{x \in \Omega} \mathcal{I}(x) = \argmin_{x \in A} \Phi_{\lambda}(x)$. \qed

Before we proceed to the proof of the last two statements of Theorem~\ref{THEOREM_COMPLETEEXACTNESS}, let us first prove
two auxiliary lemmas. The first one is a modification of the main lemma from \cite{Ioffe}, while the second one is a
generalisation of \cite[Proposition~2.7]{Dolgopolik_ExPen_I}.

\begin{lemma} \label{Lemma_RetricSubregularity}
Let the assumptions of Theorem~\ref{THEOREM_COMPLETEEXACTNESS} be valid. Then for any 
$x_0 \in S_{\lambda_0}(c) \cap \Omega$ there exists $r > 0$ such that $\varphi(x) \ge a \dist(x, \Omega)$ for all 
$x \in B(x_0, r) \cap A$, where $B(x_0, r) = \{ y \in X \mid d(y, x_0) \le r \}$.
\end{lemma}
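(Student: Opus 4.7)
The plan is to prove a local error bound for $\varphi$ around $x_0$ by Ekeland's variational principle applied on the complete metric subspace $A$ (it is closed in the complete space $X$, and $\varphi$ is l.s.c.\ and nonnegative on $A$). The strategy is to take an arbitrary $x \in B(x_0, r) \cap A$ with $\varphi(x) > 0$, produce via Ekeland a nearby point $y \in A$ whose rate of steepest descent $\varphi^{\downarrow}_A(y)$ is forced to exceed $-a$, and then invoke assumption~\ref{NegativeDescentRateAssumpt} of Theorem~\ref{THEOREM_COMPLETEEXACTNESS} in contrapositive to conclude that $y \in \Omega$, thus bounding $\dist(x, \Omega)$ by $d(x, y)$.

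Concretely, I would first use the continuity of $\Phi_{\lambda_0} = \mathcal{I} + \lambda_0 \varphi$ at $x_0$ (which follows from the Lipschitz continuity of $\mathcal{I}$ on an open neighborhood of $S_{\lambda_0}(c) \cap \Omega_\delta$, a set containing $x_0$, together with the assumed continuity of $\varphi$ at every point of $\Omega$) to fix $R > 0$ so that $B(x_0, R) \cap A \subset S_{\lambda_0}(c) \cap \Omega_\delta$. Using continuity of $\varphi$ at $x_0$ together with $\varphi(x_0) = 0$, I would then shrink $R$ if needed and pick $r \in (0, R/2]$ so that $\varphi(x) < aR/2$ for every $x \in B(x_0, r) \cap A$. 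Given such an $x$ with $\varphi(x) > 0$ and any small $\mu > 0$, I would apply Ekeland's principle on $(A, d)$ with parameters $\varepsilon = \varphi(x)$ and $\lambda = \varphi(x)/a + \mu$ to obtain $y \in A$ satisfying $\varphi(y) \le \varphi(x)$, $d(x, y) \le \lambda$, and $\varphi(z) \ge \varphi(y) - (\varepsilon/\lambda)\, d(z, y)$ for all $z \in A$. Dividing by $d(z, y)$ and passing to the liminf as $z \to y$ in $A$ yields $\varphi^{\downarrow}_A(y) \ge -\varepsilon/\lambda > -a$. The triangle inequality gives $d(y, x_0) < \varphi(x)/a + \mu + r < R$ for $\mu$ small enough (using $\varphi(x) < aR/2$ and $r \le R/2$), so $y \in B(x_0, R) \cap A \subset S_{\lambda_0}(c) \cap \Omega_\delta$. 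Assumption~\ref{NegativeDescentRateAssumpt} of Theorem~\ref{THEOREM_COMPLETEEXACTNESS} therefore forces $y \in \Omega$, for otherwise $\varphi^{\downarrow}_A(y) \le -a$, contradicting the strict lower bound above. Consequently, $\dist(x, \Omega) \le d(x, y) \le \varphi(x)/a + \mu$, and letting $\mu \downarrow 0$ gives $\varphi(x) \ge a \dist(x, \Omega)$. For $x \in B(x_0, r) \cap \Omega$, the inequality is trivial.

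The main obstacle, and the reason this local error bound requires the slightly stronger hypotheses of Theorem~\ref{THEOREM_COMPLETEEXACTNESS} compared with Theorem~\ref{Theorem_GlobalExactness}, is the two-step smallness arrangement of $r$: one needs $r$ small enough that $B(x_0, R) \cap A$ lies in $S_{\lambda_0}(c) \cap \Omega_\delta$ where the descent assumption applies, and simultaneously small enough that for every $x \in B(x_0, r) \cap A$ the Ekeland displacement $\varphi(x)/a$ itself fits within $R - r$. This second smallness is precisely where the continuity of $\varphi$ at points of $\Omega$ enters; without it one could not guarantee $\varphi(x) \to 0$ as $x \to x_0$, and the Ekeland point $y$ could escape the region on which the rate-of-descent hypothesis is assumed to hold.
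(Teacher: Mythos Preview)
Your proof is correct and follows essentially the same approach as the paper: both arguments choose a neighborhood of $x_0$ inside $S_{\lambda_0}(c)\cap\Omega_\delta$, use the continuity of $\varphi$ at $x_0$ to make $\varphi$ small on a smaller ball, apply Ekeland's variational principle to any $x$ with $\varphi(x)>0$ to produce a nearby $y\in A$ whose descent rate is strictly above $-a$, and then invoke assumption~\ref{NegativeDescentRateAssumpt} in contrapositive to force $y\in\Omega$, giving the bound $\dist(x,\Omega)\le\varphi(x)/a$ in the limit. The only difference is bookkeeping: the paper parametrizes Ekeland with $t\in(a/2,a)$ and lets $t\to a^-$, whereas you parametrize with a slack $\mu>0$ and let $\mu\downarrow 0$; these are equivalent.
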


\begin{proof}
Fix $x_0 \in S_{\lambda_0}(c) \cap \Omega$. Note that $\varphi(x_0) = 0$, since $x_0 \in \Omega$. Due to the continuity
of $\varphi$ there exists $\eta > 0$ such that $\varphi(x) < \delta$ for any $x \in B(x_0, \eta)$, i.e.
$B(x_0, \eta) \subset \Omega_{\delta}$. Furthermore, one can choose $\eta > 0$ so small that $\mathcal{I}$ is Lipschitz
continuous on $B(x_0, \eta)$. Consequently, decreasing $\eta$ further if necessary, one can suppose that 
$B(x_0, \eta) \subset S_{\lambda_0}(c) \cap \Omega_{\delta}$, which implies that $\varphi^{\downarrow}(x) \le - a < 0$
for all $x \in B(x_0, \eta) \setminus \Omega$.

The continuity of $\varphi$ implies that there exists $r > 0$ such that 
\begin{equation} \label{PhiContinuity}
  \varphi(x) < \frac{\eta a}{4} \quad \forall x \in B(x_0, r).
\end{equation}
Moreover, one can obviously suppose that $r < \eta / 2$. Let $x \in B(x_0, r) \cap A$ be arbitrary. If $x \in \Omega$,
then $\varphi(x) = 0$, and the inequality $\varphi(x) \ge a \dist(x, \Omega)$ is satisfied. Suppose that 
$x \notin \Omega$. Denote $\varepsilon = \varphi(x) > 0$, and choose an arbitrary $t \in (a/2, a)$. By applying
Ekeland's variational principle one obtains that there exists $y \in A$ such that $\varphi(y) \le \varphi(x)$, and
\begin{equation} \label{EkelandVarPrinciple}
  d(y, x) \le \frac{\varepsilon}{t}, \quad
  \varphi(z) + t d(z, y) > \varphi(y) \quad \forall z \in A \setminus \{ y \}.
\end{equation}
Let us check that $\varphi(y) = 0$. Indeed, if $\varphi(y) > 0$, then taking into account \eqref{PhiContinuity} one
obtains that $y \in B(x_0, \eta) \setminus \Omega$, since
$$
  x \in B(x_0, r) \subset B\left(x_0, \frac{\eta}{2} \right), \quad
  d(y, x) \le \frac{\varepsilon}{t} < \frac{2 \varphi(x)}{a} < \frac{\eta}{2}.
$$
Hence $\varphi^{\downarrow}(y) \le - a$, which implies that there exists $z \in A$, $z \ne y$, such that
$\varphi(z) - \varphi(y) \le - t d(z, y)$. Therefore $\varphi(z) + t d(z, y) \le \varphi(y)$, which contradicts the
second inequality in \eqref{EkelandVarPrinciple}. Thus, $\varphi(y) = 0$, i.e. $y \in \Omega$. Now, by applying the
first inequality in \eqref{EkelandVarPrinciple} one obtains that
$$
  \dist(x, \Omega) \le d(x, y) \le \frac{\varepsilon}{t} = \frac{\varphi(x)}{t}
$$
or equivalently $\varphi(x) \ge t \dist(x, \Omega)$. Hence taking into account the fact that $t \in (a/2, a)$ was chosen
arbitrarily one gets the required result.
\end{proof}

\begin{lemma} \label{Lemma_LipschitzEstimate}
Let the assumptions of Theorem~\ref{THEOREM_COMPLETEEXACTNESS} be valid, and $L$ be a Lipschitz constant of
$\mathcal{I}$ on an open set containing the set $S_{\lambda_0}(c) \cap \Omega_{\delta}$. Suppose also that 
$x^* \in S_{\lambda_0}(c) \cap \Omega$ is an inf-stationary point of $\mathcal{I}$ on $\Omega$. Then for any $L' > L$
there exists $r > 0$ such that $\mathcal{I}(x) - \mathcal{I}(x^*) \ge - L'\dist(x, \Omega) - (L' - L) d(x, x^*)$ for
all $x \in B(x^*, r)$.
\end{lemma}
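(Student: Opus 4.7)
The plan is to estimate $\mathcal{I}(x) - \mathcal{I}(x^*)$ by splitting it through a near-projection $y \in \Omega$ of $x$: use the Lipschitz bound on $\mathcal{I}(x) - \mathcal{I}(y)$ and the inf-stationarity bound on $\mathcal{I}(y) - \mathcal{I}(x^*)$, then combine with the triangle inequality. Concretely, given $L' > L$ I would set $\varepsilon = (L' - L)/2$ and then pick a small $\tau > 0$ with $(L + \varepsilon)(1 + \tau) \le L'$; this is possible because $L + \varepsilon = (L + L')/2 < L'$.

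Next, using the assumption that $\mathcal{I}$ is Lipschitz on an open set containing $S_{\lambda_0}(c) \cap \Omega_\delta$ and that $x^* \in S_{\lambda_0}(c) \cap \Omega \subseteq S_{\lambda_0}(c) \cap \Omega_\delta$ (since $\varphi(x^*) = 0 < \delta$), I would choose $r_1 > 0$ so that $B(x^*, r_1)$ lies inside this open Lipschitz set. From the inf-stationarity assumption $\mathcal{I}^{\downarrow}_\Omega(x^*) \ge 0$, there exists $\delta_1 > 0$ such that $\mathcal{I}(y) - \mathcal{I}(x^*) \ge -\varepsilon d(y, x^*)$ for every $y \in \Omega$ with $d(y, x^*) < \delta_1$. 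Finally, I would set $r = \min\{r_1, \delta_1\}/(2 + \tau)$.

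For any $x \in B(x^*, r)$ the case $x \in \Omega$ is immediate: $\dist(x, \Omega) = 0$, and the inf-stationarity inequality applied at $y = x$ yields $\mathcal{I}(x) - \mathcal{I}(x^*) \ge -\varepsilon d(x, x^*) \ge -(L' - L) d(x, x^*)$. If $x \notin \Omega$, I would pick $y \in \Omega$ with $d(x, y) \le (1 + \tau) \dist(x, \Omega)$, which is possible by the definition of $\dist$. Since $x^* \in \Omega$, $\dist(x, \Omega) \le d(x, x^*) \le r$, and hence
$$
  d(y, x^*) \le d(y, x) + d(x, x^*) \le (1 + \tau) r + r = (2 + \tau) r \le \min\{r_1, \delta_1\},
$$
so both $x, y$ lie in $B(x^*, r_1)$ and $y$ is in the effective range of the inf-stationarity estimate. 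Applying the Lipschitz and inf-stationarity bounds I would obtain
$$
  \mathcal{I}(x) - \mathcal{I}(x^*) \ge -L d(x, y) - \varepsilon d(y, x^*) \ge -(L + \varepsilon) d(x, y) - \varepsilon d(x, x^*),
$$
and then replacing $d(x, y)$ by $(1 + \tau) \dist(x, \Omega)$ and using $(L + \varepsilon)(1 + \tau) \le L'$ together with $\varepsilon \le L' - L$ gives the desired bound.

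The only delicate step is the choice of constants: one must simultaneously arrange $(L + \varepsilon)(1 + \tau) \le L'$ and $\varepsilon \le L' - L$, so that the triangle-inequality loss from bounding $d(y, x^*)$ by $d(x, y) + d(x, x^*)$ is absorbed cleanly into the two target coefficients $-L'$ and $-(L' - L)$. I do not foresee any substantive difficulty beyond this bookkeeping; in particular, Ekeland's variational principle or Lemma~\ref{Lemma_RetricSubregularity} are not needed here, since any near-minimiser of $d(x, \cdot)$ over $\Omega$ serves as the auxiliary point $y$.
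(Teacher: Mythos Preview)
Your proof is correct and follows essentially the same route as the paper: split $\mathcal{I}(x)-\mathcal{I}(x^*)$ through a near-projection of $x$ onto $\Omega$, apply the Lipschitz bound on the first piece and the inf-stationarity bound on the second, and absorb the triangle-inequality loss into the two coefficients. The only cosmetic difference is that the paper uses a minimising sequence $x_n\to\Omega$ and passes to the limit (so no $(1+\tau)$-factor and the full gap $L'-L$ can be used directly in the inf-stationarity estimate), whereas you fix a single $(1+\tau)$-approximate projection and carry the extra $\varepsilon,\tau$ bookkeeping; both arrive at the same inequality.
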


\begin{proof}
Choose $L' > L$. By the definition of inf-stationary point there exists $r_0 > 0$ such that
$\mathcal{I}(x) - \mathcal{I}(x^*) \ge -(L' - L) d(x, x^*)$ for all $x \in B(x^*, r_0) \cap \Omega$. Decreasing $r_0$,
if necessary, one can suppose that $\mathcal{I}$ is Lipschitz continuous on $B(x^*, r_0)$ with Lipschitz constant $L$.

Put $r = r_0 / 2$, and fix an arbitrary $x \in B(x^*, r)$. By definition there exists a sequence 
$\{ x_n \} \subset \Omega$ such that $d(x, x_n) \to \dist(x, \Omega)$ as $n \to \infty$ and 
$d(x, x_n) \le d(x, x^*) \le r$. Hence, in particular, one has
\begin{equation} \label{MinSeqDistOmega}
  d(x^*, x_n) \le d(x^*, x) + d(x, x_n) \le r + r = 2 r \le r_0,
\end{equation}
i.e. $\{ x_n \} \subset B(x^*, r_0) \cap \Omega$ for all $n \in \mathbb{N}$. Therefore,
$$
  \mathcal{I}(x) - \mathcal{I}(x^*) = \mathcal{I}(x) - \mathcal{I}(x_n) + \mathcal{I}(x_n) - \mathcal{I}(x^*)
  \ge - L d(x, x_n) - (L' - L) d(x_n, x^*)
  \ge - L' d(x, x_n) - (L' - L) d(x, x^*)
$$
for any $n \in \mathbb{N}$, where the last inequality follows from \eqref{MinSeqDistOmega}. Passing to the limit as 
$n \to \infty$ we arrive at the required result.
\end{proof}

\begin{remark} \label{Remark_LipschitzEstimate}
Note that if $x^*$ is a point of local minimum of $\mathcal{I}$ on $\Omega$, then in the lemma above one can obviously
set $L' = L$, and check that for any $x \in B(x^*, r)$ one has 
$\mathcal{I}(x) - \mathcal{I}(x^*) \ge - L\dist(x, \Omega)$ (see \cite[Proposition~2.7]{Dolgopolik_ExPen_I}).
\end{remark}

Now we are ready to prove the last two statements of Theorem~\ref{THEOREM_COMPLETEEXACTNESS}.

\noindent\textbf{Proof of part 3.} At first, note that without loss of generality one can suppose that 
$\delta = + \infty$. Indeed, denote $\eta = \inf_{x \in A} \Phi_{\lambda_0}(x) > - \infty$. Then for any 
$x \in A \setminus \Omega_{\delta}$ and $\lambda > \widehat{\lambda} := \lambda_0 + (c - \eta) / \delta$ one has
$$
  \Phi_{\lambda}(x) = \Phi_{\lambda_0}(x) + (\lambda - \lambda_0) \varphi(x) 
  \ge \eta + (\lambda - \lambda_0) \delta \ge c, 
$$
which implies that $S_{\lambda}(c) \subseteq S_{\lambda_0}(c) \cap \Omega_{\delta}$ for any 
$\lambda > \widehat{\lambda}$. Thus, increasing if necessary $\lambda_0$ one can suppose that $\delta = + \infty$,
i.e. one can replace $S_{\lambda_0}(c) \cap \Omega_{\delta}$ with $S_{\lambda_0}(c)$. Note also that 
\begin{equation} \label{ShrinkingSublevelSets}
  S_{\lambda}(c) \subseteq S_{\lambda_0}(c) \quad \forall \lambda \ge \lambda_0
\end{equation}
by virtue of the fact that $\Phi_{\lambda}$ is non-decreasing in $\lambda$.

Let $L > 0$ be a Lipschitz constant of $\mathcal{I}$ on an open set $V$ containing the set $S_{\lambda_0}(c)$. By our
assumption for any $x \in S_{\lambda_0}(c) \setminus \Omega$ one has $\varphi^{\downarrow}_A(x) \le - a$. Hence by 
the definition of the rate of steepest descent there exists a sequence $\{ x_n \} \subset A$ converging to $x$ and such
that
$$
  \liminf_{n \to \infty} \frac{\varphi(x_n) - \varphi(x)}{d(x_n, x)} \le - a.
$$
One can obviously suppose that $\{ x_n \} \subset V$. Therefore for any $\lambda > 0$ one has
\begin{align*}
  (\Phi_{\lambda})^{\downarrow}_A(x) 
  \le \liminf_{n \to \infty} \frac{\Phi_{\lambda}(x_n) - \Phi_{\lambda}(x)}{d(x_n, x)}
  &= \liminf_{n \to \infty} \frac{\mathcal{I}(x_n) - \mathcal{I}(x) + \lambda (\varphi(x_n) - \varphi(x))}{d(x_n, x)} \\
  &\le L + \lambda \liminf_{n \to \infty} \frac{\varphi(x_n) - \varphi(x)}{d(x_n, x)} \le L - \lambda a, 
\end{align*}
which along with \eqref{ShrinkingSublevelSets} implies that
\begin{equation} \label{NegativeRSD_InfeasiblePoints}
  (\Phi_{\lambda})^{\downarrow}_A(x) < 0 \quad 
  \forall x \in S_{\lambda}(c) \setminus \Omega \quad \forall \lambda > \max\left\{ \frac{L}{a}, \lambda_0 \right\}.
\end{equation}
Fix $\lambda > \max\{ \lambda_0, L / a \}$. Let $x^* \in S_{\lambda}(c)$ be a point of local minimum of the penalised
problem \eqref{PenalizedProblem}. Then, as it is easy to check, $(\Phi_{\lambda})^{\downarrow}_A(x) \ge 0$, which with
the use of \eqref{NegativeRSD_InfeasiblePoints} implies that $x^* \in \Omega$. Hence taking into account the fact that 
$\Phi_{\lambda}(x) = \mathcal{I}(x)$ for any $x \in \Omega$ one obtains that $x^*$ is a point of local minimum of 
the problem $(\mathcal{P})$.

Let now $x^* \in S_{\lambda}(c)$ be a point of local minimum of the problem $(\mathcal{P})$. Then by applying
Lemma~\ref{Lemma_RetricSubregularity} and Remark~\ref{Remark_LipschitzEstimate} one gets that there exists $r > 0$ such
that for any $\lambda \ge L / a$ and $x \in B(x^*, r) \cap A$ one has
$$
  \Phi_{\lambda}(x) - \Phi_{\lambda}(x^*) = \mathcal{I}(x) - \mathcal{I}(x^*) + \lambda (\varphi(x) - \varphi(x^*))
  \ge - L \dist(x, \Omega) + \lambda a \dist(x, \Omega) \ge 0,
$$
i.e. $x^*$ is a point of local minimum of the penalised problem \eqref{PenalizedProblem}. \qed

\noindent\textbf{Proof of part 4.} Fix $\lambda > \max\{ L / a, \lambda_0 \}$. Let $x^* \in S_{\lambda}(c)$ be an
inf-stationary point of $\Phi_{\lambda}$ on $A$. Then by \eqref{NegativeRSD_InfeasiblePoints} one has $x^* \in \Omega$.
Hence taking into account the fact that $\Phi_{\lambda}(x) = \mathcal{I}(x)$ for any $x \in \Omega$ one can easily
check that $x^*$ is an inf-stationary point of $\mathcal{I}$ on $\Omega$.

Let now $x^* \in S_{\lambda}(c) \cap \Omega$ be an inf-stationary point of the function $\mathcal{I}$ on $\Omega$.
Note that one can suppose that $x^*$ is not an isolated point of the set $A$, since otherwise 
$(\Phi_{\lambda})^{\downarrow}_A(x^*) = + \infty$, i.e. $x^*$ is obviously an inf-stationary point of $\Phi_{\lambda}$
on $A$.

By the definition of the rate of steepest descent there exists a sequence $\{ x_n \} \subset A$ converging to $x^*$
such that
$$
  (\Phi_{\lambda})^{\downarrow}_A(x^*) 
  = \lim_{n \to \infty} \frac{\Phi_{\lambda}(x_n) - \Phi_{\lambda}(x^*)}{d(x_n, x^*)}.
$$
If there exists a subsequence $\{ x_{n_k} \} \subset \Omega$, then taking into account the fact that $\varphi(x) = 0$
for all $x \in \Omega$ one gets that
$$
  (\Phi_{\lambda})^{\downarrow}_A(x^*) 
  = \lim_{k \to \infty} \frac{\Phi_{\lambda}(x_{n_k}) - \Phi_{\lambda}(x^*)}{d(x_{n_k}, x^*)}
  = \lim_{k \to \infty} \frac{\mathcal{I}(x_{n_k}) - \mathcal{I}(x^*)}{d(x_{n_k}, x^*)} 
  \ge \mathcal{I}^{\downarrow}_{\Omega}(x^*) \ge 0.
$$
Thus, one can suppose that $\{ x_n \} \subset A \setminus \Omega$ and, moreover, $\Phi_{\lambda}(x_n) < c$ for all 
$n \in \mathbb{N}$, since otherwise there exists a subsequence $\{ x_{n_k} \}$ such that 
$$
  \Phi_{\lambda}(x_{n_k}) \ge c > \Phi_{\lambda}(x^*),
$$
which obviously implies that $(\Phi_{\lambda})^{\downarrow}_A(x^*) \ge 0$. Thus, 
$\{ x_n \} \subset S_{\lambda_0}(c) \setminus \Omega$.

Choose $L' \in (L, \lambda a)$. By applying Lemmas~\ref{Lemma_RetricSubregularity} and \ref{Lemma_LipschitzEstimate}
one obtains that
\begin{align*}
  \Phi_{\lambda}(x_n) - \Phi_{\lambda}(x^*) 
  &= \mathcal{I}(x_n) - \mathcal{I}(x^*) + \lambda \big( \varphi(x_n) - \varphi(x^*) \big) \\
  &\ge - L' \dist(x_n, \Omega) - (L' - L)d(x_n, x^*) + \lambda a \dist(x_n, \Omega)
  \ge - (L' - L)d(x_n, x^*)
\end{align*}
for any sufficiently large $n$. Dividing this inequality by $d(x_n, x^*)$, and passing to the limit as $n \to \infty$
one gets that $(\Phi_{\lambda})^{\downarrow}_A(x^*) \ge - (L' - L)$, which implies that
$(\Phi_{\lambda})^{\downarrow}_A(x^*) \ge 0$ due to the fact that $L' \in (L, \lambda a)$ was chosen arbitrarily. Thus,
$x^*$ is an inf-stationary point of $\Phi_{\lambda}$ on $A$, and the proof is complete. \qed

\section*{Appendix B. Integral Functionals and Nemytskii Operators}

In this section we obtain several results on the functional
$$
  \mathcal{I}(x, u) = \int_0^T \theta(x(t), u(t), t) \, dt + \zeta(x(T))
$$
and the nonlinear operator $F(x, u) = \dot{x}(\cdot) - f(x(\cdot), u(\cdot), \cdot)$ defined in
Section~\ref{Section_FreeEndpoint} that significantly simplify the verification of the assumptions of general theorems
on exact penalty functions in the case of optimal control problems. Namely, we obtain conditions under which 
the functional $\mathcal{I}(x, u)$ is correctly defined, finite valued, and Lipschitz continuous on bounded sets,
and the nonlinear operator $F(x, u) = \dot{x}(\cdot) - f(x(\cdot), u(\cdot), \cdot)$ defining the constraint 
$F(x, u) = 0$ maps $W_{1, p}^d(0, T) \times L_q^m(0, T)$ to $L_p^d(0, T)$ (note that the codomain of $F(x, u)$ is
$L_p^d(0, T)$, since $\dot{x}$ belongs to this space) and is differentiable in $x$. In order to obtain necessary and
sufficient conditions, we shall suppose that $x \in L_{\infty}^d[0, T]$ and $\zeta(\cdot) \equiv 0$. In the case when 
$x \in W^d_{1, p}(0, T)$ these conditions become only sufficient.

The following result is a particular case of \cite[Theorem~7.3]{FonsecaLeoni}.

\begin{proposition} \label{Prp_Functional_Correct}
Let the function $\theta$ be continuous and $\zeta(\cdot) \equiv 0$. Then $\mathcal{I}(x, u)$ is correctly defined and
finite valued for every $x \in L^d_{\infty}(0, T)$ and $u \in L_q^m(0, T)$ if and only if one of the two following
conditions is satisfied:
\begin{enumerate}
\item{$q = + \infty$;
}

\item{$1 \le q < + \infty$, and for every $R > 0$ there exist $C_R > 0$ and an a.e. nonnegative function 
$\omega_R \in L^1(0, T)$ such that
\begin{equation} \label{IntegrandGrowthCondition}
  |\theta(x, u, t)| \le C_R |u|^q + \omega_R(t)
\end{equation}
for a.e. $t \in [0, T]$ and for all $(x, u) \in \mathbb{R}^d \times \mathbb{R}^m$ with $|x| \le R$ (i.e. $\theta$
satisfies the growth condition of order $(q, 1)$).
}
\end{enumerate}
\end{proposition}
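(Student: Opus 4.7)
The plan is to prove the two implications separately: sufficiency is a direct measurability-and-bound argument, while necessity proceeds by contraposition via a measurable-selection construction of a counterexample.

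For sufficiency, continuity of $\theta$ together with measurability of $x$ and $u$ yields measurability of the composition $t \mapsto \theta(x(t), u(t), t)$. If $q = +\infty$, both essential ranges are bounded, so continuity of $\theta$ on the compact set $\overline{B_{\|x\|_\infty}(0)} \times \overline{B_{\|u\|_\infty}(0)} \times [0, T]$ provides a uniform bound on the integrand, whence finiteness of the integral. If $1 \le q < +\infty$ and the growth condition of order $(q, 1)$ holds, then with $R = \|x\|_\infty$ one has $|\theta(x(t), u(t), t)| \le C_R |u(t)|^q + \omega_R(t)$ a.e., and the right-hand side lies in $L^1(0, T)$ because $u \in L_q^m(0, T)$ and $\omega_R \in L^1(0, T)$.

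For necessity, the case $q = +\infty$ is vacuous, so I assume $1 \le q < +\infty$ and argue by contraposition. Suppose the growth condition fails at some $R > 0$. Then the nonnegative (via the choice $u = 0$) measurable function
$$
g_n(t) := \sup_{|x| \le R,\, u \in \mathbb{R}^m} \bigl(|\theta(x, u, t)| - n|u|^q\bigr)
$$
fails to lie in $L^1(0, T)$ for any $n \in \mathbb{N}$; otherwise $C_R := n$ and $\omega_R := g_n$ would verify the growth condition. Measurability of $g_n$ follows from continuity of $\theta$ together with separability of the supremum over a countable dense set of $(x, u)$. By monotone convergence, the bounded restrictions $g_n^{(k)}(t) := \sup_{|x| \le R,\, |u| \le k}(|\theta(x, u, t)| - n|u|^q)$ satisfy $\int_0^T g_n^{(k)} \, dt \to +\infty$ as $k \to \infty$, so for each $n$ one can pick $k_n$ with $\int_0^T g_n^{(k_n)} \, dt \ge 2^n$. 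A measurable-selection theorem (e.g.\ Jankov--von Neumann) then furnishes Borel maps $x_n, u_n$ with $|x_n| \le R$, $|u_n| \le k_n$, and $|\theta(x_n(t), u_n(t), t)| \ge n|u_n(t)|^q + g_n^{(k_n)}(t)/2$. One then extracts pairwise-disjoint measurable sets $S_n \subset [0, T]$ with $\int_{S_n} g_n^{(k_n)} \, dt \ge 2^{n-1}$ while keeping $|S_n|$ small enough that $\sum_n k_n^q|S_n|$ converges, and defines
$$
x(t) = \sum_{n \ge 1} x_n(t) \mathbf{1}_{S_n}(t), \quad u(t) = \sum_{n \ge 1} u_n(t) \mathbf{1}_{S_n}(t)
$$
(extending $x$ by zero outside $\bigcup_n S_n$). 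This produces $x \in L_\infty^d(0, T)$ and $u \in L_q^m(0, T)$ for which $|\theta(x(\cdot), u(\cdot), \cdot)|$ is not integrable on $[0, T]$, contradicting the hypothesis.

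The main obstacle is the simultaneous control of the measures $|S_n|$ and the truncation levels $k_n$: the bounded function $g_n^{(k_n)}$ may distribute its mass over a large set, forcing $|S_n|$ to be comparatively large just to harvest $2^{n-1}$ of integral, while $\sum_n k_n^q |S_n|$ must still be summable to secure $u \in L^q$. Overcoming this tension requires either passing to a subsequence of $n$'s along which the growth rate of $k_n$ is controllable, or refining the measurable selection so that the supports of $(x_n, u_n)$ concentrate on small sets. This delicate balancing is the key technical point that separates the necessity direction from the routine sufficiency argument.
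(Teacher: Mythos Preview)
The paper does not supply a proof of this proposition at all; it simply records the result as a particular case of Theorem~7.3 in Fonseca--Leoni. So there is no in-paper argument to compare your attempt against beyond the citation.

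Your sufficiency direction is correct and routine. For necessity you have accurately located the difficulty and, in your final paragraph, effectively conceded that the argument is unfinished. The gap is real, not merely cosmetic. Having chosen $k_n$ so that $\int_0^T g_n^{(k_n)}\,dt\ge 2^n$, you need pairwise disjoint $S_n\subset[0,T]$ carrying a definite share of that integral while simultaneously keeping $\sum_n k_n^q|S_n|<\infty$. But $g_n^{(k_n)}$ is bounded above by $M_n:=\max\{|\theta(x,u,t)|:|x|\le R,\ |u|\le k_n,\ t\in[0,T]\}$, so capturing integral $2^{n-1}$ forces $|S_n|\ge 2^{n-1}/M_n$; nothing in your construction relates $M_n$ to $k_n^q$, and there is no reason the resulting lower bound on $|S_n|$ should be compatible with the summability constraint (or even with $\sum_n|S_n|\le T$). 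The two remedies you suggest---passing to a subsequence or refining the selection---are gestures, not arguments: you would need to exhibit the mechanism that makes either one work.

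The classical proofs (Krasnosel'skii's treatment of Nemytskii operators, or the cited Fonseca--Leoni theorem) organize the counterexample differently, typically building it layer by layer with the measures of the supporting sets fixed \emph{before} the truncation levels are chosen, and exploiting the freedom in $n$ (the factor in front of $|u|^q$) more aggressively. If you want a self-contained necessity proof, that is where to look; as it stands your proposal identifies the obstacle but does not overcome it.
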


Next we obtain necessary and sufficient conditions for $\mathcal{I}(x, u)$ to be Lipschitz continuous on bounded sets,
which are needed for the verification of the assumptions of Theorem~\ref{THEOREM_COMPLETEEXACTNESS}. 

\begin{proposition} \label{Prp_LipschitzConditions}
Let $\zeta(\cdot) \equiv 0$, $\theta = \theta(x, u, t)$ be continuous, differentiable in $x$ and $u$, and let the
functions $\nabla_x \theta$ and $\nabla_u \theta$ be continuous as well. Suppose also that either $q = + \infty$ or
$\theta$ satisfies \eqref{IntegrandGrowthCondition}. Then the functional $\mathcal{I}(x, u)$ is Lipschitz continuous on
any bounded subset of $L^d_{\infty}(0, T) \times L^m_q(0, T)$ if and only if one of the following conditions is
satisfied:
\begin{enumerate}
\item{$q = +\infty$;
}

\item{$1 \le q < + \infty$, and for every $R > 0$ there exist $C_R > 0$, and a.e. nonnegative functions 
$\omega_R \in L^1(0, T)$ and $\eta_R \in L^{q'}(0, T)$ such that
\begin{equation}
  |\nabla_x \theta(x, u, t)| \le C_R |u|^q + \omega_R(t), \qquad
  |\nabla_u \theta(x, u, t)| \le C_R |u|^{q - 1} + \eta_R(t) \label{DerivativeGrowth}
\end{equation}
for a.e. $t \in [0, T]$ and for all $(x, u) \in \mathbb{R}^d \times \mathbb{R}^m$ with $|x| \le R$ (i.e. 
$\nabla_x \theta$ satisfies the growth condition of order $(q, 1)$, while $\nabla_u \theta$ satisfies the growth
condition of order $(q - 1, q')$).
}
\end{enumerate}
\end{proposition}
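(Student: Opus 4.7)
The plan is to prove the equivalence by splitting off the trivial case $q = +\infty$ and concentrating on $1 \le q < +\infty$, where sufficiency follows from the mean value theorem combined with Hölder's inequality, and necessity follows from duality plus the classical pointwise characterisation of bounded Nemytskii operators. When $q = +\infty$, any bounded set of $L_\infty^d \times L_\infty^m$ gives a uniform pointwise bound on $(x(t), u(t))$; the continuity of $\nabla_x \theta$ and $\nabla_u \theta$ then yields a uniform pointwise bound on the gradients, and the mean value theorem combined with $\|\cdot\|_1 \le T^{1/p'} \|\cdot\|_p$-type estimates delivers Lipschitz continuity on that set.

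For sufficiency in the case $1 \le q < +\infty$, I would fix a bounded set, say $\|x_j\|_\infty \le R$ and $\|u_j\|_q \le R$ for $j = 1, 2$, and apply the mean value theorem pointwise to write
$$
\mathcal{I}(x_1, u_1) - \mathcal{I}(x_2, u_2) = \int_0^T \Bigl( \langle \nabla_x \theta(\xi(t), \eta(t), t), x_1(t) - x_2(t) \rangle + \langle \nabla_u \theta(\xi(t), \eta(t), t), u_1(t) - u_2(t) \rangle \Bigr) \, dt,
$$
where $(\xi(t), \eta(t))$ is an intermediate point, so that $|\xi(t)| \le R$ and $|\eta(t)| \le |u_1(t)| + |u_2(t)|$. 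The growth conditions \eqref{DerivativeGrowth} give $\|\nabla_x \theta(\xi(\cdot), \eta(\cdot), \cdot)\|_1 \le C_R 2^{q-1}(\|u_1\|_q^q + \|u_2\|_q^q) + \|\omega_R\|_1$ and, using the identity $(q-1)q' = q$, $\|\nabla_u \theta(\xi(\cdot), \eta(\cdot), \cdot)\|_{q'} \le (C_R^{q'}2^{q'-1}(\|u_1\|_q^q + \|u_2\|_q^q) + \|\eta_R\|_{q'}^{q'})^{1/q'}$. Estimating the first term in the integral by $\|\nabla_x \theta\|_1 \|x_1 - x_2\|_\infty$ and the second by $\|\nabla_u \theta\|_{q'} \|u_1 - u_2\|_q$ furnishes a Lipschitz constant depending only on $R$.

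For necessity, assume $\mathcal{I}$ is Lipschitz on each bounded subset, fix $R > 0$, and let $L_R$ denote the corresponding Lipschitz constant on $\{\|x\|_\infty \le R,\, \|u\|_q \le R\}$. For any constant $x \equiv x_0 \in \mathbb{R}^d$ with $|x_0| \le R$, any $u$ with $\|u\|_q < R$, and any $v \in L_q^m(0,T)$, the difference quotient $\alpha^{-1}(\mathcal{I}(x_0, u + \alpha v) - \mathcal{I}(x_0, u))$ is bounded by $L_R\|v\|_q$ for small $\alpha > 0$; Lebesgue's dominated convergence (using the growth \eqref{IntegrandGrowthCondition} on $\theta$ to produce an $L^1$-dominating function via the mean value theorem) lets me pass to the limit and obtain $|\int_0^T \langle \nabla_u \theta(x_0, u(t), t), v(t) \rangle \, dt| \le L_R \|v\|_q$. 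Thus the Nemytskii operator $u \mapsto \nabla_u \theta(x_0, u(\cdot), \cdot)$ sends the $L_q^m$-ball of radius $R$ into a bounded subset of $L_{q'}^m(0,T)$, and the classical Krasnoselskii-Vainberg characterisation of bounded Nemytskii operators (see \cite{FonsecaLeoni}) forces the pointwise growth $|\nabla_u \theta(x_0, u, t)| \le C_R |u|^{q-1} + \eta_R(t)$ with $\eta_R \in L^{q'}(0,T)$. The growth bound on $\nabla_x \theta$ is obtained analogously, varying $x$ in $L_\infty^d$ rather than $u$ in $L_q^m$: Lipschitz continuity in $x \in L_\infty^d$ yields, by duality $L^1 \hookrightarrow (L^\infty)^*$, an $L^1$-bound on $\nabla_x \theta$ along any admissible $(x, u)$, and the Nemytskii characterisation with target space $L^1$ delivers the growth of order $(q, 1)$.

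The principal obstacle will be the necessity direction, specifically the rigorous step from the integrated bound $\|\nabla_u \theta(x_0, u, \cdot)\|_{q'} \le L_R$ (valid for each fixed admissible $u$) to a pointwise growth condition uniform in $(x, u)$: one has to apply the Krasnoselskii-Vainberg theorem for each constant $x_0$ in the Euclidean ball $|x_0| \le R$ and ensure that the resulting constants $C_R$ and majorants $\eta_R$ can be chosen uniformly, which is handled by noting that the Nemytskii characterisation can be stated in a form that depends only on the bounding constants of the operator. A second, more technical issue is justifying the interchange of the limit $\alpha \to 0$ with the integral in the difference quotient, which requires combining the mean value theorem with the already-assumed growth on $\theta$ to produce an integrable dominating function.
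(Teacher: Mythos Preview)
Your overall strategy matches the paper's: sufficiency via the mean value theorem and H\"older, necessity via duality followed by a Nemytskii-operator characterisation (the paper uses \cite[Theorem~7.3]{FonsecaLeoni}). The sufficiency argument is fine; the paper phrases it through G\^{a}teaux differentiability and a bound on $\|\mathcal{I}'(x,u)\|$, but your direct mean-value estimate is equivalent.

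There is, however, a genuine gap in the necessity direction, precisely at the dominated-convergence step you flag. Your proposed fix---producing an $L^1$ dominating function for the difference quotient from the growth condition~\eqref{IntegrandGrowthCondition} on $\theta$---does not work: the growth bound on $\theta$ controls $\theta$ itself but says nothing about the size of $\nabla_u \theta$, and it is exactly $\nabla_u \theta$ (at an intermediate point) that appears in the mean-value expression for the difference quotient. A bounded $\theta$ can have arbitrarily large $u$-gradient, so no integrable majorant is available this way. The paper sidesteps the issue by first restricting to $u \in L_q^m \cap L_\infty^m$ and $v \in (C[0,T])^m$: then $x$, $u$, $v$ are all essentially bounded, continuity of $\nabla_u\theta$ gives a trivial pointwise bound, and dominated convergence is immediate. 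This yields $\|\nabla_u\theta(x(\cdot), u(\cdot), \cdot)\|_{q'} \le L_{2r}$ on that dense class; the bound is then extended to all $v \in L_q^m$ by density of $C[0,T]$, and to all $u \in L_q^m$ by the truncations $u_n(t)=u(t)\mathbf{1}_{\{|u(t)|\le n\}}+n\mathbf{1}_{\{|u(t)|>n\}}$ combined with Fatou's lemma. Only after this does the paper invoke the Nemytskii characterisation.

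A secondary point: the paper works with general $x \in L_\infty^d$ rather than constants $x \equiv x_0$. This matters because \cite[Theorem~7.3]{FonsecaLeoni} is stated for integrands evaluated along all $(x,u) \in L_\infty^d \times L_q^m$ and delivers the uniform-in-$x$ growth bound directly; once one knows $\int_0^T|\nabla_u\theta(x(t),u(t),t)|^{q'}dt<\infty$ for \emph{every} such pair, the pointwise bound with uniform $C_R$, $\eta_R$ falls out of that theorem. Your route through constants $x_0$ followed by a separate uniformity argument would require extra work that the paper's choice avoids.
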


\begin{proof}
Let us prove the ``if'' part of the theorem first. For any $r > 0$ denote 
$$
  B_r = \Big\{ (x, u) \in L^d_{\infty}(0, T) \times L^m_q(0, T) \Bigm| \| x \|_{\infty} < r, \: \| u \|_q < r \Big\}.
$$
Choose $x, h \in L^d_{\infty}(0, T)$, $u, v \in L^m_q(0, T)$, and $\alpha > 0$. By the mean value theorem for a.e.
$t \in (0, T)$ there exists $\alpha(t) \in (0, \alpha)$ such that
\begin{multline*}
  \frac{1}{\alpha} \big( \theta(x(t) + \alpha h(t), u(t) + \alpha v(t), t) - \theta(x(t), u(t), t) \big) \\
  = \langle \nabla_x \theta(x(t) + \alpha(t) h(t), u(t) + \alpha(t) v(t), t), h(t) \rangle
  + \langle \nabla_u \theta(x(t) + \alpha(t) h(t), u(t) + \alpha(t) v(t), t), v(t) \rangle
\end{multline*}
for a.e. $t \in (0, T)$. From the continuity of $\nabla_x \theta$ and $\nabla_u \theta$ it follows that the right-hand
side of the above equality converges to 
$$
  \langle \nabla_x \theta(x(t), u(t), t), h(t) \rangle + \langle \nabla_u \theta(x(t), u(t), t), v(t) \rangle
$$
as $\alpha \to + 0$ for a.e. $t \in (0, T)$, and this function is measurable. Hence by applying one of the two
conditions of the proposition and Lebesgue's dominated convergence theorem one can easily verify that
\begin{equation} \label{IntegralFunctional_GateauxDeriv}
  \lim_{\alpha \to +0} \frac{\mathcal{I}(x + \alpha h, u + \alpha v) - \mathcal{I}(x, u)}{\alpha}
  = \int_0^T \Big( \langle \nabla_x \theta(x(t), u(t), t), h(t) \rangle
  + \langle \nabla_u \theta(x(t), u(t), t), v(t) \rangle \Big) \, dt,
\end{equation}
i.e. the functional $\mathcal{I}(x, u)$ is G\^{a}teaux differentiable, and its G\^ateaux derivative 
$\mathcal{I}'(x, u)[h, v]$ is equal to the expression above. Consequently, one has
\begin{equation} \label{DerivativeNorm}
  \| \mathcal{I}'(x, u) \| \le \big\| \nabla_x \theta(x(\cdot), u(\cdot), \cdot) \big\|_1 + 
  \big\| \nabla_u \theta(x(\cdot), u(\cdot), \cdot) \big\|_{q'}.
\end{equation}
Let $C \subset L_{\infty}^d(0, T) \times L_q^m(0, T)$ be a bounded set. Then $C \subseteq B_r$ for some $r > 0$. As is
well-known and easy to check, the functional $\mathcal{I}$ is Lipschitz continuous on $B_r$ with Lipschitz constant
$L > 0$ iff $L = \sup_{(x, u) \in B_r} \| \mathcal{I}'(x, u) \| < + \infty$. It remains to note that from
\eqref{DerivativeNorm} and the assumptions of the proposition it follows that this supremum is indeed finite.

Let us now prove the converse statement. We derive only the necessary growth conditions for the function 
$\nabla_u \theta$, since the derivation of the growth conditions for the function $\nabla_x \theta$ is essentially the
same (even slightly simpler) as in the case of $\nabla_u \theta$.

Let $1 < q < + \infty$, and fix some $x \in L^d_{\infty}(0, T)$ and $r > \| x \|_{\infty}$. Choose
$u \in L_q^m(0, T) \cap L_{\infty}^m(0, T)$ and $v \in (C[0, T])^m$ with $\| u \|_q < r$ and $\| v \|_q < r$,
where $C[0, T]$ is the space of of continuous functions defined on $[0, T]$. 

Choose $\alpha \in (0, 1]$. By the mean value theorem for a.e. $t \in (0, T)$ there exists $\alpha(t) \in (0, \alpha)$
such that
\begin{equation} \label{MenaValue_UDerivative}
  \frac{1}{\alpha} \big( \theta(x(t), u(t) + \alpha v(t), t) - \theta(x(t), u(t), t) \big)
  = \langle \nabla_u \theta(x(t), u(t) + \alpha(t) v(t), t), v(t) \rangle.
\end{equation}
The right-hand side of the above equality converges to $\nabla_u \theta(x(t), u(t), t)$ for a.e. $t \in (0, T)$ as
$\alpha \to +0$ due to the continuity of $\nabla_u \theta$. Hence integrating the left-hand side of
\eqref{MenaValue_UDerivative} from $0$ to $T$, and passing to the limit as $\alpha \to +0$ with the use of Lebesgue's
dominated convergence theorem and the fact that all functions $x$, $u$ and $v$ are essentially bounded one gets that
$$
  \lim_{\alpha \to + 0} \frac{\mathcal{I}(x, u + \alpha v) - \mathcal{I}(x, u)}{\alpha}
  = \int_0^T \langle \nabla_u \theta(x(t), u(t), t), v(t) \rangle \, dt.
$$
By our assumption the functional $\mathcal{I}(x, u)$ is Lipschitz continuous on $B_{2r}$ with Lipschitz constant 
$L_{2r} > 0$. Therefore
\begin{equation} \label{Derivative_vs_Lipschitz}
  \left| \int_0^T \langle \nabla_u \theta(x(t), u(t), t), v(t) \rangle \, dt \right| \le L_{2r} \| v \|_q.
\end{equation}
Choose now arbitrary $v \in L_q^m(0, T)$ with $\| v \| < r$. Since $C[0, T]$ is dense in $L^q(0, T)$ 
(see, e.g. \cite[Theorem~2.78]{FonsecaLeoni}), there exists a sequence $\{ v_n \} \subset (C[0, T])^m$
converging to $v$ in $L_q^m(0, T)$ and such that $\| v_n \| < r$ for all $n \in \mathbb{N}$. By applying inequality
\eqref{Derivative_vs_Lipschitz} with $v = v_n$ and passing to the limit as $n \to \infty$ one gets that
$$
  \left| \int_0^T \langle \nabla_u \theta(x(t), u(t), t), v(t) \rangle \, dt \right| \le L_{2r} \| v \|_q
  \quad \forall v \in L_q^m(0, T) \colon \| v \|_q < r
$$
(here we used the fact that the mapping $v \mapsto \int_0^T \langle \nabla_u \theta(x(t), u(t), t), v(t) \rangle \, dt$
is a continuous linear functional on $L_q^m(0, T)$ due to the essential boundedness of $x$ and $u$ and the continuity
of $\nabla_u \theta$). Taking the supremum over all $v \in L_q^m(0, T)$ with $\| v \|_q < r$ one gets that
\begin{equation} \label{DerivativeNorm_vs_LipschitzConstant}
  \int_0^T \big| \nabla_u \theta(x(t), u(t), t) \big|^{q'} \, dt \le L_{2r}^{q'}
\end{equation} 
for all $u \in L_q^m (0, T) \cap L_{\infty}^m (0, T)$ with $\| u \|_q < r$. Let us check that this inequality holds true
for any $u \in L_q^m (0, T)$ with $\| u \|_q < r$. Then taking into account the fact that $x$ and $r$ were chosen
arbitrarily one obtains that
$$
  \int_0^T \big| \nabla_u \theta(x(t), u(t), t) \big|^{q'} \, dt < + \infty
  \quad \forall (x, u) \in L_{\infty}^m(0, T) \times L_q^m(0, T).
$$
Consequently, by applying \cite[Theorem~7.3]{FonsecaLeoni} one gets that for any $R > 0$ there exist $C_R > 0$
and an a.e. nonnegative function $\omega_R \in L^1(0, T)$ such that
$| \nabla_u \theta(x, u, t) |^{q'} \le C_R |u|^q + \omega_R(t)$ for a.e. $t \in (0, T)$ and for all 
$(x, u) \in \mathbb{R}^d \times \mathbb{R}^m$ with $|x| < R$. Therefore for any such $x$, $u$ and $t$ one has
$$
  \big| \nabla_u \theta(x, u, t) \big| \le \big( C_R |u|^q + \omega_R(t) \big)^{1 / q'}
  \le C_R^{1 / q'} |u|^{q / q'} + \omega_R^{1/q'}(t),
$$
i.e. the desired growth condition (see~\eqref{DerivativeGrowth}) holds true (note that $q / q' = q - 1$).

Thus, it remains to prove that \eqref{DerivativeNorm_vs_LipschitzConstant} is valid for all $u \in L_q^m (0, T)$ with 
$\| u \|_q < r$. Fix any such $u$. For any $n \in \mathbb{N}$ define
$$
  u_n(t) = \begin{cases}
    u(t), & \text{if } |u(t)| \le n, \\
    n, & \text{otherwise.}
  \end{cases}
$$
Clearly, $u_n \in L_q^m(0, T) \cap L_{\infty}^m(0, T)$ and $\| u_n \|_q < r$ for any $n \in \mathbb{N}$. Furthermore, 
$|\nabla_u \theta(x(t), u_n(t), t)|^{q'}$ converges to $|\nabla_u \theta(x(t), u(t), t)|^{q'}$ as $n \to \infty$ for
a.e. $t \in (0, T)$. Hence by applying Fatou's Lemma (see, e.g.~\cite[Lemma~2.18]{Folland}) and
\eqref{DerivativeNorm_vs_LipschitzConstant} one obtains that
$$
  \int_0^T |\nabla_u \theta(x(t), u(t), t)|^{q'} \, dt \le
  \liminf_{n \to \infty} \int_0^T |\nabla_u \theta(x(t), u_n(t), t)|^{q'} \, dt \le L_{2r}^{q'},
$$
and the proof of the case $1 < q < + \infty$ is complete.

To obtain the necessary growth condition in the case $q = 1$ note that in this case inequality
\eqref{Derivative_vs_Lipschitz} holds true for any $v \in L_1^m(0, T)$ with $\| v \|_1 < r$. Taking 
the supremum over all such $v$ one gets that $|\nabla_u \theta(x(t), u(t), t)| \le L_{2r}$ for a.e. $t \in (0, T)$. Let 
$u \in L_1^m (0, T)$ with $\| u \|_1 < r$ be arbitrary. By definition $u_n \in L_{\infty}^m(0, T)$ for any 
$n \in \mathbb{N}$. Therefore $|\nabla_u \theta(x(t), u_n(t), t)| \le L_{2r}$ for a.e. $t \in (0, T)$ and for all 
$n \in \mathbb{N}$, which obviously implies that $|\nabla_u \theta(x(t), u(t), t)| \le L_{2r}$ for a.e. $t \in (0, T)$.
Hence taking into account the fact that $x$ and $r$ were chosen arbitrarily one obtains that 
$\nabla_u \theta(x(\cdot), u(\cdot), \cdot) \in L_{\infty}^m(0, T)$ for any $x \in L_{\infty}^d(0, T)$ and 
$u \in L_1^m(0, T)$. Utilising this result one can easily obtain the required growth condition on $\nabla_u \theta$ in
the case $q = 1$.
\end{proof}

\begin{remark}
Note that in the case $q = 1$ the second inequality in \eqref{DerivativeGrowth} simply means that for any $R > 0$ there
exists $C_R > 0$ such that $|\nabla_u \theta(x, u, t)| \le C_R$ for a.e. $t \in (0, T)$ and for all 
$(x, u) \in \mathbb{R}^d \times \mathbb{R}^m$ with $|x| \le R$.
\end{remark}

Let us also point conditions under which the nonlinear operator $F(x, u) = \dot{x}(\cdot) - f(x(\cdot), u(\cdot),
\cdot)$ is correctly defined and differentiable in $x$. The following result is a simple generalisation of the standard
theorem on Nemytskii operators (see, e.g. \cite{AppellZabrejko}).

\begin{proposition} \label{Prp_DiffEqConstr_Correct}
Let $f$ be continuous. Then the Nemytskii operator $(x(\cdot), u(\cdot)) \mapsto f(x(\cdot), u(\cdot), \cdot)$ maps
$L_{\infty}^d (0, T) \times L_q^m (0, T)$ to $L_p^d(0, T)$ if and only if one of the two following conditions is
satisfied:
\begin{enumerate}
\item{$q = + \infty$;
}

\item{$1 \le q < + \infty$, and for every $R > 0$ there exist $C_R > 0$ and an a.e. nonnegative function 
$\omega_R \in L^p(0, T)$ such that
\begin{equation} \label{DiffEqGrowthCond}
  |f(x, u, t)| \le C_R |u|^\frac{q}{p} + \omega_R(t)
\end{equation}
for a.e. $t \in [0, T]$ and for all $(x, u) \in \mathbb{R}^d \times \mathbb{R}^m$ with $|x| \le R$ (i.e. $f$ satisfies
the growth conditions of order $(q/p, p)$).
}
\end{enumerate}
Moreover, if one of these conditions is satisfied, then $F$ is a continuous nonlinear operator from
$W_{1, p}^d(0, T) \times L_q^m(0, T)$ to $L_p^d(0, T)$.
\end{proposition}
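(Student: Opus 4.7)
The plan is to follow the template of the classical Krasnoselskii--Nemytskii theorem (cf.~\cite{AppellZabrejko}), adapted to the present setting where the ``state'' variable lies in $L_{\infty}^d(0,T)$ rather than in a classical Lebesgue space. Three things need to be established: sufficiency of each of the two listed conditions, necessity of the growth bound in the case $1 \le q < +\infty$, and continuity of $F$ between the specified spaces.

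For sufficiency I would argue directly. If $q = +\infty$, then for any $(x, u) \in L_{\infty}^d(0,T) \times L_{\infty}^m(0,T)$ the composition $t \mapsto f(x(t), u(t), t)$ is measurable (by continuity of $f$ in $(x, u)$ and measurability of $(x, u)$) and essentially bounded, hence lies in $L_p^d(0,T)$. If $1 \le q < +\infty$, fix $(x, u) \in L_{\infty}^d(0,T) \times L_q^m(0,T)$, set $R = \| x \|_{\infty}$, and apply \eqref{DiffEqGrowthCond} together with $(a + b)^p \le 2^p(a^p + b^p)$ to get
$$
  |f(x(t), u(t), t)|^p \le 2^p C_R^p |u(t)|^q + 2^p \omega_R(t)^p \quad \text{for a.e. } t \in (0, T),
$$
the right-hand side being integrable since $u \in L_q^m(0,T)$ and $\omega_R \in L^p(0,T)$.

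For necessity in the case $1 \le q < +\infty$, I would proceed by contradiction. Suppose that for some $R > 0$ no growth bound of the required form holds. Then by a standard selection argument one can extract a sequence of measurable ``witness'' sets $E_n \subset (0, T)$ of positive measure and measurable functions $x_n$, $u_n$ supported on $E_n$ with $|x_n| \le R$, such that $|f(x_n(t), u_n(t), t)|^p$ exceeds $n (|u_n(t)|^q + \eta_n(t))$ on $E_n$ for any preassigned $L^1$-weight $\eta_n \ge 0$. By choosing the measures $|E_n|$ and the weights $\eta_n$ appropriately and gluing the $x_n$, $u_n$ together on pairwise disjoint subintervals of $(0, T)$, one produces a single pair $(x, u) \in L_{\infty}^d(0,T) \times L_q^m(0,T)$ for which $f(x(\cdot), u(\cdot), \cdot) \notin L_p^d(0,T)$, contradicting the hypothesis. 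This measurable-selection and gluing construction is the main obstacle; it is routine but delicate, and mirrors the classical approach referenced above.

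For the final continuity assertion, I would split $F(x, u) = \dot{x}(\cdot) - f(x(\cdot), u(\cdot), \cdot)$ and handle the two summands separately. The first map $x \mapsto \dot{x}$ is linear and continuous from $W_{1,p}^d(0,T)$ to $L_p^d(0,T)$ by the definition of $\| \cdot \|_{1, p}$. For the Nemytskii term, suppose $(x_n, u_n) \to (x, u)$ in $W_{1,p}^d(0,T) \times L_q^m(0,T)$. By Remark~\ref{Remark_EquivalentNorms}, $x_n \to x$ uniformly on $[0, T]$, so $\{ x_n \}$ is bounded in $L_{\infty}^d(0,T)$ by some $R$; passing to a subsequence we may further assume $u_n \to u$ a.e.\ on $(0, T)$, hence continuity of $f$ gives pointwise a.e. convergence of $f(x_n(\cdot), u_n(\cdot), \cdot)$ to $f(x(\cdot), u(\cdot), \cdot)$. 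The growth condition \eqref{DiffEqGrowthCond} combined with uniform integrability of $|u_n|^q$ in $L^1(0,T)$ (which follows from $u_n \to u$ in $L_q^m$) and $\omega_R \in L^p(0,T)$ supplies uniform integrability of $|f(x_n(\cdot), u_n(\cdot), \cdot)|^p$, so Vitali's convergence theorem yields $L_p^d$-convergence along the subsequence. A standard subsequence-of-subsequence argument then upgrades this to convergence of the full sequence, giving continuity of $F$.
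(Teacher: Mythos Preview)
Your proposal is correct in outline and matches the paper closely on sufficiency and on the continuity of $F$ (the paper also invokes Vitali's convergence theorem for $1 \le q < +\infty$ and a direct $\varepsilon$--$\delta$ argument for $q = +\infty$). The one place where you take a genuinely different route is the necessity direction. You propose the classical Krasnoselskii-style construction: negate the growth bound, extract witness sets, and glue to build a bad pair $(x,u)$. The paper instead bypasses this entirely by a reduction: it observes that the Nemytskii operator lands in $L_p^d(0,T)$ iff the functional $(x,u) \mapsto \int_0^T |f(x(t),u(t),t)|^p \, dt$ is finite on $L_\infty^d(0,T) \times L_q^m(0,T)$, and then simply cites \cite[Theorem~7.3]{FonsecaLeoni} (the same result behind Proposition~\ref{Prp_Functional_Correct}) applied to the integrand $|f|^p$, which immediately yields $|f(x,u,t)|^p \le C_R |u|^q + \omega_R(t)$ with $\omega_R \in L^1(0,T)$; taking $p$-th roots gives \eqref{DiffEqGrowthCond}. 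This reduction buys a much shorter argument and avoids the ``routine but delicate'' construction you flag; your direct approach would of course work too, but since the needed black box is already in the paper's bibliography and already used for Proposition~\ref{Prp_Functional_Correct}, the reduction is the natural choice here.
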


\begin{proof}
Let $1 \le q < + \infty$ (the validity of the statement in the case $q = + \infty$ follows directly from the continuity
of the function $f$). By definition the operator $(x(\cdot), u(\cdot)) \mapsto f(x(\cdot), u(\cdot), \cdot)$ 
maps $L_{\infty}^d (0, T) \times L_q^m(0, T)$ to $L_p^d(0, T)$ iff
$$
  \int_0^T |f(x(t), u(t), t)|^p \, dt < + \infty 
  \quad \forall (x, u) \in L_{\infty}^d (0, T) \times L_q^m (0, T).
$$
Hence by applying \cite[Theorem~7.3]{FonsecaLeoni} one obtains that this operator maps 
$L_{\infty}^d (0, T) \times L_q^m(0, T)$ to $L_p^d(0, T)$ iff for every $R > 0$ there exist $C_R > 0$ and 
an a.e. nonnegative function $\omega_R \in L^1(0, T)$ such that
$$
  |f(x, u, t)|^p \le C_R |u|^q + \omega_R(t)
$$
for a.e. $t \in [0, T]$ and for all $(x, u) \in \mathbb{R}^d \times \mathbb{R}^m$ with $|x| \le R$. If this inequality
is satisfied, then
$$
  |f(x, u, t)| \le \big( C_R |u|^q + \omega_R(t) \big)^{1 / p} 
  \le C_R^{1 / p} |u|^{\frac{q}{p}} + \omega_R^{\frac{1}{p}}(t),
$$
which implies the validity of the ``only if'' part of the proposition (note that $\omega_R^{1/p} \in L^p(0, T)$).
Conversely, if 
$$
  |f(x, u, t)| \le C_R |u|^{\frac{q}{p}} + \omega_R(t)
$$
for some $C_R > 0$ and $\omega_R \in L^p(0, T)$, then
$$
  |f(x, u, t)|^p \le \big( C_R |u|^{\frac{q}{p}} + \omega_R(t) \big)^p 
  \le 2^p \big( C_R^p |u|^q + \omega_R^p(t) \big)
$$
which implies the validity of the ``if'' part of the proposition.

Let one of the conditions be satisfied. From the fact that every $x \in W^d_{1, p}(0, T)$ belongs to 
$L_{\infty}^d (0, T)$ it follows that the operator $(x(\cdot), u(\cdot)) \mapsto f(x(\cdot), u(\cdot), \cdot)$ maps 
$W_{1, p}^d(0, T) \times L_q^m(0, T)$ to $L_p^d(0, T)$, and, therefore, so does the operator $F$. The continuity of this
operator can be easily verified in the case $1 \le q < + \infty$ with the use of Vitali's theorem characterising
convergence in $L^p$ spaces (cf. the proof of \cite[Theorem~5.1]{Precup}), and it can be proved via a simple
$\varepsilon$-$\delta$ argument in the case $q = + \infty$.
\end{proof}

\begin{proposition} \label{Prp_DiffEqConstr_Diferentiable}
Let $f = f(x, u, t)$ be continuous, differentiable in $x$, and let the function $\nabla_x f$ be continuous. Suppose also
that either $q = + \infty$ or inequality \eqref{DiffEqGrowthCond} holds true. Then 
the Nemytskii operator $G_u \colon L_{\infty}^d(0, T) \to L_p^d(0, T)$ defined as 
$G_u(x) = f(x(\cdot), u(\cdot), \cdot)$ is G\^{a}teaux differentiable at every point $x \in L_{\infty}^d(0, T)$, and its
G\^{a}teaux derivative has the form
\begin{equation} \label{NemytskiiOperator_Derivative}
  G_u'(x)[h] = \nabla_x f(x(\cdot), u(\cdot), \cdot) h(\cdot) \quad \forall h \in  L_{\infty}^d(0, T)
\end{equation}
for any $u \in L_q^m (0, T)$ if and only if one of the following conditions is satisfied
\begin{enumerate}
\item{$q = +\infty$;
}

\item{$1 \le q < + \infty$, and for any $R > 0$ there exist $C_R > 0$ and an a.e. nonnegative function
$\omega_R \in L^p(0, T)$ such that
\begin{equation} \label{DiffEqDerivatives_GrowthCond}
  |\nabla_x f(x, u, t)| \le C_R |u|^\frac{q}{p} + \omega_R(t), \quad
\end{equation}
for a.e. $t \in (0, T)$ and for all $(x, u) \in \mathbb{R}^d \times \mathbb{R}^m$ with $|x| \le R$ (i.e. $\nabla_x f$
satisfies the growth condition of order $(q / p, p)$).
}
\end{enumerate}
\end{proposition}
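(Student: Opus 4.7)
The plan is to follow the scheme of Proposition~\ref{Prp_DiffEqConstr_Correct} for the associated Nemytskii operator, combining the mean value theorem with Lebesgue's dominated convergence theorem for sufficiency, and inverting the argument together with Proposition~\ref{Prp_DiffEqConstr_Correct} applied to $\nabla_x f$ for necessity.

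For the sufficiency direction, fix $x, h \in L_{\infty}^d(0, T)$ and $u \in L_q^m(0, T)$, and set $R = \|x\|_{\infty} + \|h\|_{\infty}$. By the mean value theorem, for every $\alpha \in (0, 1]$ and a.e. $t \in (0, T)$ there exists $\alpha(t) \in (0, \alpha)$ satisfying
\[
\frac{1}{\alpha}\bigl( f(x(t) + \alpha h(t), u(t), t) - f(x(t), u(t), t) \bigr) = \nabla_x f(x(t) + \alpha(t) h(t), u(t), t) h(t).
\]
By continuity of $\nabla_x f$, the right-hand side converges a.e. to $\nabla_x f(x(t), u(t), t) h(t)$ as $\alpha \to +0$, and the assumed growth condition~\eqref{DiffEqDerivatives_GrowthCond} (or simply the continuity of $\nabla_x f$ in the case $q = +\infty$) provides a majorant of the form $\bigl( C_R |u(t)|^{q/p} + \omega_R(t) \bigr) \|h\|_{\infty}$, which lies in $L^p(0, T)$ because $u \in L_q^m(0, T)$ and $\omega_R \in L^p(0, T)$. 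Lebesgue's dominated convergence theorem then yields convergence of the difference quotients in $L_p^d(0, T)$, proving the G\^ateaux differentiability of $G_u$ at $x$ with derivative~\eqref{NemytskiiOperator_Derivative}.

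For the necessity direction, assume that the G\^ateaux derivative of $G_u$ has the form~\eqref{NemytskiiOperator_Derivative} for every $u \in L_q^m(0, T)$ and every $x \in L_{\infty}^d(0, T)$. Then the linear map $h \mapsto \nabla_x f(x(\cdot), u(\cdot), \cdot) h(\cdot)$ must send $L_{\infty}^d(0, T)$ into $L_p^d(0, T)$. Testing against the constant functions $h(\cdot) \equiv e_i$ (the standard basis vectors of $\mathbb{R}^d$) shows that every column of $\nabla_x f(x(\cdot), u(\cdot), \cdot)$ lies in $L^p(0, T)$ for all admissible $(x, u)$. Consequently the Nemytskii operator associated with $\nabla_x f$ maps $L_{\infty}^d(0, T) \times L_q^m(0, T)$ into $L_p^{d \times d}(0, T)$, and invoking Proposition~\ref{Prp_DiffEqConstr_Correct} (equivalently \cite[Theorem~7.3]{FonsecaLeoni}) applied componentwise to $\nabla_x f$ produces exactly the required growth condition~\eqref{DiffEqDerivatives_GrowthCond}.

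The main point to watch is the matching of the exponent $q/p$ in the growth condition with the $L^p$-integrability required in the codomain. The identity that $|u|^{q/p} \in L^p$ for $u \in L^q$ is precisely what makes the dominating function sit in $L^p(0, T)$ in the sufficiency argument, and it is also exactly what Proposition~\ref{Prp_DiffEqConstr_Correct} extracts in the necessity direction; no weaker growth rate on $\nabla_x f$ could simultaneously guarantee that~\eqref{NemytskiiOperator_Derivative} defines an $L_p^d$-valued derivative for every admissible $u$.
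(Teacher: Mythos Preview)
Your argument is correct and follows essentially the same route as the paper: the mean value theorem combined with Lebesgue's dominated convergence theorem for sufficiency, and for necessity the observation that the derivative formula forces the Nemytskii operator of $\nabla_x f$ to map into $L_p^{d\times d}(0,T)$, after which Proposition~\ref{Prp_DiffEqConstr_Correct} (i.e. \cite[Theorem~7.3]{FonsecaLeoni}) yields the growth condition. Your version is in fact more explicit than the paper's, which merely sketches both directions in two sentences; the testing against constant basis vectors $h\equiv e_i$ is a clean way to spell out what the paper leaves as ``as it is easily seen.''
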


\begin{proof}
If one the conditions is satisfied, then by applying Lebesgue's dominated convergence theorem one can easily check that
$G_u(x)$ is G\^{a}teaux differentiable, and \eqref{NemytskiiOperator_Derivative} holds true. Conversely, if $G_u(x)$
is G\^{a}teaux differentiable, and \eqref{NemytskiiOperator_Derivative} holds true, then, as it is easily seen, 
the operator $(x, u) \mapsto \nabla_x f(x(\cdot), u(\cdot), \cdot)$ maps $L_{\infty}^d(0, T) \times L_q^m(0, T)$ to
$L_p^{d \times d} (0, T)$. Hence arguing in the same way as in the proof of Proposition~\ref{Prp_DiffEqConstr_Correct}
one obtains that inequality \eqref{DiffEqDerivatives_GrowthCond} is valid.
\end{proof}

\begin{remark}
Arguing in a similar way to the proof of Proposition~\ref{Prp_DiffEqConstr_Diferentiable} one can check that the
functional $\mathcal{I}(x, u)$ is G\^{a}teaux differentiable at every point 
$(x, u) \in L_{\infty}^d(0, T) \times L_q^m(0, T)$ and its G\^{a}teaux derivative has the natural form
\eqref{IntegralFunctional_GateauxDeriv} iff one of the two conditions of Proposition~\ref{Prp_LipschitzConditions} are
satisfied. Therefore, if $\theta(x, u, t)$ is continuous, differentiable in $x$ and $u$, and the functions 
$\nabla_x \theta$ and $\nabla_u \theta$ are continuous, then the functional $\mathcal{I}(x, u)$ is Lipschitz continuous
on bounded subsets of the space $L_{\infty}^d(0, T) \times L_q^m(0, T)$ iff it is G\^{a}teaux differentiable at every
point of this space, and its G\^{a}teaux derivative has the natural form \eqref{IntegralFunctional_GateauxDeriv}.
\end{remark}

Let us also prove a simple auxiliary result on the resolvent of a Volterra-type integral equation. This result is
well-known. Nevertheless, we briefly outline its proof for the sake of completeness, and because of the fact that 
the equation that we analyse slightly differs from the classical one \cite{Gripenberg} (instead of integrating from $0$
to $t$ we integrate from $t$ to $T$). 

Denote by $I$ the identity operator, and define $(\mathcal{K}_y x)(t) = \int_t^T y(t, s) x(s) \, ds$ for a.e. 
$t \in (0, T)$, where $x \colon (0, T) \to \mathbb{R}^d$ and $y \colon (0, T)^2 \to \mathbb{R}^{d \times d}$ are
measurable functions such that $y(t, \cdot) x(\cdot) \in L_1^d(0, T)$ for a.e. $t \in (0, T)$.

\begin{lemma} \label{Lemma_BoundedResolvent}
Let a function $y \colon (0, T)^2 \to \mathbb{R}^{d \times d}$ be measurable and satisfy the inequality 
$|y(t, s)| \le y_0(s)$ for a.e. $t, s \in (0, T)$ and for some a.e. nonnegative function $y_0 \in L^{p'}(0, T)$. Then
the operator $I - \mathcal{K}_y$ maps $L_p^d (0, T)$ to $L_p^d (0, T)$ and is invertible. Furthermore, there exists a
continuous function $\omega \colon [0, + \infty) \times [0, + \infty) \to [0, +\infty)$ such that 
$\| (I - \mathcal{K}_y)^{-1} \| \le \omega(T, \| y_0 \|_{p'})$.
\end{lemma}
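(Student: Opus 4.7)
The plan is to establish invertibility via the Neumann series $(I - \mathcal{K}_y)^{-1} = \sum_{n \ge 0} \mathcal{K}_y^n$ and to read off the claimed continuous bound $\omega(T, \|y_0\|_{p'})$ from an explicit estimate on its partial sums. The key structural fact to exploit is that $\mathcal{K}_y$ is of Volterra type (integration over $[t,T]$), so its iterated kernels decay factorially and the series converges in operator norm irrespective of the size of $\|\mathcal{K}_y\|$.

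First I would verify that $\mathcal{K}_y$ maps $L_p^d(0,T)$ into itself. From $|y(t,s)| \le y_0(s)$ and H\"older's inequality one gets the pointwise bound $|(\mathcal{K}_y x)(t)| \le \int_t^T y_0(s)|x(s)|\,ds \le \|y_0\|_{p'}\|x\|_p$; together with measurability of $\mathcal{K}_y x$ (obtained from Fubini applied to the measurable function $y$) this even gives $\mathcal{K}_y x \in L_\infty^d(0,T)$ and $\|\mathcal{K}_y x\|_p \le T^{1/p} \|y_0\|_{p'} \|x\|_p$.

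The crux is the pointwise comparison $|(\mathcal{K}_y^n x)(t)| \le (\mathcal{T}^n |x|)(t)$, where $\mathcal{T}$ is the associated scalar Volterra operator $(\mathcal{T} h)(t) = \int_t^T y_0(s) h(s)\,ds$; this follows by induction from $|y(t,s)| \le y_0(s)$. I would then prove the classical iterated-kernel identity
$$
  (\mathcal{T}^n h)(t) = \int_t^T y_0(s) h(s) \frac{[\Psi(t) - \Psi(s)]^{n-1}}{(n-1)!}\,ds, \qquad \Psi(t) := \int_t^T y_0(\tau)\,d\tau,
$$
by induction on $n$, exchanging the order of integration via Fubini and computing the inner integral through the substitution $u = \Psi(s_1) - \Psi(s)$ (here $y_0 \in L^1(0,T)$ since $L^{p'}(0,T) \subset L^1(0,T)$, so $\Psi$ is well-defined and absolutely continuous). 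Because $\Psi$ is nonincreasing with $\Psi(0) \le T^{1/p}\|y_0\|_{p'}$ (H\"older again), we obtain
$$
  |(\mathcal{K}_y^n x)(t)| \le \frac{\Psi(0)^{n-1}}{(n-1)!}(\mathcal{T}|x|)(t) \le \frac{\Psi(0)^{n-1}}{(n-1)!}\|y_0\|_{p'}\|x\|_p
$$
and hence $\|\mathcal{K}_y^n\|_{L_p \to L_p} \le T^{1/p}\|y_0\|_{p'} \Psi(0)^{n-1}/(n-1)!$ for $n \ge 1$.

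Summing the resulting exponential series gives $\sum_{n \ge 0} \|\mathcal{K}_y^n\| \le 1 + T^{1/p}\|y_0\|_{p'} \exp\!\bigl(T^{1/p}\|y_0\|_{p'}\bigr)$, which shows that the Neumann series converges in operator norm, $I - \mathcal{K}_y$ is invertible on $L_p^d(0,T)$, and the right-hand side is the continuous function $\omega(T, \|y_0\|_{p'})$ required by the statement. The main obstacle, modest as it is, lies in setting up the iterated-kernel identity cleanly: justifying the Fubini swap and carrying through the substitution to extract the factorial $(n-1)!$. Once that is in place, the remainder is just H\"older and the exponential power series.
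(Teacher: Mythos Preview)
Your proposal is correct and follows the same overall strategy as the paper: show $\mathcal{K}_y$ is bounded on $L_p^d(0,T)$, exploit the Volterra structure to get factorial decay of the iterates, and sum the Neumann series to obtain both invertibility and the continuous bound $\omega$. The difference lies only in the bookkeeping for the iterate estimate. The paper iterates H\"older's inequality directly, obtaining $|(\mathcal{K}_y^n x)(t)| \le \|y_0\|_{p'}^n \|x\|_p \bigl((n-1)!^{-1}|T-t|^{n-1}\bigr)^{1/p}$ and hence the bound $\omega(\tau,\xi) = \sum_{n \ge 0} (\tau^n/n!)^{1/p}\xi^n$; you instead pass through the scalar majorant $\mathcal{T}$ and compute its iterated kernel exactly via the primitive $\Psi$, which yields the cleaner closed form $\omega(\tau,\xi) = 1 + \tau^{1/p}\xi\,\exp(\tau^{1/p}\xi)$. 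Your route is marginally more transparent and produces an elementary $\omega$, while the paper's argument avoids the Fubini/substitution step at the cost of a less explicit bound; either way the essential mechanism is identical.
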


\begin{proof}
Fix a measurable function $y(t, s)$ satisfying the assumptions of the lemma for some $y_0 \in L^{p'}(0, T)$. By applying
H\"{o}lder's inequality one can easily check that the operator $\mathcal{K}_y$ maps $L_p^d (0, T)$ to $L_p^d (0, T)$,
and $|(\mathcal{K}_y x)(t)| \le \| y_0 \|_{p'} \| x \|_p$ for a.e. $t \in (0, T)$. 

It is well-known and easy to check that if the Neumann series $\sum_{n = 0}^{\infty} \mathcal{K}_y^n$ converges in the
operator norm, then its limit is the inverse of $I - \mathcal{K}_y$. Let us check the convergence of this series.
Indeed, with the use of H\"{o}lder's inequality one gets that
$$
  \big| (\mathcal{K}_y^2 x) (t) \big| = \bigg| \int_t^T y(t, s) (\mathcal{K}_y x)(s) \, ds \bigg|
  \le \| y_0 \|_{p'} \| x \|_p \int_t^T |y_0(s)| \, ds 
  \le \| y_0 \|_{p'}^2 \| x \|_p |T - t|^{\frac{1}{p}}
$$
for a.e. $t \in (0, T)$. Similarly, one has
$$
  \big| (\mathcal{K}_y^3 x) (t) \big| = \bigg| \int_t^T y(t, s) (\mathcal{K}_y^2 x)(s) \, ds \bigg|
  \le \| y_0 \|_{p'}^2 \| x \|_p \int_t^T |y_0(s)| |T - s|^{\frac{1}{p}} \, ds 
  \le \| y_0 \|_{p'}^3 \| x \|_p \left( \frac{1}{2} |T - t|^2 \right)^{\frac{1}{p}}
$$
for a.e. $t \in (0, T)$. By induction one can easily check that
$$
  \big| (\mathcal{K}_y^n x) (t) \big| 
  \le \| y_0 \|_{p'}^n \| x \|_p \left( \frac{1}{(n - 1)!} |T - t|^{n - 1} \right)^{\frac{1}{p}}
$$
for a.e. $t \in (0, T)$ and for any $n \in \mathbb{N}$. Therefore
$$
  \big\| \mathcal{K}_y^n x \big\|_p \le \left( \frac{1}{n!} T^n \right)^{\frac{1}{p}} \| y_0 \|_{p'}^n \| x \|_p, \quad
  \| \mathcal{K}_y^n \| \le \left( \frac{1}{n!} T^n \right)^{\frac{1}{p}} \| y_0 \|_{p'}^n
$$
for all $n \in \mathbb{N}$. Consequently, the Neumann series $\sum_{n = 0}^{\infty} \mathcal{K}^n_y$ converges, and the
norm of its limit does not exceed $\omega(T, \| y_0 \|_{p'})$, where
$$
  \omega(\tau, \xi) = \sum_{n = 0}^{\infty} \left( \frac{1}{n!} \tau^{n} \right)^{\frac{1}{p}} \xi^n.
$$
It remains to note that the series in the definition of $\omega$ converges to a continuous function uniformly on bounded
sets by the Weierstrass M-test.
\end{proof}

\end{document}